\DeclareMathOperator{\Aut}{Aut}
\DeclareMathOperator{\rep}{rep}
\DeclareMathOperator{\Rep}{Rep}
\DeclareMathOperator{\PSL}{PSL}
\DeclareMathOperator{\PGaL}{P\Gamma L}
\DeclareMathOperator{\Form}{Form}
\DeclareMathOperator{\soc}{soc}
\DeclareMathOperator{\Inn}{Inn}
\DeclareMathOperator{\Prod}{Prod}
\DeclareMathOperator{\Diag}{Diag}
\DeclareMathOperator{\All}{All}
\DeclareMathOperator{\case1}{{\sf{Case\,1}}}
\DeclareMathOperator{\taucase}{{\sf{Case\,2}}}
\DeclareMathOperator{\Sym}{Sym}
\newcommand{\bs}{\boldsymbol}
\newtheorem{theorem}{Theorem}[section]
\newtheorem{proposition}[theorem]{Proposition}
\newtheorem{lemma}[theorem]{Lemma}
\newtheorem{corollary}[theorem]{Corollary}
\newtheorem{notation}[theorem]{Notation}
\theoremstyle{definition}
\newtheorem{remark}[theorem]{Remark}
\newtheorem{definition}[theorem]{Definition}
\newtheorem{example}[theorem]{Example}
\renewcommand{\wr}{\,{\sf wr}\,}
\renewcommand{\leq}{\leqslant}
\renewcommand{\geq}{\geqslant}
\numberwithin{equation}{section}
\begin{document}
\title[Characterisation of a family of neighbour transitive codes] 
      {Characterisation of a family of neighbour transitive codes} 
\author{Neil I. Gillespie and Cheryl E. Praeger}
\address{[Gillespie] Heilbronn Institute for Mathematical Research, School of Mathematics,
Howard House, The University of Bristol, Bristol, BS8 1SN, United Kingdom.}
\email{neil.gillespie@bristol.ac.uk}
\address{[Praeger] Centre for the Mathematics of Symmetry and Computation
School of Mathematics and Statistics
The University of Western Australia
35 Stirling Highway, Crawley, Western Australia, 6009.
Also affiliated with King Abdulaziz University, Jeddah, Saudi Arabia.}
\email{cheryl.praeger@uwa.edu.au}

\begin{abstract}  
We consider codes of length $m$ over an alphabet of size $q$ as subsets of the vertex set of the Hamming graph $\Gamma=H(m,q)$.  
A code for which there exists an automorphism group $X\leq\Aut(\Gamma)$ that acts transitively on the code and 
on its set of neighbours is said to be \emph{neighbour transitive}, and were introduced 
by the authors as a group theoretic analogue to the assumption that single errors are 
equally likely over a noisy channel. Examples of neighbour transitive codes include the Hamming codes, various Golay codes, 
certain Hadamard codes, the Nordstrom Robinson codes, certain permutation codes and frequency permutation arrays, which have connections with
powerline communication, and also completely transitive codes, a subfamily of
completely regular codes, which themselves have attracted a lot of interest. It is known that for any neighbour transitive code 
with minimum distance at least 3 there exists a subgroup of $X$ that has a $2$-transitive
action on the alphabet over which the code is defined. Therefore, by Burnside's theorem, this action is of almost simple or affine type.
If the action is of almost simple type, we say the code is \emph{alphabet almost simple neighbour transitive}.
In this paper we characterise a family of neighbour transitive codes, in particular, the alphabet almost simple neighbour
transitive codes with minimum distance at least $3$, and for which the group $X$ has a non-trivial intersection with
the base group of $\Aut(\Gamma)$. If $C$ is such a code, we show that, up to equivalence, there exists a subcode $\Delta$ that 
can be completely described, and that either $C=\Delta$, or $\Delta$ is a neighbour transitive frequency permutation array and
$C$ is the disjoint union of $X$-translates of $\Delta$.

We also prove that any finite group can be identified in a natural way with a neighbour transitive code.
\end{abstract}

\thanks{{\it Date:} draft typeset \today\\
{\it 2010 Mathematics Subject Classification:} 94B05, 05E18, 20B20\\
{\it Key words and phrases: neighbour transitive codes, completely transitive codes, frequency permutation arrays, $2$-transitive groups,
finite groups, powerline communication.} }

\maketitle
\section{Introduction}

The Hamming graph $\Gamma=H(m,q)$ has vertex set $V(\Gamma)$ consisting of all $m$-tuples over an
alphabet $Q$ of size $q$ with an edge between two vertices if and only if they differ in exactly one entry.  
The automorphism group of $\Gamma$ is the semi-direct product 
$\Aut(\Gamma)=B\rtimes L$, where $B\cong S_q^m$ and $L\cong S_m$. Any block code $C$
of length $m$ over $Q$ can be embedded as a subset of $V(\Gamma)$.  Two codes in $\Gamma$ are
\emph{equivalent} if there exists an automorphism of $\Aut(\Gamma)$ that maps one to the other.  The 
distance between two vertices in $\Gamma$ is the number of entries in which they differ, and the \emph{minimum 
distance} of $C$ is the minimum of all distances between distinct codewords of $C$.

A \emph{neighbour of $C$} is a vertex that is adjacent to some codeword but is not a codeword itself. We 
denote the set of neighbours of $C$ by $C_1$.  A code $C$ is \emph{$X$-neighbour transitive}, or simply 
\emph{neighbour transitive}, if there exists an $X\leq\Aut(\Gamma)$ such that both $C$ and $C_1$ are $X$-
orbits. If $C$ is an $X$-neighbour transitive code with minimum distance $\delta\geq 3$, 
then there exists a subgroup of $X$ that has a $2$-transitive action on $Q$ \cite[Prop. 2.9]{comtran}, which is 
therefore either of affine type or almost simple type.  If this action is of almost simple type, we say 
$C$ is \emph{alphabet almost simple} $X$-neighbour transitive.  A \emph{frequency permutation array} is a 
code $C$ in $H(m,q)$ where $m=pq$ for some positive integer $p$ such that in each codeword of $C$ every 
letter in the alphabet $Q$ appears exactly $p$ times.  In this paper we characterise the following family 
of neighbour transitive codes.  

\begin{theorem}\label{main} Let $C$ be an alphabet almost simple $X$-neighbour transitive 
code with $K:=X\cap B\neq 1$ and minimum
distance $\delta\geq 3$.  Also let $\soc(K)$ denote the socle of $K$, the group generated by the minimal normal subgroups
of $K$.  Then, up to equivalence, there exists a $\soc(K)$-orbit $\Delta$ of $C$ that is explicitly known, and is described 
in \eqref{des1}, \eqref{des2}, or \eqref{des3}.
Moreover, either $\Delta=C$, or $\Delta$ is a 
neighbour transitive frequency permutation array and $C$ is the disjoint union of $X$-translates of $\Delta$.  
\end{theorem}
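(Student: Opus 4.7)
The plan is to exploit the fact that $K=X\cap B$ is normal in $X$, since $B\trianglelefteq\Aut(\Gamma)$, and therefore $\soc(K)$, being characteristic in $K$, is also normal in $X$. This means the $\soc(K)$-orbits on $C$ are permuted by $X$, and because $X$ is transitive on $C$, all these orbits are $X$-conjugate and of equal size. Selecting any one of them to be $\Delta$ therefore gives the disjoint-union description $C=\bigsqcup_x \Delta\cdot x$ immediately, once $\Delta\neq C$; the alternative $\Delta=C$ occurs exactly when $\soc(K)$ itself is transitive on $C$.

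The substantive part is to pin down the structure of $\soc(K)$ and the orbit $\Delta$ through a fixed codeword $\alpha$. I would analyse $K$ coordinate-by-coordinate through the projections $\pi_i\colon B\to S_q$. Let $T\leq X$ be the subgroup guaranteed by \cite[Prop.~2.9]{comtran} that acts $2$-transitively and almost simply on $Q$, with simple socle $N:=\soc(T)$. Since $T$ normalises $K$ and the quotient $X/K$ embeds into $L\cong S_m$, the support $S:=\{i:\pi_i(K)\neq 1\}$ is an $X$-invariant subset of coordinates, and on each such coordinate the projection $\pi_i(K)$ must be normalised by $\pi_i(T)$, forcing $\pi_i(K)$ to contain $N$. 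Combined with the action of $X/K$ permuting $S$ transitively (possibly after replacing $X$ by an appropriate conjugate and restricting to an orbit), one identifies $\soc(K)$ with a direct product of copies of $N$ indexed by $S$, possibly twisted by a diagonal embedding. The three descriptions \eqref{des1}, \eqref{des2}, \eqref{des3} should correspond respectively to the cases where $\soc(K)$ is a full direct product on $S$, a diagonal subgroup arising from $X$-orbits of blocks in $S$, and an intermediate subdirect product.

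With $\soc(K)$ identified, the orbit $\Delta=\alpha^{\soc(K)}$ is explicit: on coordinates outside $S$ it is constant, while on $S$ it is the $\soc(K)$-orbit of the restriction of $\alpha$ to $S$. The frequency permutation array property then follows because $N$ is transitive on $Q$ and acts on each coordinate of $S$, so every letter of $Q$ appears equally often in each codeword of $\Delta$ restricted to $S$. Neighbour transitivity of $\Delta$ follows from observing that the setwise stabiliser $\stab{X}{\Delta}$ contains $\soc(K)$ together with a preimage of the appropriate coordinate-permuting subgroup, and that the $2$-transitivity of $T$ on $Q$ together with transitivity on $S$ handles both the alphabet change and the coordinate choice in a neighbour. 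The main obstacle I anticipate is the case analysis yielding the three explicit descriptions: one must carefully reconcile the $X$-orbit structure on coordinates with the diagonal possibilities for a product of simple groups, and rule out pathological subdirect embeddings using minimum distance $\delta\geq 3$, which forbids $\soc(K)$ from having small support in a way that would create codewords at distance less than $3$.
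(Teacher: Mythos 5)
Your opening reduction is sound and matches the paper: $\soc(K)$ is characteristic in $K=X\cap B\unlhd X$, hence normal in $X$, so its orbits on $C$ are blocks of imprimitivity and the ``either $\Delta=C$ or $C$ is a disjoint union of $X$-translates'' dichotomy is immediate. Your coordinate-wise analysis showing $\varphi_i(K)\supseteq T$ is also essentially the paper's Proposition \ref{subdir} (modulo the technical reduction, via the embedding theorem of Praeger--Schneider, allowing one to assume $X\leq X_1^Q\wr L$ and $K\leq Y^m$). However, your identification of the trichotomy \eqref{des1}/\eqref{des2}/\eqref{des3} with ``full direct product / diagonal / intermediate subdirect product'' is wrong. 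Scott's Lemma leaves no room for an intermediate case: a subdirect subgroup of $T^m$ with $T$ non-abelian simple is \emph{always} a direct product $D_1\times\cdots\times D_\ell$ of full diagonal subgroups over a partition $\mathcal{J}$ of $M$ (which is $X$-invariant by Lemma \ref{ninva}). The actual trichotomy is: (a) the diagonals are untwisted and each projected code $\pi_J(C)$ is the repetition code, giving \eqref{des1} with $\Delta=C$; (b) the diagonals are untwisted and each $\pi_J(C)\subseteq\All(pq,q)$, giving \eqref{des2}; (c) the diagonals are twisted by an automorphism $\tau\in\Aut(T)\setminus\overline{N_{S_q}(T)}$, which by the classification of $2$-transitive groups forces $T$ into Table \ref{table1} and gives \eqref{des3}. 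Your proposal does not engage with the twisted case at all beyond the phrase ``possibly twisted by a diagonal embedding''.

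The more serious gap is that knowing the group $\soc(K)$ does not by itself determine the orbit $\Delta=\alpha^{\soc(K)}$: you must also know the shape of $\alpha$ restricted to each block $J\in\mathcal{J}$, i.e.\ whether $\alpha$ is constant on $J$ or contains every letter of $Q$ equally often on $J$. The paper obtains this by proving that $\pi_J(C)$ is a \emph{diagonally} neighbour transitive code (its automorphism group normalises $\Diag_{|J|}(T)$, hence lies in $\Diag_{|J|}(N_{S_q}(T))\rtimes S_{|J|}$) and then invoking the classification of diagonally neighbour transitive codes from \cite{diagnt}, with minimum-distance arguments eliminating two of the four possible outcomes. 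Your substitute argument --- that the frequency permutation array property ``follows because $N$ is transitive on $Q$ and acts on each coordinate'' --- is invalid: the orbit of the constant vertex $(a,\ldots,a)$ under $\Diag_m(N)$ is the repetition code, in which each codeword contains only one letter. Indeed this is exactly case \eqref{des1}, where $\Delta$ is \emph{not} a frequency permutation array and the theorem instead asserts $\Delta=C$; distinguishing this case from \eqref{des2} is precisely what the classification of diagonally neighbour transitive codes is needed for, and your proposal has no mechanism to make that distinction.
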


This main result, stated in slightly different terms, can be found in the first author's Ph.D thesis \cite[Thm. 6.2]{ngthesis}. 
In Section \ref{secdefprelim} we introduce the necessary definitions along with some preliminary results.  
Then we give various constructions of neighbour transitive codes in Section \ref{secconstr}.
These include the Product and Repetition constructions (Section \ref{secprodrep}) and
the Projection codes (Section \ref{projsec}).  In the next section we describe some concrete examples of neighbour
transitive codes, and in particular, we prove Theorem \ref{fgntthm} (see Section \ref{fgsec}), which 
identifies any finite group with a neighbour transitive code.
We then turn our attention to proving Theorem \ref{main} for an alphabet almost simple $X$-neighbour transitive
code $C$.  First in Section \ref{secstruct} we prove that, up to equivalence, $\soc(K)$ is a sub-direct product of a direct product of non-abelian 
simple groups (Proposition \ref{subdir}).
This allows us to apply Scott's Lemma \cite{scotts}, and in turn, determine the structure of $\soc(K)$.  Using this, in the following section 
we consider certain Projection codes of $C$, and use the classification of diagonally neighbour transitive codes  \cite{diagnt} (see Definition \ref{ntrans}) to
describe the codes that can appear.  This allows us in Section \ref{secbuildingblocks} to describe certain $\soc(K)$-orbits in $C$. 
We prove Theorem~\ref{main} in Section \ref{secproofofthm}. In the final section we given an example of a code that satisfies the conditions
of Theorem \ref{main}, but with the additional property that the projected code has a minimum distance strictly less the minimum distance of the original code.  
In the remainder of this section we discuss the context of our investigation, in particular how it 
relates to earlier studies of completely regular codes, and to recent work on codes suitable for powerline communication.

\subsection{The assumption that errors are equally likely}\label{secerrors}
An ideal decoding decision scheme for communicating across a noisy channel can depend on the 
probability characteristics of the input \cite[p.93]{roman}.  To combat this, it is often assumed that
the input distribution is uniform, that is, each codeword has an equal probability of being sent 
\cite[p.10]{lincostello}.  
In this circumstance, maximum likelihood decision schemes are suitable 
\cite[p.95]{roman}; given an output message $\bf{y}$, the codeword $\bf{x}$ is chosen that maximises 
$p(\bf{y}|\bf{x})$, the probability that $\bf{y}$ was received given that $\bf{x}$ was sent. 
A more descriptive decision scheme is \emph{nearest neighbour decoding} (also known as \emph{minimum
distance decoding}); given a received message $\bf{y}$, a minimum distance decoder will decode to the 
codeword $\bf{x}$ that is closest to $\bf{y}$ with respect to the distance metric inherent in the Hamming 
graph. \emph{Syndrome decoding} used for linear codes is essentially nearest neighbour
decoding, but the algebraic structure of the code allows for any received vertex to be checked against a 
shortened list of codewords \cite[Sec. 1.11]{huffpless}.  

If the probability of an error occurring during transmission is independent of the symbol sent, and also, 
in the case where an error occurs, each of the other $q-1$ symbols has an equal chance of appearing, 
then maximum likelihood decoding is equivalent to nearest neighbour 
decoding \cite[p.130] {roman}, \cite[p.5]{hill}. Thus, if we are transmitting across a discrete 
memoryless channel (the case for which Shannon's Theorem \cite{shannon} was originally proved), assuming 
that single errors are equally likely allows one to use nearest neighbour decoding to obtain maximum 
likelihood decoding.  Hence, in coding theory it is often assumed that the probability of each error occurring is independent of
both the position in which the error occurs, and the symbol appearing in error \cite[p.4]{peterson}, \cite[p.122]{roman}, \cite[p.5]{pless}.  
The authors introduced the property of neighbour transitivity as a group theoretic analogue to the assumption that single errors are equally likely \cite{ntransintro},
and also characterised $X$-neighbour transitive codes for certain groups $X$ \cite{diagnt}.
This paper further contributes to the problem of characterising neighbour transitive codes in general.  

\subsection{Some historical remarks} Ever since Shannon's seminal 1948 paper, coding theorists have been interested in codes that are highly
structured and symmetrical, the hope being that codes with these properties will have good error
correcting capabilities, and at the same time, can be efficiently decoded.

One of the first families of codes that coding theorists studied were \emph{perfect codes}; a code with minimum distance $\delta$ is 
\emph{perfect} if the spheres of radius $e=\lfloor\frac{\delta-1}{2}\rfloor$ centred on the 
codewords cover the vertices of the Hamming graph.  Trivial examples of perfect codes are the code
containing just one codeword, the whole space, and the binary
\emph{repetition code} where $m$ is odd (see Example \ref{repex}). Non-trivial perfect codes include the Hamming codes and the perfect Golay codes.  
Building on work by van Lint, Tiet{\"a}v{\"a}inen \cite{tiet} proved
that any non-trivial perfect code over a finite field has the same parameters as either a Hamming code
or one of the perfect Golay codes.

Once the parameters of perfect codes had been classified, coding theorists began examining 
other families of codes with large amounts of structure, including nearly perfect and uniformly packed codes.
For uniformly packed codes, the spheres of radius $e+1$ around codewords cover the full space, but overlap
in a regular way.  That is, a code is \emph{uniformly packed} if vertices that are at distance $e$ from some codeword are
in $\lambda+1$ spheres, and vertices that are at distance $e+1$ or more from every codeword
are in $\mu$ spheres (where $\lambda$ and $\mu$ are constants and $\lambda<(m-e)(q-1)/(e+1)$).
If $\lambda=\lfloor(m-e)(q-1)/(e+1)\rfloor$ and $\mu=\lfloor m(q-1)/(e+1)\rfloor$, the code is \emph{nearly perfect}.  

Lindstr{\"o}m \cite{lind1} classified the parameters of binary nearly perfect codes; the parameters are those of either 
the punctured Preparata code, or the code constructed by 
puncturing the codewords of even weight in the binary Hamming code.  He also showed that nearly perfect
codes over non-binary finite fields are necessarily perfect \cite{lind2}.  
In his thesis, van Tilborg \cite{vantil} proved the non-existence of uniformly packed codes with $e\geq 4$, and that 
the extended binary Golay code is the only
binary uniformly packed code with $e=3$.  He also 
classified binary uniformly packed codes with $\lambda$ and
$\mu$ such that $\mu-\lambda=1$ (such codes are called \emph{strongly
  uniformly packed}).  A classification of binary linear uniformly
packed codes with $e=2$ was given by Calderbank and Goethals
\cite{cald1}, with one outstanding case being dealt with by Calderbank
\cite{cald2}.

In 1973 Delsarte \cite{delsarte} introduced \emph{completely regular codes}, a family of codes with a high 
degree of combinatorial symmetry (see Definition \ref{sregdef}).
Delsarte showed that perfect codes are completely regular, and it also
holds that uniformly packed codes are completely regular, see for
example \cite{sole}.  Further examples of completely regular codes 
are the Preparata codes, the Kasami codes,
various codes constructed from the Golay codes, and the Hadamard $12$
and punctured Hadamard $12$ code. However, a belief began to emerge amongst coding
theorists that completely regular codes are actually quite rare \cite{martin}.
Indeed, in his 1990 paper, Neumaier \cite{neu} conjectured that
the only completely regular codes with minimum distance at least $8$ are the binary
repetition code and the extended binary Golay code.  
Surprisingly Borges, Rif{\`a} and Zinoviev \cite{conjecex} found a counter 
example to this conjecture, 
and have since written a series of papers classifying various families and finding new examples
of completely regular codes \cite{rho=2,binctrarb,kronprod,liftperfect,extendconst}.  In particular, many of their examples are also 
\emph{completely transitive} (see Definition \ref{ntrans}), a family of completely regular
codes that are very symmetric from an algebraic viewpoint.  Currently the classification
of completely regular codes is an open problem.  A result of Brouwer et al. \cite[p.353]{distreg} shows
that certain families of distance-regular graphs are coset graphs of additive completely regular codes, and so 
a classification of these codes may be of interest to graph theorists as well as coding theorists.  
With this in mind, a classification of completely transitive codes seems valuable, and as such the authors 
have classified all $X$-completely transitive codes with $K:=X\cap B=1$ and minimum distance at least 5 \cite{comtran}.  The case $K\neq 1$ is more difficult, 
however.  As completely transitive codes are necessarily neighbour transitive, any characterisation
of neighbour transitive codes is certainly useful in the context of classifying completely transitive codes.  

\begin{remark} Many of the classifications mentioned above only hold for $q$-ary codes with $q$ a prime power.  
In particular, if $q$ is not a prime power, not even perfect codes have been classified.  There are some results though.  For example, 
Best \cite{best} showed that a perfect code over a non-prime power alphabet must have error correcting capability $e=1,2,6,$ or $8$.
We note that the classification of completely transitive codes with $K=1$ and $\delta\geq 5$ is over any alphabet size.
\end{remark}

\subsection{Connections with powerline communication}\label{powerline}
Powerline communication has been proposed as a possible solution to the 
``last mile problem" in the delivery of telecommunications \cite{hanvinck,stateoftheart}.  
By allowing the frequency at which 
electricity is transmitted over powerlines to vary, 
say $q$ distinct frequencies, an alphabet is generated over which 
information can be encoded \cite{chu3,ferr}.  However, it is likely that 
the power output will not be constant if an arbitrary code is used, interfering
with the primary purpose of the electrical infrastructure.  There are also additional
types of noise that need to be considered for powerline communication.  As well as the usual 
background noise, there is a permanent narrow band noise present generated by electrical equipment
and a short term impulse noise that affects many frequencies over a short period of time~\cite{chu2}.

Constant composition codes have been suggested as suitable coding schemes to deal with
the extra noise considerations present in powerline communication
while at the same time providing a constant power output \cite{chu1,chu2,chu3}.
These are $q$-ary codes with the property that the number of occurrences of each symbol within a codeword
is the same for each codeword. Frequency permutation arrays are a class of constant composition
codes that are ideal, in some sense, for powerline communication \cite{chu2}.  The most extensively
studied frequency permutation arrays (indeed constant composition codes) are 
permutation codes, see for example \cite{bailey,blake,blakeetal,chu1,chu3,frankl,keevash,smith2,slepian}.

It follows from Theorem \ref{main} that frequency permutation arrays arise naturally as the building 
blocks of certain neighbour transitive codes.  In particular, in Section \ref{secbuildingblocks} we see that 
certain neighbour transitive permutation codes are the building blocks in one case, 
and in a second case, twisted permutation codes (which are also frequency permutation 
arrays) are the building blocks.  Twisted permutation codes were introduced by the authors in 
\cite{twisted} and are less well known, but can have improved error correcting capabilities over repeated 
copies of the usual permutation codes \cite[Theorem 1.1]{twisted}.  

\subsection{Every finite group can be considered as a neighbour transitive code}\label{fgsec}

Let $G$ be a finite group of order $q$, and consider the Hamming graph $\Gamma=H(q,q)$ over $G$ of length $q$, 
that is $V(\Gamma)$ consists of all $q$-tuples with entries from $G$. 
Cayley's theorem tells us that $G$ has a faithful regular action on itself by multiplication on the right. 
Let $r(g)$ denote the image of $g\in G$ under this action, and consider
a fixed ordering $\mathfrak{o}=(g_1,\ldots,g_q)$ of the elements of $G$.
Then we can define the following vertex and code in $H(q,q)$.  For $g\in G$ define 
\[\alpha_{\mathfrak{o}}(g)=(g_1^{r(g)},\ldots,g_q^{r(g)})=(g_1g,\ldots,g_qg)\]
and the \emph{permutation code of $G$ with respect to $\mathfrak{o}$} to be 
\begin{equation*}\label{fgpdef}C_{\mathfrak{o}}(G)=\{\alpha_{\mathfrak{o}}(g)\,|\,g\in G\}.\end{equation*}
Given two orderings $\mathfrak{o}$ and $\mathfrak{o}$' of the elements of $G$, we prove in Example \ref{fgex}
that the codes $C_{\mathfrak{o}}(G)$ and $C_{\mathfrak{o}'}(G)$ are equivalent in $\Gamma$.
Thus we can talk of \emph{the permutation code of $G$}, which we denote by $C(G)$.
Let $S_G$ denote the Symmetric group of $G$.  We prove the following theorem in Example \ref{fgex}.

\begin{theorem}\label{fgntthm} Let $G$ be finite group and $C(G)$ be the permutation code of $G$.  Then 
$C(G)$ is $(S_G\times S_G)$-neighbour transitive.
\end{theorem}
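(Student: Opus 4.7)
The plan is to apply a base-group automorphism of $\Gamma$ which turns $C(G)$ into the repetition code, and then verify $(S_G\times S_G)$-neighbour transitivity in that simpler setting.

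Index the coordinate positions of $\Gamma=H(q,q)$ by $G$ via the ordering $\mathfrak{o}$, so that vertices are identified with functions $f\colon G\to G$ and the codewords of $C(G)$ correspond to the maps $f_g(x)=xg$ for $g\in G$. Define $\beta\in B\cong S_G^G$ to be the element whose $x$-th coordinate is $\ell(x)^{-1}$, where $\ell(x)$ denotes left multiplication by $x$, so that $\beta$ sends the vertex $f$ to the vertex $x\mapsto x^{-1}f(x)$. Then $\beta(f_g)(x)=x^{-1}(xg)=g$ is constant in $x$, so $\beta(C(G))$ is the repetition code $R:=\{(g,g,\ldots,g):g\in G\}$. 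Since $\beta\in\Aut(\Gamma)$, the codes $C(G)$ and $R$ are equivalent, and a subgroup of $\Aut(\Gamma)$ acts neighbour transitively on one if and only if its $\beta^{-1}$-conjugate does so on the other.

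Now consider two commuting copies of $S_G$ inside $\Aut(\Gamma)=B\rtimes L$: the top group $L\cong S_G$, and the diagonal subgroup $\Delta=\{(\tau,\ldots,\tau):\tau\in S_G\}\leq B$. These commute because permuting coordinates commutes with applying a single permutation to every coordinate, so $L\times\Delta\cong S_G\times S_G$ sits inside $\Aut(\Gamma)$. Both $L$ and $\Delta$ preserve $R$ setwise (reorderings and simultaneous relabellings of a constant tuple are constant tuples), and $\Delta$ acts transitively on $R$ via $\tau\cdot(g,\ldots,g)=(\tau(g),\ldots,\tau(g))$, so $R$ is an $(L\times\Delta)$-orbit.

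The essential point is transitivity on the neighbour set $R_1$. Any neighbour of $R$ equals some constant value $g$ at every coordinate except one position $y$, where it takes a distinct value $z$, and a direct calculation gives $(\sigma,\tau)\cdot(g,y,z)=(\tau(g),\sigma(y),\tau(z))$ for $(\sigma,\tau)\in L\times\Delta$. To send the neighbour encoded by $(g_0,y_0,z_0)$ to the one encoded by $(g_1,y_1,z_1)$, choose $\sigma\in S_G$ with $\sigma(y_0)=y_1$, and choose $\tau\in S_G$ with $\tau(g_0)=g_1$ and $\tau(z_0)=z_1$; the latter exists because $g_0\neq z_0$, $g_1\neq z_1$, and $S_G$ is $2$-transitive on $G$. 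Hence $L\times\Delta$ is transitive on $R_1$, and conjugating by $\beta^{-1}$ yields a subgroup of $\Aut(\Gamma)$ isomorphic to $S_G\times S_G$ witnessing that $C(G)$ is $(S_G\times S_G)$-neighbour transitive. The main hurdle is conceptual rather than computational: spotting that the coordinate-dependent left-multiplication $\beta$ untwists the right-regular representation into constant tuples, reducing an apparently subtle symmetry claim about $C(G)$ to the obvious symmetry of the repetition code. After that, verification amounts to tracking the action of $L\times\Delta$ on neighbours and invoking $2$-transitivity of $S_G$ on $G$.
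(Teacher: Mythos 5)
Your proof is correct, and its overall architecture coincides with the paper's: both arguments reduce the claim to the statement that the repetition code $\Rep(q,q)$ is neighbour transitive under $\Diag_q(S_G)\rtimes L\cong S_G\times S_G$. The difference lies in how each step is executed. The paper obtains the equivalence $C(G)\sim\Rep(q,q)$ abstractly: Lemma \ref{regiffrep}(ii) shows that regularity of $r(G)$ forces minimum distance $q$, whereupon Lemma \ref{cardc=q} (imported from earlier work) places the code inside the repetition code, and a cardinality count finishes it; the neighbour transitivity of $\Rep(q,q)$ is then cited from the authors' previous paper. You instead exhibit the equivalence explicitly, via the base-group element $\beta$ whose $x$-th coordinate is left multiplication by $x^{-1}$, which ``untwists'' the right-regular codewords into constant tuples, and you verify the transitivity of $L\times\Diag_q(S_G)$ on $\Rep(q,q)$ and on its neighbour set by a direct computation using $2$-transitivity of $S_G$. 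What your route buys is self-containment and an explicit witness for the equivalence (no appeal to the general classification-style Lemma \ref{cardc=q} or to external references); what the paper's route buys is brevity and consistency with the machinery it has already set up, since Lemma \ref{regiffrep} is needed anyway for the broader discussion of permutation codes $C(T)$. Both are complete proofs.
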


\section{Definitions and Preliminaries}\label{secdefprelim}

Recall that any code of length $m$ over an alphabet $Q$ of size $q$ can be
embedded in the vertex set of the \emph{Hamming graph}.  The Hamming graph $\Gamma=H(m,q)$ 
has vertex set $V(\Gamma)$, the set of $m$-tuples with entries from $Q$, and an edge exists between two
vertices if and only if they differ in precisely one entry.
Throughout we assume that $m,q\geq 2$.  The automorphism group of $\Gamma$, which we
denote by $\Aut(\Gamma)$, is the semi-direct product
$B\rtimes L$ where $B\cong S_q^m$ and $L\cong S_m$, see \cite[Theorem 9.2.1]{distreg}.    Let
$g=(g_1,\ldots, g_m)\in B$, $\sigma\in L$ and
$\alpha=(\alpha_1,\ldots,\alpha_m)\in V(\Gamma)$. Then $g$ and $\sigma$
act on $\alpha$ in the following way: \begin{equation}\label{autgpaction}
\alpha^g=(\alpha_1^{g_1},\ldots,\alpha_m^{g_m}),\,\,\,\,\,\,\,\alpha^\sigma=(\alpha_{1\sigma^{-1}},\ldots,\alpha_{m\sigma^{-1}}). \end{equation}
For any subgroup $T$ of $S_q$, we define the following subgroup of
$B$: \begin{equation}\label{diagdef}\Diag_m(T)=\{x_h=(h,\ldots,h)\in B\,:\,h\in T\}.\end{equation}  Let 
$M=\{1,\ldots,m\}$, and view $M$ as the set of vertex entries of
$H(m,q)$.  For all pairs of vertices $\alpha,\beta\in V(\Gamma)$, the \emph{Hamming distance} between
$\alpha$ and $\beta$, denoted by $d(\alpha,\beta)$, is defined to be
the number of entries in which the two vertices differ.  We let
$\Gamma_k(\alpha)$ denote the set of vertices in $H(m,q)$ that are at
distance $k$ from $\alpha$.  

For a code $C$ in $H(m,q)$, the \emph{minimum distance, $\delta$,
  of C} is the smallest distance between distinct codewords of $C$, which we sometimes denote
by $\delta(C)$ if we want to make specific reference to the code.
For any $\gamma\in V(\Gamma)$, we define 
\[d(\gamma,C)=\min\{d(\gamma,\beta)\,:\,\beta\in C\}\] to be the 
\emph{distance of $\gamma$ from $C$}.  The \emph{covering radius of $C$}, 
which we denote by $\rho$, is the maximum distance that any vertex in $H(m,q)$ is from $C$.  
We let $C_i$ denote the set of vertices that are at distance $i$ from $C$, and deduce,
for $i\leq \lfloor (\delta-1)/2\rfloor$, that $C_i$ is the disjoint 
union of $\Gamma_i(\alpha)$ as $\alpha$ varies over $C$.  
Furthermore, $C_0=C$ and $\{C,C_1,\ldots,C_\rho\}$ forms a
partition of $V(\Gamma)$ called the \emph{distance partition of $C$}. 
In particular, the \emph{complete code} $C=V(\Gamma)$ has covering
radius $0$ and trivial distance partition $\{C\}$; and if $C$ is not
the complete code, we call the non-empty subset $C_1$ the \emph{set of
  neighbours of $C$}. 

\begin{definition}\label{ntrans} Let $C$ be a code with distance partition
  $\{C,C_1,\ldots,C_\rho\}$.  Recall that $C$ is \emph{$X$-neighbour
    transitive}, or simply \emph{neighbour transitive}, if there exists $X\leq \Aut(\Gamma)$ such that $C$ and
  $C_1$ are $X$-orbits in $H(m,q)$.  If $C$ is $X$-neighbour transitive with $X\leq\Diag_m(S_q)\rtimes L$ then we say $C$
is \emph{diagonally neighbour transitive}.  If each $C_i$ is an $X$-orbit for $i=0,\ldots,\rho$ we say $C$ is \emph{$X$-completely transitive},
or simply \emph{completely transitive}.
\end{definition}

\begin{example}\label{repex} For $a\in Q$ let $(a^m)=(a,\ldots,a)\in H(m,q)$.  The \emph{repetition code} in $H(m,q)$ is the code
  \[\Rep(m,q)=\{(a^m)\,:\,a\in Q\}.\] It has minimum distance $\delta=m$ and is one of the simplest neighbour transitive codes \cite{diagnt}.  
  It is also true that $\Rep(m,2)$ is completely transitive \cite{comtran}.  However,
 if $m\geq 4$ and $q\geq 3$ then $\Rep(m,q)$ is not completely transitive \cite[Lemma 2.15]{comtran}.
\end{example}

We say two codes $C$ and $C'$ are \emph{equivalent} if there exists $x\in\Aut(\Gamma)$
such that $C^x=C'$, and if $C'=C$ we call $x$ an automorphism of $C$.  
The \emph{automorphism group of $C$}, denoted by $\Aut(C)$, is the setwise stabiliser of $C$ in $\Aut(\Gamma)$. It turns out
that any neighbour transitive code in $H(m,q)$ with minimum distance $\delta=m$ is equivalent to $\Rep(m,q)$.  To explain this result, we 
introduce \emph{$s$-regular} codes. 

\begin{definition}\label{sregdef} A code with covering radius $\rho$ is \emph{$s$-regular}, for a given $s\in\{0,\ldots,\rho\}$, 
if for $k\geq 1$ and $\nu\in C_i$, with $0\leq i\leq s$, the cardinality of the set $\Gamma_k(\nu)\cap C$ is independent of $\nu$ 
and only depends on $i$ and $k$.  A code that is $\rho$-regular is said to be \emph{completely regular.}\end{definition}
\noindent It is a consequence of the definitions that neighbour transitive codes are necessarily $1$-regular, and
it is known that completely transitive codes are completely regular \cite{giudici}.  The next result follows directly from 
\cite[Lemma 2.13]{comtran}.

\begin{lemma}\label{cardc=q}  Let $C$ be a code in $H(m,q)$ with $|C|\geq 2$ and
  $\delta=m\geq 2$.  Then there exists $C'$ equivalent to $C$ with
  $C'\subseteq\Rep(m,q)$.  Moreover if $C$ is $1$-regular then
  $C'=\Rep(m,q)$.  In particular, any neighbour transitive code with $\delta=m$ is equivalent to $\Rep(m,q)$.
\end{lemma}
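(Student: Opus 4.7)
My plan is to construct the equivalence explicitly using an element of the base group $B$, and then leverage $1$-regularity to force equality with the full repetition code.

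First, enumerate the codewords as $C=\{\alpha^{(0)},\ldots,\alpha^{(t-1)}\}$ with $t=|C|\geq 2$. Since $\delta=m$, any two distinct codewords differ in every position, so for each $i\in\{1,\ldots,m\}$ the $t$ entries $\alpha^{(0)}_i,\ldots,\alpha^{(t-1)}_i$ are pairwise distinct; in particular $t\leq q$. Identifying $Q$ with $\{0,1,\ldots,q-1\}$, for each $i$ I would choose $g_i\in S_q$ with $(\alpha^{(j)}_i)^{g_i}=j$ for all $0\leq j<t$, which is possible because the $t$ symbols being moved are distinct and $t\leq q$, the remaining $q-t$ symbols of $Q$ being free choices. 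Setting $g=(g_1,\ldots,g_m)\in B$, the base-group action in \eqref{autgpaction} gives $(\alpha^{(j)})^g=(j,j,\ldots,j)\in\Rep(m,q)$, so $C':=C^g\subseteq\Rep(m,q)$ with $|C'|=t$. This establishes the first assertion.

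Next assume $C$ (and hence $C'$) is $1$-regular; the goal is to show $t=q$. Suppose for contradiction $t<q$, so $C'=\{(j^m):0\leq j<t\}\subsetneq\Rep(m,q)$. I would exhibit two elements of $C'_1$ with different values of $|\Gamma_m(\cdot)\cap C'|$, contradicting $1$-regularity on $C_1$ with $k=m$. Starting from the codeword $(0^m)$, let $\nu_1$ be obtained by changing the last coordinate of $(0^m)$ to $1$ (possible since $t\geq 2$) and let $\nu_2$ be obtained by changing the last coordinate to some $a$ with $t\leq a\leq q-1$ (possible since $t<q$); both lie in $C'_1$, being adjacent to $(0^m)$ and non-constant. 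A direct distance count against each $(j^m)\in C'$ shows that $(1^m)$ is at distance $m-1$ from $\nu_1$ while every codeword $(j^m)$ with $j\neq 0,1$ is at distance $m$, so $|\Gamma_m(\nu_1)\cap C'|=t-2$; whereas for $\nu_2$ no codeword $(j^m)$ with $0<j<t$ is at distance $m-1$ (since $a\neq j$ in the altered coordinate as $j<t\leq a$), giving $|\Gamma_m(\nu_2)\cap C'|=t-1$. These values differ, a contradiction, so $t=q$ and $C'=\Rep(m,q)$.

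The final sentence of the lemma then follows immediately, since any neighbour transitive code is $1$-regular, as noted after Definition~\ref{sregdef}. The main obstacle in this plan is really only locating the right quantity to compute for the $1$-regularity step: once one recognises that $\delta=m$ forces each column of the codeword array to have pairwise distinct entries, both the construction of $g$ and the distance count above are essentially forced.
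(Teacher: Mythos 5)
Your proof is correct. Note that the paper itself does not prove this lemma: it simply records that the statement ``follows directly from [Lemma 2.13, \emph{comtran}]'', so there is no in-paper argument to compare against line by line. Your first step --- using $\delta=m$ to see that each column of the codeword array has pairwise distinct entries, hence $|C|\leq q$, and then building $g=(g_1,\ldots,g_m)\in B$ coordinatewise to send the $j$th codeword to $(j^m)$ --- is the natural (and surely the intended) construction, and it is valid. For the $1$-regularity step, your choice of $k=m$ is the right one: with $\nu_1=(0,\ldots,0,1)$ and $\nu_2=(0,\ldots,0,a)$ for $a\geq t$ the counts $|\Gamma_m(\nu_i)\cap C'|$ are $t-2$ and $t-1$ respectively, which indeed violates $1$-regularity when $t<q$ (note that $k=1$ would \emph{not} work here for $m\geq 3$, since both neighbours then see exactly one codeword at distance $1$, so locating the discriminating value of $k$ is genuinely the point, as you say; $k=m-1$ would also serve). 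The only things worth making explicit are that $1$-regularity transfers from $C$ to $C'$ because automorphisms of $\Gamma$ preserve distances and the distance partition (Lemma~\ref{dpartntreq}), and that $m\geq 2$ guarantees $\nu_1\notin C'$; you use both implicitly and both are immediate. The final deduction from the remark following Definition~\ref{sregdef} that neighbour transitive codes are $1$-regular is exactly right.
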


Let $C$ be a code and $\alpha$ be any vertex in $H(m,q)$.  As $\Aut(\Gamma)$ acts transitively on
the vertices of $\Gamma$, there exists $y\in \Aut(\Gamma)$ such that $\alpha\in C^y$. The next result allows us
to take advantage of this fact.

\begin{lemma}([Lem. 2, \cite{diagnt}])\label{dpartntreq} Let $C$ be a code with distance partition
  $\mathcal{C}=\{C,C_1,\ldots,C_\rho\}$ and $y\in\Aut(\Gamma)$.  Then
  $C_i^y:=(C_i)^y=(C^y)_i$ for each $i$.  In particular, the code
  $C^y$ has distance partition $\{C^y,C_1^y,\ldots,C^y_\rho\}$, and
  $\mathcal{C}$ is $\Aut(C)$-invariant.  Moreover, $C$ is
  $X$-neighbour transitive if and only if $C^y$ is
  $X^y$-neighbour transitive.      
\end{lemma}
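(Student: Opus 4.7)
The plan is to reduce everything to the single fact that automorphisms of $\Gamma$ preserve Hamming distance. Recall $\Aut(\Gamma)$ acts on $V(\Gamma)$ by graph automorphisms, so for any $\alpha,\beta\in V(\Gamma)$ and any $y\in\Aut(\Gamma)$ we have $d(\alpha^y,\beta^y)=d(\alpha,\beta)$. This is really the only non-trivial input, and once noted, everything in the statement follows by bookkeeping.

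First I would establish the core claim $(C_i)^y=(C^y)_i$. The forward inclusion is immediate: if $\gamma\in C_i$ then $d(\gamma,C)=i$, and choosing $\beta\in C$ realising the minimum gives $d(\gamma^y,\beta^y)=i$ with $\beta^y\in C^y$, so $d(\gamma^y,C^y)\le i$; applying the same argument with $y^{-1}$ shows $d(\gamma^y,C^y)=i$, i.e.\ $\gamma^y\in(C^y)_i$. The reverse inclusion follows identically by applying $y^{-1}$ to a vertex in $(C^y)_i$. Taking the union of these equalities over $i\in\{0,1,\ldots,\rho\}$ shows that $\{C^y,C_1^y,\ldots,C_\rho^y\}$ is the distance partition of $C^y$; note in particular that the covering radius is preserved, which justifies indexing up to the same $\rho$.

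Next, for $\Aut(C)$-invariance of $\mathcal{C}$, I would simply specialise the previous equality: if $y\in\Aut(C)$ then $C^y=C$ and so $C_i^y=(C^y)_i=C_i$ for each $i$, so each member of the partition is setwise fixed.

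For the final assertion on neighbour transitivity, I would use conjugation. Suppose $C$ is $X$-neighbour transitive, so $X$ acts transitively on each of $C$ and $C_1$. For $\alpha,\beta\in C^y$, write $\alpha={\alpha'}^y$ and $\beta={\beta'}^y$ with $\alpha',\beta'\in C$, pick $x\in X$ with ${\alpha'}^x=\beta'$, and observe that $x^y:=y^{-1}xy\in X^y$ satisfies $\alpha^{x^y}=({\alpha'}^x)^y=\beta$; the same argument with $C_1$ in place of $C$, using $C_1^y=(C^y)_1$ from the first part, shows $X^y$ is transitive on $(C^y)_1$. The converse is the same argument with the roles of $C,X,y$ and $C^y,X^y,y^{-1}$ interchanged, noting $(X^y)^{y^{-1}}=X$. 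There is no real obstacle here; the only point deserving care is keeping the covering radius consistent under $y$, which is exactly what the distance-preservation argument delivers.
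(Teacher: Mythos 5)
Your proof is correct. Note that the paper does not actually supply a proof of this lemma --- it is quoted from \cite{diagnt} --- but your argument (distance preservation under graph automorphisms gives $d(\gamma^y,C^y)=d(\gamma,C)$, hence $(C_i)^y=(C^y)_i$, with the transitivity statements then following by conjugating $X$ to $X^y$) is exactly the standard argument one would give, and all the steps, including the preservation of the covering radius and the converse via $y^{-1}$, are handled correctly.
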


\noindent By replacing $C$ with the equivalent code $C^y$ if necessary, Lemma \ref{dpartntreq} allows us to assume 
that $\alpha$ is a codeword in our neighbour transitive code $C$. We use this trick several times throughout this paper.

\subsection{Description of Neighbours}\label{descneigh} Let
$\alpha=(\alpha_1,\ldots,\alpha_m)$ be a vertex in $H(m,q)$, and for 
$a\in Q$ let $\nu(\alpha,i,a)$ denote the vertex with $j$th entry 
\begin{equation}\label{neiform}\nu(\alpha,i,a)|_j=\left\{\begin{array}{ll}     
 \alpha_j&\textnormal{if $j\neq i$}\\ a &\textnormal{if
   $j=i$.} \end{array}\right.\end{equation}
We note that if $\alpha_i=a$ then $\nu(\alpha,i,a)=\alpha$, otherwise it is adjacent to $\alpha$.  
Throughout this paper if we refer to $\nu(\alpha,i,a)$ as a neighbour of $\alpha$, or being
adjacent to $\alpha$, the reader should assume that $a\in Q\backslash\{\alpha_i\}$.
For $x=(h_1,\ldots,h_m)\sigma\in\Aut(\Gamma)$ it is known that 
\begin{equation}\label{neiact2} \nu(\alpha,i,a)^x=\nu(\alpha^x,i^\sigma,a^{h_i}),\end{equation}
which is neighbour of $\alpha^x$ if and only if $\nu(\alpha,i,a)$ is a neighbour of $\alpha$ \cite[Lemma 1]{diagnt}.
Combining this with the following result will prove useful in the sequel.

\begin{lemma}\label{little} Let $\nu(\alpha,i,a)$ and $\nu(\beta,j,b)$
  be respective neighbours of $\alpha=(\alpha_1,\ldots,\alpha_m)$ and  
  $\beta=(\beta_1,\ldots,\beta_m)$ in $H(m,q)$ such that
  $\nu(\alpha,i,a)=\nu(\beta,j,b)$.  Then one of the following holds: 
\begin{itemize}
\item[(i)] $\alpha=\beta$, $i=j$ and $a=b$;
\item[(ii)]$i=j$, $a=b$ and $\beta=\nu(\alpha,i,c)$ for some $c\neq \alpha_i,a$;
\item[(iii)] $d(\alpha,\beta)=2$, $i\neq j$, $\alpha_j=b$ and $\beta_i=a$.
\end{itemize}
\end{lemma}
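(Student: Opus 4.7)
The proof is a direct case analysis on whether the two coordinate positions $i$ and $j$ coincide, using the defining formula \eqref{neiform} and the standing convention that $a\neq\alpha_i$ and $b\neq\beta_j$.

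First I would handle the case $i=j$. Evaluating the common vertex $\nu(\alpha,i,a)=\nu(\beta,j,b)$ at position $i$ gives $a=b$ immediately, while evaluating at each position $k\neq i$ forces $\alpha_k=\beta_k$. Hence $\alpha$ and $\beta$ can differ only in position $i$. If $\alpha_i=\beta_i$ then $\alpha=\beta$ and we are in case (i). Otherwise $d(\alpha,\beta)=1$ and we may write $\beta=\nu(\alpha,i,c)$ with $c:=\beta_i$; from $c\neq\alpha_i$ (by distinctness) and $c=\beta_j\neq b=a$ (by the neighbour convention applied to $\beta$) we land in case (ii).

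Next I would handle the case $i\neq j$. For any $k\notin\{i,j\}$ the formula \eqref{neiform} gives $\alpha_k=\nu(\alpha,i,a)|_k=\nu(\beta,j,b)|_k=\beta_k$. Evaluating at position $i$ yields $a=\nu(\alpha,i,a)|_i=\nu(\beta,j,b)|_i=\beta_i$, and symmetrically at position $j$ yields $\alpha_j=b$. Since $\alpha_i\neq a=\beta_i$ and $\beta_j\neq b=\alpha_j$, the vertices $\alpha$ and $\beta$ differ in exactly the two positions $i,j$, so $d(\alpha,\beta)=2$, which is case (iii).

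There is no real obstacle here; the only thing to be careful about is to use the neighbour hypotheses $a\neq\alpha_i$ and $b\neq\beta_j$ consistently to verify the side conditions (in particular that $c\neq a$ in case (ii), and that $d(\alpha,\beta)$ really equals $2$ and not $1$ or $0$ in case (iii)). The three alternatives are mutually exclusive because (i) and (ii) both require $i=j$ whereas (iii) requires $i\neq j$, and (i) is distinguished from (ii) by whether $\alpha=\beta$.
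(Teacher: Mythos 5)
Your proof is correct, and it is organised differently from the paper's. The paper first uses the triangle inequality to bound $d(\alpha,\beta)\leq 2$ and then runs three cases on the value of $d(\alpha,\beta)$, deriving the relationship between $i$ and $j$ inside each case by contradiction (in particular the $d(\alpha,\beta)=1$ case requires ruling out the subcases $i,j\neq k$, then $i=k$, $j\neq k$, and its mirror, before concluding $i=j=k$). You instead case directly on whether $i=j$, reading off the entries of the common vertex $\nu(\alpha,i,a)=\nu(\beta,j,b)$ coordinate by coordinate; the value of $d(\alpha,\beta)$ then falls out as a conclusion rather than serving as the case hypothesis. Your route is slightly shorter and avoids both the triangle-inequality preliminary and the contradiction subcases, since the identity of entries at positions outside $\{i,j\}$ immediately pins down where $\alpha$ and $\beta$ can differ; the paper's route has the mild advantage of making the mutual exclusivity of (i)--(iii) visible from the outset, since the three conclusions correspond to the three possible distances. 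You also correctly verify the side conditions $c\neq\alpha_i,a$ in case (ii) and the strict equality $d(\alpha,\beta)=2$ in case (iii) using the standing convention $a\neq\alpha_i$, $b\neq\beta_j$, which is exactly where the content of the lemma lies.
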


\begin{proof} It follows that $d(\alpha,\beta)\leq
  d(\alpha,\nu(\alpha,i,a))+d(\nu(\beta,j,b),\beta)=2$ since
  $\nu(\alpha,i,a)=\nu(\beta,j,b)$.  Firstly assume $d(\alpha,\beta)=0$.
  Then $\alpha=\beta$.  Now suppose $i\neq j$.  Then
  $\nu(\alpha,i,a)|_i=\nu(\beta,j,b)|_i=\alpha_i$ since $\alpha=\beta$
  and $i\neq j$.  Thus $\alpha=\nu(\alpha,i,a)$ contradicting the fact that
  $\nu(\alpha,i,a)$ is a neighbour of $\alpha$.  Hence $i=j$ and so
  $\nu(\alpha,i,a)|_i=a=b=\nu(\beta,i,b)|_i$.  Thus (i) holds.  
  
  Now assume $d(\alpha,\beta)=1$.  Then $\beta=\nu(\alpha,k,c)$ for
  some $c\neq\alpha_k$.  Suppose $i,j\neq k$.  Then
  $\nu(\alpha,i,a)|_k=\alpha_k=\nu(\beta,j,b)|_k=c$, which is a
  contradiction.  Thus $i=k$ or $j=k$.  Suppose $i=k$ and $j\neq k$.
  Then $b\neq\beta_j=\alpha_j$.  However, 
  $\nu(\alpha,k,a)|_j=\alpha_j=\nu(\beta,j,b)_j=b$, which is a
  contradiction.  The condition $j=k$ and $i\neq k$ leads to a
  similar contradiction.  Thus $i=j=k$.  Hence
  $\nu(\alpha,i,a)|_i=a=b=\nu(\beta,i,b)|_i$.  Suppose $c=a$ and so
  $c=b$.  Then $\nu(\beta,j,b)=\nu(\beta,k,c)=\beta$, contradicting the fact that
  $\nu(\beta,j,b)$ is a neighbour of $\beta$.  Thus (ii) holds.
  
  Finally, assume $d(\alpha,\beta)=2$.  Suppose $i=j$.  Then
  $\nu(\alpha,i,a)=\nu(\beta,i,b)$, and so by definition,
  $\alpha_k=\beta_k$ for all $k\neq i$.  Thus $d(\alpha,\beta)\leq 1$
  and we have a contradiction.  Thus $i\neq j$.  It follows from the
  definitions that $\alpha_j=\nu(\beta,j,b)=b$ and $\beta_i=\nu(\alpha,i,a)=a$. 
\end{proof}
 
\subsection{Group Actions}\label{grpactsec}
For a nonempty set $\Omega$, we denote the group of permutations of $\Omega$ by $\Sym(\Omega)$.  A 
\emph{permutation group} $G$ on $\Omega$ is a subgroup of $\Sym(\Omega)$.  
The \emph{minimal degree} of $G$ is the smallest number of points moved by any non-identity element of $G$.
We say $G$ acts \emph{regularly} on $\Omega$ if $G$ is a transitive subgroup of $\Sym(\Omega)$ and $G_\alpha=1$ for all $\alpha\in\Omega$.  

For an abstract group $G$,  an \emph{action} of $G$ on $\Omega$ is a homomorphism $\rho$ from $G$ to $\Sym(\Omega)$. 
The \emph{degree of the action} is the cardinality of $\Omega$.  
Let $\rho_1:G\longrightarrow\Sym(\Omega)$ and $\rho_2:H\longrightarrow\Sym(\Omega')$ be actions 
of the groups $G,H$ on $\Omega, \Omega'$ respectively.  We say these actions are \emph{permutationally isomorphic} if there exists a 
bijection $\lambda:\Omega\longrightarrow\Omega'$ and an
isomorphism $\varphi:\rho_1(G)\longrightarrow \rho_2(H)$ such that 
\begin{equation*}\lambda(\alpha^{\rho_1(g)})=\lambda(\alpha)^{\varphi(\rho_1(g))}\quad\quad
\textnormal{for all $\alpha\in\Omega$ and $g\in G$},\end{equation*}
and we call $(\lambda,\varphi)$ a \emph{permutational isomorphism}.  

We now consider three distinct actions
for the automorphism group $X$ of an $X$-neighbour transitive code.  First we consider its natural
action on the code.

\begin{lemma}\label{codeimprim}  Let $C$ be an $X$-neighbour transitive code with
  minimum distance $\delta\geq 3$.  Let $\Delta$ be a block of imprimitivity for the
  action of $X$ on $C$.  Then $\Delta$ is an $X_{\Delta}$-neighbour
  transitive code with minimum distance at least $\delta$.
\end{lemma}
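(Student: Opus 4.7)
The plan is to verify the two required conclusions: the minimum distance bound and the $X_{\Delta}$-neighbour transitivity of $\Delta$. The first is essentially immediate, since $\Delta \subseteq C$ implies that any two distinct codewords of $\Delta$ are distinct codewords of $C$, so $\delta(\Delta) \geq \delta$. The case $|\Delta| \leq 1$ is vacuous. Transitivity of $X_{\Delta}$ on $\Delta$ is the standard block property: given $\alpha, \beta \in \Delta$, there exists $x \in X$ with $\alpha^x = \beta$ (since $X$ is transitive on $C$), whence $\beta \in \Delta \cap \Delta^x$, forcing $\Delta^x = \Delta$ by the definition of a block, so $x \in X_{\Delta}$.

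The main work is to show $X_{\Delta}$ is transitive on $\Delta_1$. The crucial preliminary observation, which I would isolate first, is that because $\delta \geq 3$, every vertex $\nu \in C_1$ has a \emph{unique} nearest codeword in $C$: two distinct codewords of $C$ adjacent to the same $\nu$ would be at distance at most $2 < \delta$. This defines a map $\pi \colon C_1 \to C$ sending $\nu$ to its unique nearest codeword, and by \eqref{neiact2} this map is $X$-equivariant. I would then check the inclusion $\Delta_1 \subseteq C_1$: if $\nu \in \Delta_1$ is adjacent to $\alpha \in \Delta \subseteq C$, then $\nu \notin C$, for otherwise $\nu$ would be a codeword of $C$ at distance $1$ from $\alpha$, contradicting $\delta \geq 3$. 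In particular, $\pi(\nu) \in \Delta$ for every $\nu \in \Delta_1$.

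Now, given $\nu_1, \nu_2 \in \Delta_1$, use that $C_1$ is an $X$-orbit to pick $x \in X$ with $\nu_1^x = \nu_2$. Since $x$ preserves distances, $\pi(\nu_1)^x = \pi(\nu_2)$, and both of these lie in $\Delta$ by the previous paragraph. Thus $\Delta \cap \Delta^x \ni \pi(\nu_2)$, and the block property again yields $\Delta^x = \Delta$, so $x \in X_{\Delta}$. This establishes transitivity of $X_{\Delta}$ on $\Delta_1$, completing the verification that $\Delta$ is $X_{\Delta}$-neighbour transitive.

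The only real obstacle is the transitivity on $\Delta_1$, and it is handled entirely by the interplay of two facts: the uniqueness of the nearest codeword guaranteed by $\delta \geq 3$ (so an element of $X$ mapping neighbours to neighbours automatically maps their nearest codewords to each other), and the defining property of a block of imprimitivity (so an element moving one point of $\Delta$ into $\Delta$ must stabilise $\Delta$ setwise). No calculation beyond invoking Lemma~\ref{dpartntreq} or \eqref{neiact2} for distance preservation is required.
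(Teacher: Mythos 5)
Your proposal is correct and follows essentially the same route as the paper: the paper also deduces $\Delta^x=\Delta$ from the fact that $\delta\geq 3$ forces the codeword adjacent to a given neighbour to be unique (phrased there as ``if $\alpha_1^x\neq\alpha_2$ then $d(\alpha_2,\alpha_1^x)\leq 2$, a contradiction''), which is exactly your equivariant map $\pi$ in inline form. Your explicit check that $\Delta_1\subseteq C_1$ is a small point of rigor the paper leaves implicit, but the argument is the same.
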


\begin{proof} Since $\Delta$ is a block of imprimitivity for the
  action of $X$ on $C$, it follows that $X_\Delta$ acts transitively on $\Delta$ \cite[p.13]{dixonmort}.  
  Let $\nu_1,\nu_2$ be neighbours of $\Delta$.  Then there exist codewords $\alpha_1,
  \alpha_2$ of $\Delta$ that are respectively adjacent to $\nu_1,
  \nu_2$.  As $C$ is $X$-neighbour transitive, there exists $x\in X$ such that $\nu_1^x=\nu_2$.  We claim
  that $\alpha_1^x=\alpha_2$.  Suppose not.  Then there exists a codeword
  $\alpha_3\in C$ such that $\alpha_1^x=\alpha_3$ and $\nu_2$ is a
  neighbour of $\alpha_3$.  This implies that
  $d(\alpha_2,\alpha_3)\leq 2$, contradicting the minimum distance of
  $C$.  Hence $\alpha_1^x=\alpha_2$. Therefore, because $\Delta$ is a
  block of imprimitivity, $\Delta^x=\Delta$.  Finally, because 
  $C$ has minimum distance $\delta$ and $\Delta\subseteq C$, it follows
  directly that $\Delta$ has minimum distance at least $\delta$.
\end{proof}

Let us now describe two alternative actions for an automorphism group $X\leq \Aut(\Gamma)$.  We define \[\begin{array}{c c c c}   
     \mu:&\Aut(\Gamma)&\longrightarrow&S_m\\ 
&h\sigma&\longmapsto&\sigma,\\
\end{array}
\]
and when we talk of the action of $X$ on $M$, or the action of $X$ on
entries, we are referring to the action of $\mu(X)$ on $M$.  We denote $\mu(X)$ by $X^M$.  
Now, for $i\in M$ let $X_i=\{x=h\sigma\in X\,:\,i^\sigma=i\}$, which has an action on the alphabet $Q$ 
via the following homomorphism:    
\begin{equation}\label{varhom}\begin{array}{c c c c}
     \varphi_i:&X_i&\longrightarrow&S_q\\ 
&(h_1,\ldots,h_m)\sigma&\longmapsto&h_i\\
\end{array}\end{equation}  We denote the image $\varphi_i(X_i)$ by $X_i^Q$. Next we collate three results that appear in 
\cite[Prop. 2.7, Cor. 2.8, Prop. 2.9]{comtran}, applying them to our context.  

\begin{proposition}\label{actionsprop} Let $C$ be an $X$-neighbour
  transitive code with $\delta\geq 3$.  Let $\alpha\in C$ and $i\in M$. Then
\begin{itemize}
\item[(i)] $X_\alpha$ acts transitively on $\Gamma_1(\alpha)$ and $M$,
\item[(ii)] $X_i$ acts transitively on $C$, and 
\item[(iii)] $X_i^Q$ acts $2$-transitively.    
\end{itemize} 
\end{proposition}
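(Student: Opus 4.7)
The plan is to establish (i), (ii), (iii) in sequence, since each part feeds into the next.

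For (i), the key observation is that $\delta\geq 3$ forces every vertex of $C_1$ to have a \emph{unique} nearest codeword. Hence if $x\in X$ carries a neighbour $\nu_1$ of $\alpha$ to another neighbour $\nu_2$ of $\alpha$, then $\alpha^x$ must be the unique codeword adjacent to $\nu_2$, namely $\alpha$ itself, so $x\in X_\alpha$. The $X$-transitivity on $C_1$ therefore restricts to $X_\alpha$-transitivity on $\Gamma_1(\alpha)$. For transitivity on $M$, I would partition $\Gamma_1(\alpha)$ by the coordinate in which the neighbour differs from $\alpha$; by \eqref{neiact2}, any $x=h\sigma\in X_\alpha$ sends the block at position $i$ to the block at position $i^\sigma$, so transitivity on $\Gamma_1(\alpha)$ forces transitivity of $X_\alpha$ on $M$.

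Part (ii) is a short bootstrap from (i). For $\alpha,\beta\in C$, the $X$-transitivity on $C$ gives $y\in X$ with $\alpha^y=\beta$. If $j:=i^{\mu(y)}\neq i$, I apply (i) to the codeword $\beta$ to produce $z\in X_\beta$ with $j^{\mu(z)}=i$. Then $yz\in X_i$ still satisfies $\alpha^{yz}=\beta$.

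Part (iii) is the main obstacle. I would introduce the set
\[
T_i=\{(\alpha,\nu(\alpha,i,a)) : \alpha\in C,\ a\in Q\setminus\{\alpha_i\}\}
\]
of codeword--neighbour pairs differing in coordinate $i$. Part (i) shows that $X$ acts transitively on $\bigcup_i T_i$ and permutes the blocks $T_i$ via $\mu$, which is transitive on $M$; the setwise stabiliser of $T_i$ in $X$ is precisely $X_i$, so the orbit--stabiliser theorem yields $X_i$-transitivity on $T_i$. The projection $(\alpha,\nu(\alpha,i,a))\mapsto(\alpha_i,a)\in Q\times Q$ intertwines this $X_i$-action with the diagonal action of $X_i^Q$ on $Q\times Q$, and its image contains every pair $(a,b)$ with $a\in Q_i:=\{\alpha_i:\alpha\in C\}$ and $b\in Q\setminus\{a\}$. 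From transitivity on this image I obtain that $X_i^Q$ is transitive on $Q_i$ and that each stabiliser $(X_i^Q)_a$ is transitive on $Q\setminus\{a\}$. Since $Q_i$ is $X_i^Q$-invariant, the transitivity of $(X_i^Q)_a$ on $Q\setminus\{a\}$ combined with its preservation of the subset $Q_i\setminus\{a\}$ forces $Q_i=Q$, after which $2$-transitivity of $X_i^Q$ on $Q$ is immediate. The delicate point is precisely this last step: transitivity of the stabiliser on $Q\setminus\{\alpha_i\}$ comes almost for free from (i), but upgrading this to genuine $2$-transitivity on all of $Q$ requires the auxiliary argument that every alphabet symbol actually occurs in coordinate $i$ of some codeword.
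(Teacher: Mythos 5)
Your proposal is correct, but note that the paper does not actually prove this proposition: it is imported verbatim from \cite{comtran} (Prop.~2.7, Cor.~2.8, Prop.~2.9 there), so there is no in-text proof to compare against. Your self-contained argument is sound and follows the natural route. Part (i) correctly exploits that $\delta\geq 3$ gives each element of $C_1$ a unique nearest codeword, so any $x\in X$ mapping one element of $\Gamma_1(\alpha)$ to another must fix $\alpha$; the passage to transitivity on $M$ via the coordinate blocks of $\Gamma_1(\alpha)$ and \eqref{neiact2} is fine. Part (ii) is the standard bootstrap. For part (iii), your block system $\{T_i\}_{i\in M}$ with setwise stabiliser $X_i$, together with the $\varphi_i$-equivariant projection to $Q\times Q$, correctly yields that $X_i^Q$ is transitive on $Q_i=\{\alpha_i:\alpha\in C\}$ and that $(X_i^Q)_a$ is transitive on $Q\setminus\{a\}$ for $a\in Q_i$; you are right that the only delicate step is showing $Q_i=Q$, and your invariance argument handles it.

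One small point you should make explicit: the final step needs $Q_i\setminus\{a\}\neq\emptyset$, since a transitive group certainly preserves the empty set. If $Q_i=\{a\}$ then (because $X_\alpha$ is transitive on $M$) every $Q_j$ is a singleton, i.e.\ $|C|=1$, and the conclusion of (iii) genuinely fails (e.g.\ $C=\{(0,\dots,0)\}$ with $X$ the full stabiliser of that vertex gives $X_i^Q\cong S_{q-1}$ fixing $0$). This degenerate case is excluded implicitly by the hypothesis $\delta\geq 3$, which presupposes at least two distinct codewords; with $|C|\geq 2$ one gets $|Q_j|\geq 2$ for some $j$ and hence, by transitivity on $M$, for all $j$, and your argument closes. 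A sentence recording this would make the proof airtight.
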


Proposition \ref{actionsprop} gives us that $X_1^Q$ is $2$-transitive for
any $X$-neighbour transitive codes with minimum distance $\delta\geq 3$.  It
follows from Burnside's Theorem \cite[Theorem 4.1B]{dixonmort} that any $2$-transitive group is of
almost simple type or of affine type.  Thus, we recall the following definition from the introduction.

\begin{definition}\label{alphaasaf} Let $C$ be an $X$-neighbour
  transitive code.   We say $C$ is \emph{alphabet
    almost simple neighbour transitive} if $X^M$ acts transitively on $M$
  and $X_1^Q$ is $2$-transitive of almost simple type.  Similarly, if $X^M$ acts transitively
  and $X_1^Q$ is $2$-transitive of affine type, we say $C$ is \emph{alphabet affine neighbour 
    transitive.}    
\end{definition}

\section{Constructions of Neighbour Transitive Codes}\label{secconstr}

\subsection{Product and Repetition Constructions}\label{secprodrep}

In this section we consider $\ell$-tuples of codewords from a code $C$ in $H(m,q)$.
Let us first consider the set of all $\ell$-tuples of vertices from $H(m,q)$, that is, \[\Gamma^{\ell}=\{(\alpha_1,\ldots, \alpha_\ell)\,:\,\alpha_i\in H(m,q)\}.\]
It is clear that we can identify $\Gamma^{\ell}$ with the vertex set of $H(m\ell,q)$.  
For any $X\leq\Aut(\Gamma)$ we define an action of $X\wr S_\ell$ on $\Gamma^\ell$ in the natural way.  In particular, for 
$\bs{\alpha}=(\alpha_1,\ldots,\alpha_\ell)\in\Gamma^{\ell}$ and $y=(x_1,\ldots,x_\ell)\sigma\in X\wr S_\ell$, 
\begin{equation}\label{prodact}
\bs{\alpha}^{y}=(\alpha^{x_1}_{1},\ldots,\alpha^{x_\ell}_{\ell})^\sigma=
(\alpha_{1\sigma^{-1}}^{x_{1\sigma^{-1}}},\ldots,\alpha^{x_{\ell\sigma^{-1}}}_{\ell\sigma^{-1}}).\end{equation}
Now, for an arbitrary code $C$ in $H(m,q)$ we denote the complete set of
$\ell$-tuples of codewords from $C$ 
by \[\Prod_\ell(C)=\{(\alpha_1,\ldots,\alpha_\ell)\,|\,\alpha_i\in 
C\textnormal{ for all $i$ }\}.\]
For $\bs{\alpha}\in\Gamma^{\ell}$ and $\nu\in\Gamma$ we let $\gamma(\bs{\alpha},i,\nu)$ denote
the vertex constructed by changing that $i$th vertex entry of $\bs{\alpha}$ from $\alpha_i$ to $\nu$.  
It follows that $\gamma(\bs{\alpha},i,\nu)$ is in $\Prod_\ell(C)_1$, the neighbour set of $\Prod_\ell(C)$,
if and only if $\nu\in C_1$. Also, all members of $\Prod_\ell(C)_1$ are of this form.

\begin{lemma}\label{prodconstnt}  Let $C$ be an $X$-neighbour transitive code in $H(m,q)$ with minimum 
distance $\delta$.  Then $\Prod_\ell(C)$ is ($X\wr S_\ell$)-neighbour
transitive in $H(m\ell,q)$ with minimum distance $\delta$.  
\end{lemma}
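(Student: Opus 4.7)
The plan is to verify three things in turn: that $\Prod_\ell(C)$ has minimum distance exactly $\delta$, that $X\wr S_\ell$ acts transitively on $\Prod_\ell(C)$, and that $X\wr S_\ell$ acts transitively on the neighbour set $\Prod_\ell(C)_1$.

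For the minimum distance, given distinct $\bs{\alpha}=(\alpha_1,\ldots,\alpha_\ell)$ and $\bs{\beta}=(\beta_1,\ldots,\beta_\ell)$ in $\Prod_\ell(C)$, I would observe that under the identification of $\Gamma^\ell$ with $V(H(m\ell,q))$, the Hamming distance decomposes as $\sum_{i=1}^\ell d(\alpha_i,\beta_i)$ with the distances on the right being computed in $\Gamma=H(m,q)$. Since each non-zero term is a distance between distinct codewords of $C$, it is at least $\delta$, and hence the total is at least $\delta$. Equality is attained by choosing any pair $\alpha,\beta\in C$ with $d(\alpha,\beta)=\delta$ and comparing the tuples $(\alpha,\alpha,\ldots,\alpha)$ and $(\beta,\alpha,\ldots,\alpha)$.

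For transitivity on the code, given $\bs{\alpha},\bs{\beta}\in\Prod_\ell(C)$, I would use transitivity of $X$ on $C$ to select $x_i\in X$ with $\alpha_i^{x_i}=\beta_i$ for each $i$; then the element $(x_1,\ldots,x_\ell)\in X^\ell\leq X\wr S_\ell$ sends $\bs{\alpha}$ to $\bs{\beta}$ by \eqref{prodact}.

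The main step is transitivity on $\Prod_\ell(C)_1$. Using the description recorded before the lemma, every neighbour has the form $\gamma(\bs{\alpha},i,\nu)$ with $\bs{\alpha}\in\Prod_\ell(C)$, $i\in\{1,\ldots,\ell\}$, and $\nu\in C_1$; moreover $\nu\notin C$ ensures it is a genuine neighbour and not a codeword. Given two such neighbours $\gamma(\bs{\alpha},i,\nu)$ and $\gamma(\bs{\beta},j,\mu)$, the plan is to first pick $\sigma\in S_\ell$ with $i\sigma=j$, then pick $x_i\in X$ satisfying $\nu^{x_i}=\mu$ (possible since $X$ is transitive on $C_1$), and finally, for each $k\neq i$, pick $x_k\in X$ satisfying $\alpha_k^{x_k}=\beta_{k\sigma}$ (possible since $X$ is transitive on $C$). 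A direct calculation using \eqref{prodact} then shows that $y=(x_1,\ldots,x_\ell)\sigma\in X\wr S_\ell$ sends $\gamma(\bs{\alpha},i,\nu)$ to $\gamma(\bs{\beta},j,\mu)$.

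The only real obstacle is the bookkeeping in this last step: one must track carefully how the coordinate permutation $\sigma$ interleaves with the componentwise $X$-action in the wreath product, so that the constructed element actually sends one neighbour to the other. Once the indexing is handled cleanly via \eqref{prodact}, the verification is mechanical.
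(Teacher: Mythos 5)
Your proposal is correct and follows essentially the same route as the paper: describe every neighbour of $\Prod_\ell(C)$ as $\gamma(\bs{\alpha},i,\nu)$ with $\nu\in C_1$, use the $S_\ell$-factor to align the distinguished coordinates, and use componentwise transitivity of $X$ on $C$ and on $C_1$ to assemble the required element of $X\wr S_\ell$ (the paper phrases the coordinate alignment as a ``without loss of generality $i=j$'' reduction rather than building the single element $(x_1,\ldots,x_\ell)\sigma$ directly, but this is the same argument). The only substantive difference is that you verify the minimum-distance claim explicitly via the additive decomposition of the Hamming distance over the $\ell$ blocks, a point the paper's proof leaves unstated.
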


\begin{proof} It is clear that $X\wr S_\ell$ fixes $\Prod_\ell(C)$ setwise, and so by Lemma \ref{dpartntreq}, it 
also fixes setwise the neighbour set of $\Prod_\ell(C)$.  Moreover, because $X$ acts transitively on $C$
it follows that $X^\ell$ acts transitively of $\Prod_\ell(C)$.  Now let $\gamma(\bs{\alpha},i,\nu)$ and $\gamma(\bs{\beta},j,\nu')$
be two neighbours of $\Prod_\ell(C)$.  As $S_\ell$ acts transitively on $\{1,\ldots,\ell\}$ and is contained in the automorphism group of $\Prod_\ell(C)$, 
we can assume without loss of generality
that $i=j$.  As both $C$ and $C_1$ are $X$-orbits in $H(m,q)$, there exist $x_k\in X$ such that $\alpha_k^{x_k}=\beta_k$ for
each $k\neq i$, and $x_i\in X$ such that $\nu^{x_i}=\nu'$.  By letting $y=(x_1,\ldots,x_\ell)$, it follows that 
$\gamma(\bs{\alpha},i,\nu)^y=\gamma(\bs{\beta},i,\nu')$.
\end{proof}

An interesting subcode of $\Prod_\ell(C)$ is the set of all vertices with
constant entry.   We let \[\rep_\ell(\alpha)=(\alpha,\ldots,\alpha)\in
\Gamma^\ell\] for a vertex $\alpha\in H(m,q)$, and we
define \[\Rep_\ell(C)=\{\rep_\ell(\alpha)\,|\,\alpha\in C\}.\]
It follows from \cite[Lemma 5]{diagnt} and Proposition \ref{actionsprop} that if $C$ is $X$-neighbour
transitive with $\delta\geq 3$, then $\Rep_\ell(C)$ is $(X\times S_\ell)$-neighbour
transitive with minimum distance $\delta\ell$.  

\subsection{Projection Codes}\label{projsec}

Given any code $C$ we want to describe codes that are somehow
``contained'' in $C$ but in a ``smaller'' Hamming graph.  To explain this
idea let $J=\{i_1,\ldots,i_k\}\subseteq
M$, with $i_1<i_2<\ldots<i_k$, and define the following 
map \[\begin{array}{c c c c} \pi_J:&H(m,q)&\longrightarrow&H(J,q)\\  
&(\alpha_1,\ldots,\alpha_m)&\longmapsto&(\alpha_{i_1},\ldots,\alpha_{i_k})\\
\end{array}
\] Note that by $H(J,q)$ we mean the Hamming graph $H(|J|,q)$.  For a code $C$ in $H(m,q)$ we define the \emph{projected code of $C$ with
  respect to $J$} to be the set
\[\pi_J(C)=\{\pi_J(\alpha)\in H(J,q)\,:\,\alpha\in C\}.\] We are interested in 
projected codes of neighbour transitive codes, and therefore we want to obtain some
group information when we project.  Thus for $x\in \Aut(\Gamma)_J=\{h\sigma\in 
\Aut(\Gamma)\,:\,J^\sigma=J\}$ we define $\chi(x)$ to be the 
map \begin{equation}\label{chimap}\begin{array}{c c c c}
\chi(x):&H(J,q)&\longrightarrow&H(J,q)\\  
&\pi_J(\alpha)&\longmapsto&\pi_J(\alpha^x)\end{array}\end{equation}
We observe that this map is well defined if and only if $x\in\Aut(\Gamma)_J$, 
and that $\chi(\Aut(\Gamma)_J)\leq\Aut(H(J,q))$.

\begin{remark}\label{chirem} Let
  $x=(h_1,\ldots,h_m)\sigma\in\Aut(\Gamma)_J$ where
  $J=\{i_1,\ldots,i_k\}$.  Let $\hat{\sigma}$ be the induced action of
  $\sigma$ on $J$.  We claim that
  $\chi(x)=(h_{i_1},\ldots,h_{i_k})\hat{\sigma}$.  Let
  $\alpha=(\alpha_{i_1},\ldots,\alpha_{i_k})\in H(J,q)$.  Then there
  exists $\beta=(\alpha_1,\ldots,\alpha_m)\in H(m,q)$ such that
  $\pi_J(\beta)=\alpha$.  Thus
  $\alpha^{\chi(x)}=\pi_J(\beta)^{\chi(x)}=\pi_J(\beta^x)$, and so 
  \[\chi_J(\beta^x)=(\alpha_{{i_1}{\sigma^{-1}}}^{h_{{i_1}{\sigma^{-1}}}},\ldots,\alpha_{{i_k}{\sigma^{-1}}}^{h_{{i_k}{\sigma^{-1}}}}).\]
  By definition, $i^\sigma=i^{\hat{\sigma}}$ for all $i\in J$.
  Therefore, since $J^\sigma=J$, it follows
  that $\chi_J(\beta^x)=\alpha^{(h_{i_1},\ldots,h_{i_k})\hat{\sigma}}.$
  Because $\alpha$ was arbitrarily chosen, we conclude that
  $\chi(x)=(h_{i_1},\ldots,h_{i_k})\hat{\sigma}$.   
\end{remark}

Let $\pi_J(C)$ be the projected code in $H(J,q)$ of $C$ with distance partition 
$\{\pi_J(C),\pi_J(C)_1,\ldots,\pi_J(C)_{\rho_J}\}$.
In order to examine how the distance partition of $\pi_J(C)$ relates to the distance partition of $C$
we introduce the following set:\[C_1(J)=\{\nu(\alpha,j,b)\in 
C_1\,:\,\textnormal{$\alpha\in C$, $j\in J$ and $b\in Q\backslash\{\alpha_j\}$}\}.\]
\begin{lemma}\label{projneigh} Let $C$ be a code in $H(m,q)$ and $J\subseteq
  M$.  Then 
\begin{itemize}
\item[(i)] for all $\nu\in C_i$, $\pi_J(\nu)\in \pi_J(C)_k$ for some $k\leq i$.  
\item[(ii)] If $\delta\geq 2$, then $\pi_J(C)_1\subseteq\pi_J(C_1(J))$. 
\end{itemize}
\end{lemma}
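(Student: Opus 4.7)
My plan for part (i) is to use the elementary fact that the projection map $\pi_J$ is $1$-Lipschitz for the Hamming metric: whenever two vertices of $H(m,q)$ agree at a coordinate $j\in J$ their projections also agree at $j$, so $d(\pi_J(\alpha),\pi_J(\beta))\leq d(\alpha,\beta)$ for all $\alpha,\beta\in H(m,q)$. Given $\nu\in C_i$, I would pick a codeword $\alpha\in C$ realising $d(\nu,\alpha)=i$; then $d(\pi_J(\nu),\pi_J(\alpha))\leq i$ with $\pi_J(\alpha)\in\pi_J(C)$, and this immediately places $\pi_J(\nu)\in\pi_J(C)_k$ for some $k\leq i$.

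For part (ii) the plan is a lifting argument. First I would take an arbitrary $\gamma\in\pi_J(C)_1$; by definition of the neighbour set, $\gamma\notin\pi_J(C)$ and there is some $\alpha\in C$ with $d(\pi_J(\alpha),\gamma)=1$. Unpacking this, $\gamma$ and $\pi_J(\alpha)$ agree outside a single coordinate $j\in J$, with $\gamma_j=b\neq\alpha_j$. Next I would form the candidate lift $\nu:=\nu(\alpha,j,b)\in H(m,q)$ in the notation of \eqref{neiform}; by construction $\pi_J(\nu)=\gamma$, and $\nu$ is adjacent to $\alpha\in C$ since $b\neq\alpha_j$. To conclude $\nu\in C_1(J)$, I would exclude the possibility $\nu\in C$: otherwise $\gamma=\pi_J(\nu)\in\pi_J(C)$, contradicting $\gamma\in\pi_J(C)_1$. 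Hence $\gamma=\pi_J(\nu)\in\pi_J(C_1(J))$, as required.

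The main step requiring care, rather than a genuine obstacle, is verifying that the lifted vertex $\nu$ is a bona fide neighbour and not itself a codeword; the argument above handles this directly from $\gamma\notin\pi_J(C)$. The role of the hypothesis $\delta\geq 2$ in (ii) is to ensure that the neighbour description of Section~\ref{descneigh} applies cleanly, so that adjacency of $\nu$ to $\alpha\in C$ combined with $\nu\notin C$ genuinely places $\nu$ in $C_1(J)$. Both parts then reduce to careful bookkeeping with the distance partition and the definition of $\pi_J$.
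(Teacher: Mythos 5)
Your proof is correct and follows essentially the same route as the paper: part (i) is the same one-line Lipschitz argument, and part (ii) constructs the identical lift $\nu(\alpha,j,b)$ of a projected neighbour and checks it lies in $C_1(J)$. The only (harmless) divergence is that the paper deduces $\nu(\alpha,j,b)\in C_1$ from the hypothesis $\delta\geq 2$, whereas you deduce $\nu(\alpha,j,b)\notin C$ from $\gamma\notin\pi_J(C)$ --- which in fact shows $\delta\geq 2$ is not needed for this inclusion, so your closing remark about the role of that hypothesis is the only slightly inaccurate point.
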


\begin{proof}
  (i) For $\nu\in C_i$, there exists $\alpha\in C$ such that
  $d(\alpha,\nu)=i$, and so $d(\pi_{J}(\alpha),\pi_J(\nu))\leq i$.
  Hence $d(\pi_J(\nu),\pi_J(C))\leq i$.

  (ii) Let $\nu=(\nu_{i_1},\ldots,\nu_{i_k})\in\pi_J(C)_1$.
  Then $\nu$ is the neighbour of a codeword
  $\pi_J(\alpha)$ for some $\alpha=(\alpha_1,\ldots,\alpha_m)\in C$.  Thus
  there exists $j\in J$ such that $\nu_i=\pi_J(\alpha)_i=\alpha_i$ for all $i\in
  J\backslash\{j\}$ and $\nu_j\neq\pi_J(\alpha)_j=\alpha_j$.  Consider
  the vertex $\nu(\alpha,j,\nu_j)\in H(m,q)$.  Since
  $\nu_j\neq\alpha_j$ it follows that $\nu(\alpha,j,\nu_j)$ is
  adjacent to $\alpha$.  Moreover, $\nu(\alpha,j,\nu_j)\in C_1$ since
  $\delta\geq 2$.  Because $j\in J$, it follows that
  $\nu(\alpha,j,\nu_j)\in C_1(J)$.  In addition
  we have that $\pi_J(\nu(\alpha,j,\nu_j))=\nu$.  Thus
  $\nu\in\pi_J(C_1(J))$.
\end{proof}

We observe that the reverse inclusion of Lemma \ref{projneigh}--(ii)
does not always hold.  For example, let $C$ be a code with $\delta=2$
and $\alpha,\beta\in C$ such that $d(\alpha,\beta)=2$ with 
$\alpha,\beta$ differing in entries $i,k\in M$.  Let $J$ be a proper
subset of $M$ that contains $i$ but not $k$.  Consider the vertex
$\nu=\nu(\alpha,i,\beta_i)$, which is adjacent to $\alpha$.  Since
$\delta=2$ and $i\in J$, it follows that $\nu\in C_1(J)$.  However,
$\pi_J(\nu)=\pi_J(\beta)\in\pi_J(C)$.  We now show that, given certain
conditions, including $\delta\geq 3$, the projected code of a neighbour transitive code is
also neighbour transitive.    

\begin{proposition}\label{projcode}  Let $C$ be an $X$-neighbour transitive code with
  $\delta\geq 3$.  Moreover, let $\mathcal{J}=\{J_1,\ldots,J_\ell\}$ be an
  $X$-invariant partition of $M$.  Then $\pi_{J}(C)$ is a
  $\chi(X_{J})$-neighbour transitive code for each
  $J\in\mathcal{J}$. 
\end{proposition}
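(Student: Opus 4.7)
The plan is to verify the two conditions for neighbour transitivity of $\pi_J(C)$ under the induced group $\chi(X_J)$: transitivity on $\pi_J(C)$ itself, and transitivity on its neighbour set $\pi_J(C)_1$. The first is a soft consequence of the fact that the partition $\mathcal{J}$ is $X$-invariant, while the second requires a careful lift/descend argument combined with the minimum distance hypothesis $\delta\geq 3$.

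For transitivity on $\pi_J(C)$, I would first show that $X_J$ already acts transitively on $C$. Indeed, fix any $i\in J$. By Proposition \ref{actionsprop}(ii), $X_i$ acts transitively on $C$. Since any $x\in X_i$ fixes $i$, the block $J^x\in\mathcal{J}$ containing $i^x=i$ coincides with $J$ (blocks of a partition being disjoint), so $X_i\leq X_J$. Hence $X_J$ is transitive on $C$, and applying $\chi$ gives $\chi(X_J)$ transitive on $\pi_J(C)$.

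For transitivity on $\pi_J(C)_1$, take two neighbours $\nu,\nu'\in\pi_J(C)_1$. By Lemma \ref{projneigh}(ii) I can lift them to elements $\mu=\nu(\alpha,j,b)$ and $\mu'=\nu(\alpha',j',b')$ of $C_1(J)$, with $\alpha,\alpha'\in C$, $j,j'\in J$, and $\pi_J(\mu)=\nu$, $\pi_J(\mu')=\nu'$. Since $C$ is $X$-neighbour transitive, there exists $x=(h_1,\ldots,h_m)\sigma\in X$ with $\mu^x=\mu'$. By the action formula \eqref{neiact2}, this equality reads $\nu(\alpha^x,j^\sigma,b^{h_j})=\nu(\alpha',j',b')$. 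Now invoke Lemma \ref{little}: cases (ii) and (iii) force $d(\alpha^x,\alpha')\leq 2$ with $\alpha^x\neq\alpha'$, contradicting $\delta\geq 3$, so case (i) holds. In particular $j^\sigma=j'$.

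The key observation, which is the one step that really uses the $X$-invariance of $\mathcal{J}$, is now at hand: $j\in J$ and $j^\sigma=j'\in J$, so $J\cap J^\sigma\neq\emptyset$. Since $\mathcal{J}$ is an $X$-invariant partition, $J^\sigma\in\mathcal{J}$, and distinct blocks are disjoint, forcing $J^\sigma=J$. Thus $x\in X_J$, the map $\chi(x)$ is well-defined, and by the definition of $\chi$ in \eqref{chimap} we have $\nu^{\chi(x)}=\pi_J(\mu)^{\chi(x)}=\pi_J(\mu^x)=\pi_J(\mu')=\nu'$. The only genuine obstacle is this step of verifying that the element $x\in X$ chosen by neighbour transitivity actually lies in $X_J$; once the $\delta\geq 3$ hypothesis pins down case (i) of Lemma \ref{little}, the partition argument is immediate.
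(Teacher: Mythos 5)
Your proposal is correct and follows essentially the same route as the paper's proof: transitivity on $\pi_J(C)$ via Proposition \ref{actionsprop}(ii) and the containment $X_i\leq X_J$ for $i\in J$, and transitivity on $\pi_J(C)_1$ by lifting neighbours with Lemma \ref{projneigh}(ii), applying neighbour transitivity of $C$, and using $\delta\geq 3$ together with Lemma \ref{little} and the $X$-invariance of $\mathcal{J}$ to force the transporting element into $X_J$. The only (harmless) omission is an explicit remark that $\chi(X_J)$ stabilises $\pi_J(C)_1$ setwise (via Lemma \ref{dpartntreq}), which your transitivity argument renders immediate.
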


\begin{proof}  Let $\alpha\in C$ and $x\in X_J$.  As $\alpha^x\in C$ it follows
  that $\pi_J(\alpha)^{\chi(x)}=\pi_J(\alpha^x)\in\pi_J(C)$.  Hence $\chi(X_J)$ is an automorphism group
  of $\pi_J(C)$.  Now, because $\delta\geq  3$, it follows from Proposition \ref{actionsprop}--(ii)
  that, for $j\in J$, $X_j$ acts transitively on $C$.  Thus, as $\mathcal{J}$ is an
  $X$-invariant partition of $M$, $X_j\leq X_{J}$, and so $X_{J}$ acts
  transitively on $C$.  From this we deduce that $\chi(X_J)$ acts transitively on $\pi_J(C)$.    

  As $\chi(X_J)$ is an automorphism group of $\pi_J(C)$, Lemma \ref{dpartntreq} implies that 
  $\chi(X_J)$ fixes setwise $\pi_J(C)_1$, the set of neighbours of $\pi_J(C)$.  Now let 
  $\nu_1,\nu_2\in\pi_{J}(C)_1$.  By Lemma \ref{projneigh}, 
  there exists $\alpha,\beta\in C$, $i,j\in J$, $b,c\in Q$ such that
  $\nu(\alpha,i,b)$, $\nu(\beta,j,c)\in C_1(J)$ and $\pi_J(\nu(\alpha,i,b))=\nu_1$, $\pi_J(\nu(\beta,j,c))=\nu_2$.  
  Since $C$ is $X$-neighbour
  transitive, there exists $x=h\sigma\in X$ such that
  $\nu(\alpha,i,b)^{h\sigma}=\nu(\beta,j,c)$.  As $\delta\geq 3$, it follows from 
  \eqref{neiact2} and Lemma \ref{little} that $i^\sigma=j$.  Therefore, because 
  $\mathcal{J}$ is an $X$-invariant partition of $M$, it follows that
  $x\in X_{J}$.  Thus \[\nu_1^{\chi(x)}=\pi_{J}(\nu(\alpha,i,b)^x)=\pi_{J}(\nu(\beta,j,c))=\nu_2.\]
  Hence $\pi_{J}(C)$ is $\chi(X_{J})$-neighbour transitive. 
\end{proof}

We saw in Lemma \ref{projneigh}--(ii) that if $\delta\geq
2$ then $\pi_J(C)_1\subseteq \pi_J(C_1(J))$.  We now investigate the
reverse inclusion, supposing that the conditions of Proposition
\ref{projcode} hold.

\begin{lemma}\label{projdistpart} Let $C$ be an $X$-neighbour transitive code with
  $\delta\geq 3$.  Moreover let $\mathcal{J}=\{J_1,\ldots,J_\ell\}$ be
  an $X$-invariant partition of $M$.  Then, for each $J\in\mathcal{J}$, either $\pi_{J}(C)$ is
  the complete code or $\pi_{J}(C)_1=\pi_{J}(C_1(J))$. 
\end{lemma}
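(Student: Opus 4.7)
The plan is to exploit the fact, already supplied by Lemma~\ref{projneigh}(ii), that $\pi_J(C)_1\subseteq \pi_J(C_1(J))$, and then to obtain the reverse inclusion under the hypothesis that $\pi_J(C)$ is not complete. The main tool is to show that both $\pi_J(C)$ and $\pi_J(C_1(J))$ are single $\chi(X_J)$-orbits, so that they must be either equal or disjoint, and to argue that equality forces $\pi_J(C)$ to be complete.

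First I would upgrade the transitivity argument used in the proof of Proposition~\ref{projcode} to the neighbour set. Let $\nu_1=\nu(\alpha_1,j_1,b_1)$ and $\nu_2=\nu(\alpha_2,j_2,b_2)$ be two elements of $C_1(J)$, with $j_1,j_2\in J$. Since $C$ is $X$-neighbour transitive there exists $x=h\sigma\in X$ with $\nu_1^x=\nu_2$; by \eqref{neiact2} this equals $\nu(\alpha_1^x,j_1^\sigma,b_1^{h_{j_1}})$. Applying Lemma~\ref{little} to the two representations and using $\delta\geq 3$ to rule out cases (ii) and (iii) (both require $d(\alpha_1^x,\alpha_2)\leq 2$), one obtains $j_1^\sigma=j_2$. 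Since $j_1\in J$ and $\mathcal{J}$ is an $X$-invariant partition, the part containing $j_2=j_1^\sigma$ must be $J^\sigma$, forcing $J^\sigma=J$ and hence $x\in X_J$. Thus $X_J$ is transitive on $C_1(J)$, and therefore $\chi(X_J)$ is transitive on $\pi_J(C_1(J))$, making it a single $\chi(X_J)$-orbit.

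By Proposition~\ref{projcode}, $\pi_J(C)$ is likewise a single $\chi(X_J)$-orbit in $H(J,q)$. Two $\chi(X_J)$-orbits are either equal or disjoint, so it remains to examine the two cases. If $\pi_J(C_1(J))\cap\pi_J(C)=\emptyset$, then for every $\nu\in C_1(J)$ the vertex $\pi_J(\nu)$ differs from $\pi_J(\alpha)$ in precisely one coordinate (namely the one at position $j\in J$ where the error was introduced), so $\pi_J(\nu)$ is at distance exactly $1$ from $\pi_J(C)$, giving $\pi_J(C_1(J))\subseteq \pi_J(C)_1$ and hence equality with Lemma~\ref{projneigh}(ii).

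The remaining case $\pi_J(C_1(J))=\pi_J(C)$ is where the work sits, but it is short. In this case, for every codeword $\alpha\in C$, every $j\in J$, and every $b\in Q\setminus\{\alpha_j\}$, the vertex $\nu(\alpha,j,b)$ lies in $C_1(J)$ (since $\delta\geq 3$ guarantees $\nu(\alpha,j,b)\notin C$), and by hypothesis its image $\pi_J(\nu(\alpha,j,b))$ lies in $\pi_J(C)$. As $j$ ranges over $J$ and $b$ over the remaining alphabet symbols, these images exhaust all neighbours of $\pi_J(\alpha)$ in $H(J,q)$. Hence $\pi_J(C)$ is closed under taking neighbours, and by connectedness of $H(J,q)$ we conclude $\pi_J(C)=V(H(J,q))$, i.e.\ $\pi_J(C)$ is the complete code. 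The step I expect to require the most care is verifying that the two single-orbit sets cannot intersect non-trivially without being equal---that is, correctly invoking the orbit dichotomy under the action of $\chi(X_J)$ and cleanly converting orbit equality into completeness via the connectedness argument.
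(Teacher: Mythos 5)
Your proof is correct, and its first half is exactly the paper's: both show that $X_J$ acts transitively on $C_1(J)$ by applying Lemma \ref{little} together with $\delta\geq 3$ to conclude $j_1^\sigma=j_2$, hence $x\in X_J$. (One small point you gloss over: to know $\pi_J(C_1(J))$ is a full $\chi(X_J)$-orbit rather than merely contained in one, you should also note that $X_J$ stabilises $C_1(J)$ setwise, which is immediate from \eqref{neiact2} since $\nu(\alpha,j,b)^{h\sigma}=\nu(\alpha^{h\sigma},j^\sigma,b^{h_j})$ with $\alpha^{h\sigma}\in C$ and $j^\sigma\in J$; in fact your argument would survive with only the weaker statement that $\pi_J(C_1(J))$ lies inside a single orbit.) Where you genuinely diverge is in the endgame. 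The paper compares $\pi_J(C)_1$ with $\pi_J(C_1(J))$: by Lemma \ref{projneigh}(ii) and Proposition \ref{projcode} the former is a $\chi(X_J)$-orbit (or empty) contained in the latter, which is a single orbit, so either the two coincide or $\pi_J(C)_1=\emptyset$, the latter meaning $\pi_J(C)$ is complete. You instead compare $\pi_J(C_1(J))$ with $\pi_J(C)$ itself, invoke the orbit dichotomy to get ``disjoint or equal'', and then convert each branch: disjointness forces every projected element of $C_1(J)$ to lie at distance exactly $1$ from $\pi_J(C)$, giving the reverse inclusion to Lemma \ref{projneigh}(ii); equality forces $\pi_J(C)$ to be closed under adjacency and hence, by connectedness of $H(J,q)$, to be the complete code. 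Both routes are sound. The paper's is slightly slicker, reducing everything to a single orbit containment; yours costs an extra connectedness step but makes more transparent why the ``complete code'' alternative arises, namely exactly when projected neighbours collapse into projected codewords.
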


\begin{proof}
Let $\nu(\alpha,j,b)\in C_1(J)$ and $x=h\sigma\in X_J$.  Then by \eqref{neiact2}, $\nu(\alpha,j,b)^x=\nu(\alpha^x,j^\sigma,b^{h_j})$.  
As both $C$ and $C_1$ are $X$-orbits, $\nu(\alpha^x,j^\sigma,b^{h_j})\in C_1$ with $\alpha^x\in C$, and because $x\in X_J$, $j^\sigma\in J$.
In particular, $\nu(\alpha^x,j^\sigma,b^{h_j})\in C_1(J)$, that is, $X_J$ stabilises $C_1(J)$.

Now let $\nu(\beta,i,c)\in C_1(J)$.  It follows from the neighbour transitivity of $C$ that there exists $x=h\sigma\in X$ such that $\nu(\alpha,j,b)^x=\nu(\beta,i,c)$.
As $\delta\geq 3$, Lemma \ref{little} implies that $j^\sigma=i$, and so $x\in X_J$.  
Hence $X_J$ acts transitively on $C_1(J)$, from which   
it naturally follows that $\pi_J(C_1(J))$ is a $\chi(X_J)$-orbit.  By Lemma~\ref{projneigh} and Proposition~\ref{projcode}, 
$\pi_J(C)_1$ is a subset of $\pi_J(C_1(J))$ that is also a $\chi(X_J)$-orbit, and so 
we deduce that either $\pi_J(C)_1=\pi_J(C_1(J))$ or $\pi_J(C)_1=\emptyset$.  The latter case holds if and only if $\pi_J(C)$ is the complete code in $H(J,q)$.  
\end{proof}

The next two results give us a lower bound on the minimum distance of a projected code of a neighbour transitive
code for which the conditions of Proposition \ref{projcode} hold.  

\begin{lemma}\label{projneigh2} Let $C$ be an $X$-neighbour transitive code with
  $\delta\geq 3$ and $\mathcal{J}=\{J_1,\ldots,J_\ell\}$ be an $X$-invariant partition of 
  $M$.  Let $J,J'\in\mathcal{J}$.  Then the action of $\chi(X_J)$ on $\pi_J(C)$ is permutationally isomorphic to
the action of $\chi(X_{J'})$ on $\pi_{J'}(C)$.  Moreover, $\delta(\pi_{J}(C))=\delta(\pi_{J'}(C))$.   
\end{lemma}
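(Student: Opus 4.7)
The plan is to exploit that $X$ permutes the blocks of $\mathcal{J}$ transitively to produce some $x\in X$ mapping $J$ onto $J'$, and then use this $x$ to transport both the point set and the group action. Since $\delta\geq 3$, Proposition~\ref{actionsprop}(i) gives that for any codeword $\alpha\in C$, $X_\alpha$ is transitive on $M$; in particular $X$ acts transitively on $M$ via $\mu$, and hence permutes the blocks of the $X$-invariant partition $\mathcal{J}$ transitively. Fix $x\in X$ with $J^{\mu(x)}=J'$; in particular $|J|=|J'|$.

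I would then define
$$\lambda\colon H(J,q)\longrightarrow H(J',q),\qquad \pi_J(\alpha)\longmapsto \pi_{J'}(\alpha^x),$$
and verify it is well-defined by an argument exactly parallel to Remark~\ref{chirem}: if $\pi_J(\alpha)=\pi_J(\beta)$ then $\alpha_j=\beta_j$ for all $j\in J$, and writing $x=h\sigma$, equation \eqref{autgpaction} gives $(\alpha^x)_{j'}=\alpha_{j'\sigma^{-1}}^{h_{j'\sigma^{-1}}}=\beta_{j'\sigma^{-1}}^{h_{j'\sigma^{-1}}}=(\beta^x)_{j'}$ for every $j'\in J'$, since $\sigma^{-1}$ maps $J'$ to $J$. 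The analogous construction using $x^{-1}$ supplies the inverse, so $\lambda$ is a bijection; since $\alpha\in C$ forces $\alpha^x\in C$, it restricts to a bijection $\pi_J(C)\to\pi_{J'}(C)$. The same coordinate-wise comparison, together with the fact that each $h_i$ is a bijection on $Q$, shows $\lambda$ preserves Hamming distance, from which $\delta(\pi_J(C))=\delta(\pi_{J'}(C))$ follows immediately.

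For the permutation-group isomorphism, note that conjugation by $x$ maps $X_J$ onto $X_{J'}$, and set
$$\varphi\colon \chi(X_J)\longrightarrow \chi(X_{J'}),\qquad \chi(y)\longmapsto \chi(x^{-1}yx).$$
The single identity
$$\chi(x^{-1}yx)\bigl(\pi_{J'}(\alpha^x)\bigr)=\pi_{J'}(\alpha^{yx})=\lambda\bigl(\pi_J(\alpha^y)\bigr)=\lambda\bigl(\chi(y)(\pi_J(\alpha))\bigr)$$
does two jobs at once. Writing any $\pi_{J'}(\beta)$ as $\pi_{J'}(\alpha^x)$ with $\alpha=\beta^{x^{-1}}$, it shows that the value of $\chi(x^{-1}yx)$ depends only on $\chi(y)$, so $\varphi$ is well-defined on $\chi(X_J)$; and it yields the permutational isomorphism condition $\lambda(\gamma^{\chi(y)})=\lambda(\gamma)^{\varphi(\chi(y))}$ directly. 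That $\varphi$ is a homomorphism follows because $\chi$ is, and the symmetric construction with $x^{-1}$ provides its inverse.

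The main obstacle I anticipate is precisely the well-definedness of $\varphi$: because the kernel of $\chi|_{X_J}$ need not be trivial, it is not automatic that conjugation by $x$ descends to a map between the images $\chi(X_J)$ and $\chi(X_{J'})$. The displayed identity above is exactly the tool that bridges this gap, and once it is in hand both the permutational isomorphism and the equality of minimum distances follow without further work.
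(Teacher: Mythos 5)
Your proof is correct and follows essentially the same route as the paper: it produces $x\in X$ with $J^{\mu(x)}=J'$ from the transitivity of $X$ on $M$, transports $\pi_J(C)$ to $\pi_{J'}(C)$ by $\pi_J(\alpha)\mapsto\pi_{J'}(\alpha^x)$, conjugates $\chi(X_J)$ to $\chi(X_{J'})$, and reads off the equality of minimum distances from the distance-preserving bijection. The only difference is that you make explicit the well-definedness of $\varphi$ on the quotient $\chi(X_J)$ via the identity $\chi(x^{-1}yx)=\lambda\circ\chi(y)\circ\lambda^{-1}$, a point the paper passes over with ``one can deduce''; this is a refinement of the same argument rather than a different approach.
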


\begin{proof}
As $C$ is $X$-neighbour transitive with $\delta\geq 3$, it follows from Proposition \ref{actionsprop}--(ii) that
there exists $y=h\sigma\in X$ such that $J^\sigma=J'$. 
Define the map $\lambda_y:\pi_J(C)\longrightarrow\pi_{J'}(C)$ given by $\pi_J(\alpha)\mapsto\pi_{J'}(\alpha^y)$. 
As $y\in\Aut(C)$, it is clear that $\lambda_y$ maps onto $\pi_{J'}(C)$. Moreover, for $\alpha,\beta\in C$,
\begin{align*}
\pi_{J'}(\alpha^y)=\pi_{J'}(\beta^y)&\iff \alpha^y|_k=\beta^y|_k&\forall k\in J'\\
&\iff \alpha_{k\sigma^{-1}}^{h_{k\sigma^{-1}}}=\alpha_{k\sigma^{-1}}^{h_{k\sigma^{-1}}}&\forall k\in J'\\
&\iff\alpha_i^{h_i}=\beta_i^{h_i}&\forall i\in J&\,\,\,\,\,\,\,\textnormal{(as $J^\sigma=J'$)}\\
&\iff \alpha_i=\beta_i&\forall i\in J\\
&\iff\pi_J(\alpha)=\pi_J(\beta),
\end{align*}
so $\lambda_y$ is a bijection from $\pi_J(C)$ to $\pi_{J'}(C)$. The map $\varphi_y:\chi(X_J)\longrightarrow\chi(X_{J'})$ 
given by $\chi(x)\mapsto\chi(y^{-1}xy)$ is an isomorphism, and one can deduce that 
$(\lambda_y,\varphi_y)$ is a permutational isomorphism from the action of $\chi(X_J)$ on $\pi_J(C)$ 
to the action of $\chi(X_{J'})$ on $\pi_{J'}(C)$.  

The argument above shows that $\alpha^y|_k=\beta^y|_k$ for $k\in J'$ if
and only if $\alpha_{k{\sigma^{-1}}}=\beta_{k{\sigma^{-1}}}$ for $k{\sigma^{-1}}\in J$.  Hence 
$d(\pi_{J}(\alpha),\pi_{J}(\beta))=d(\pi_{J'}(\alpha^y),\pi_{J'}(\beta^y))$. Now let $\delta_J=\delta(\pi_{J}(C))$ and
  $\delta_{J'}=\delta(\pi_{J'}(C))$.  By definition, there exist
  $\alpha,\beta\in C$ such
  that $d(\pi_{J}(\alpha),\pi_{J}(\beta))=\delta_J$.
  Therefore $d(\pi_{J'}(\alpha^y),\pi_{J'}(\beta^y))=\delta_J$
  and so $\delta_J\geq\delta_{J'}$.  Similarly it follows that
  $\delta_{J'}\geq \delta_J$.  Hence $\delta_J=\delta_{J'}$.
\end{proof}

\begin{corollary}\label{projcodedist}   Let $C$ be an $X$-neighbour transitive code with
  $\delta\geq 3$ and $\mathcal{J}$ be an $X$-invariant partition of 
  $M$ with $J\in\mathcal{J}$.  If $\pi_J(C)$ is not the complete code then $\delta(\pi_{J}(C))\geq 2$.
\end{corollary}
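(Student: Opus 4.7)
The plan is to argue by contradiction. Suppose $\pi_J(C)$ is not the complete code but $\delta(\pi_J(C))=1$. Then there exist codewords $\alpha,\beta\in C$ with $\pi_J(\alpha)$ and $\pi_J(\beta)$ distinct and at Hamming distance $1$ in $H(J,q)$. Let $j\in J$ be the unique entry where they disagree, so that $\alpha_j\neq\beta_j$.

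My goal is to produce a vertex that sits in both $\pi_J(C)$ and $\pi_J(C)_1$, violating the fact that the code and its neighbour set are disjoint. The natural candidate is $\pi_J(\beta)$: it lies in $\pi_J(C)$ by construction, so it suffices to show it also lies in $\pi_J(C)_1$. To this end I would consider the vertex $\nu=\nu(\alpha,j,\beta_j)\in H(m,q)$. It is at Hamming distance exactly $1$ from $\alpha$, so since $\delta\geq 3$ it cannot be a codeword and therefore must belong to $C_1$. Because $j\in J$ this places $\nu$ in the set $C_1(J)$, and by construction $\pi_J(\nu)=\pi_J(\beta)$, so $\pi_J(\beta)\in \pi_J(C_1(J))$.

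At this point I would invoke Lemma~\ref{projdistpart}: under the hypothesis that $\pi_J(C)$ is not the complete code, the lemma yields $\pi_J(C)_1=\pi_J(C_1(J))$. Thus $\pi_J(\beta)\in\pi_J(C)_1$. Combined with $\pi_J(\beta)\in \pi_J(C)$ this contradicts the disjointness of a code and its neighbour set. Hence no such $\beta$ exists and $\delta(\pi_J(C))\geq 2$.

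The only delicate point is making sure the contradiction actually uses the bound $\delta\geq 3$ (and not just $\delta\geq 2$); this enters precisely when we rule out $\nu\in C$, because only $\delta\geq 2$ is needed to get $\nu\in C_1$ once we know $\nu\neq\alpha$, but we really want $d(\alpha,\nu)=1$ to be smaller than the minimum distance, which is guaranteed by $\delta\geq 3$ (in fact $\delta\geq 2$ already suffices, but the framework of Lemma~\ref{projdistpart} and Proposition~\ref{projcode} genuinely needs $\delta\geq 3$). No substantial obstacle is expected, as the corollary is essentially a packaging of Lemma~\ref{projdistpart} together with the elementary observation above.
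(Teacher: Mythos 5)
Your proof is correct and follows essentially the same route as the paper's: both construct the vertex $\nu(\alpha,j,\beta_j)\in C_1(J)$ from a hypothetical pair of codewords whose projections are at distance $1$, and both derive the contradiction $\pi_J(\beta)\in\pi_J(C)\cap\pi_J(C)_1$ via Lemma~\ref{projdistpart}. Your side remark about where $\delta\geq 3$ versus $\delta\geq 2$ is genuinely needed is accurate but not required.
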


\begin{proof}  Suppose $\delta(\pi_{J}(C))=1$.  Then there
  exist $\alpha,\beta\in C$ such that $d(\pi_{J}(\alpha),\pi_{J}(\beta))=1$.  In particular, 
  there exists $k\in J$ such that $\alpha_k\neq\beta_k$ and
  $\alpha_j=\beta_j$ for all $j\in J\backslash\{k\}$.  Let $\nu=\nu(\alpha,k,\beta_k)$, which because $\delta\geq 3$ and $k\in J$ is an element of $C_1(J)$.  If $\pi_J(C)$ is not the complete code then Lemma \ref{projdistpart} implies that
  $\pi_{J}(\nu)\in\pi_{J}(C)_1$.  However
  $\pi_{J}(\nu)=\pi_{J}(\beta)\in \pi_{J}(C)$ which is a
  contradiction, so $\delta(\pi_{J}(C))\geq 2$.      
\end{proof}

\section{Examples of neighbour transitive codes}\label{secntrex}
We now give some further examples of neighbour transitive codes.  

\begin{example}\label{injex} Let $m<q$ and \[C=\{(\alpha_1,\ldots,\alpha_m)\in
  H(m,q)\,:\,\textnormal{$\alpha_i\neq\alpha_j$ for $i\neq j$}\}.\]
 Then $C$ has minimum distance $\delta=1$ and covering radius $\rho=m-1$.\end{example}
\begin{example}\label{weightex} If $0$ is a distinguished letter of $Q$, the \emph{weight} of a vertex $\alpha$ is the number of entries
not equal to $0$.  Let $m\geq 3$ be odd and $Q=\{0,1\}$.  Define \[C=\{\alpha\in
  H(m,2)\,:\,\textnormal{$\alpha$ has weight $\frac{m-1}{2}$ or weight
  $\frac{m+1}{2}$}\}.\] Then $C$ has minimum distance $\delta=1$ and covering radius $\rho=(m-1)/2$.\end{example}
\begin{example}\label{allex} Let $m=pq$ for some positive integer $p$, $Q=\{a_1,\ldots,a_q\}$,
  and $\alpha=(a_1^p,a_2^p,\ldots,a_q^p)$.  We define \[\All(pq,q)=\alpha^{L},\]
  the orbit of $\alpha$ under the top group of $\Aut(\Gamma)$.  The code $\All(pq,q)$ has minimum distance $\delta=2$ and covering
  radius $\rho=p(q-1)$. 
\end{example}
In \cite{diagnt} the authors proved that, along with the repetition code $\Rep(m,q)$, each of the above examples 
is neighbour transitive with $\Aut(C)=\Diag_m(S_q)\rtimes L$,
hence also diagonally neighbour transitive.  With $X=\Aut(C)$ it follows that $X^M=S_m$, $K=X\cap B=\Diag_m(S_q)$ and 
$X_1^Q=S_q$ in each case. Thus the code in Example \ref{weightex} is alphabet affine neighbour transitive, where as if $q\geq 5$, 
the codes in the remaining examples are alphabet almost simple neighbour transitive.  However, 
$\Rep(m,q)$ with $m\geq 3$ is the only code among these examples that has minimum distance $\delta\geq 3$.  We observe in this
case that $\soc(K)=\Diag_m(A_q)$ is transitive on $\Rep(m,q)$.  (Recall that $\soc(G)$, the socle of a group $G$, is the group generated
by the minimal normal subgroups of $G$.)

In the same paper the authors proved that any diagonally neighbour transitive code $C$ in $H(m,q)$  is either one 
of the codes in Examples \ref{repex}, \ref{injex}, \ref{weightex}, or $m=pq$ for some positive integer $p$ and $C$ is contained in $\All(pq,q)$, that is,
$C$ is a frequency permutation array.  Recall from Section \ref{powerline} that frequency permutation 
arrays have been studied recently due to possible applications to powerline communication,
with particular interest in permutation codes, the case where $p=1$.  Such codes give rise to further examples of neighbour 
transitive codes. 

\begin{example}\label{perm1}
To describe permutation codes we identify the alphabet $Q$ with the
set $\{1,\ldots,q\}$ and consider codes in the Hamming graph $\Gamma=H(q,q)$.  For $g\in S_q$ 
we define the vertex \[\alpha(g)=(1^g,\ldots,q^g)\in V(\Gamma).\]  
For $y\in S_q$ we let $x_y=(y,\ldots,y)\in B\cong S_q^q$, and we let $\sigma(y)$ denote the
automorphism induced by $y$ in $L\cong S_q$. It is known (see \cite[Lem. 8]{diagnt}) that for all $g,y\in S_q$, 
\[\alpha(g)^{x_y}=\alpha(gy)\textnormal{ and }\alpha(g)^{\sigma(y)}=\alpha(y^{-1}g).\]
Now, for a subset 
$T\subseteq S_q$, we define the \emph{permutation code generated by $T$} to be
the code \begin{equation}\label{ct}C(T)=\{\alpha(g)\in V(\Gamma)\,:\,g\in T\}.\end{equation} 
In \cite[Lem.~9 and Rem.~2]{diagnt} the authors showed that $C(T)$ is a neighbour transitive code with $\delta=2$ 
if and only if $T=S_q$.  Thus, if $T\neq S_q$, it holds that any neighbour transitive $C(T)$ has minimum distance $\delta\geq 3$.  
In the same paper the authors also proved that if $T$ is a subgroup of $S_q$ then $C(T)$ is diagonally
neighbour transitive if and only if $N_{S_q}(T)$ is $2$-transitive \cite[Thm.~2]{diagnt}. 
In particular, let \[A(T)=\{a_t=x_t\sigma(t)\,:\,t\in N_{S_q}(T)\}\]
and $X=\langle \Diag_q(T),A(T)\rangle$. It is shown in \cite[proof of Thm.~2]{diagnt} that, if $N_{S_q}(T)$ is $2$-transitive, 
then $C(T)$ is $X$-neighbour transitive.

Suppose now that $N_{S_q}(T)$ is $2$-transitive.  Then the intersection $K$ of the group $X$ with the base group $B$ is $K=\Diag_q(T)$, and so $\soc(K)=\Diag_q(\soc(T))$. It is clear
that $K$ is a normal subgroup of $X_1$, so $T\cong K^Q\unlhd X_1^Q$, and $X_1^Q\leq N_{S_q}(T)$.
Thus, if $T\neq S_q$ and $N_{S_q}(T)$ is $2$-transitive of almost simple type, then $C(T)$ and $X$ satisfy the conditions of Theorem \ref{main}.

In this case, if $T$ is simple then $\soc(K)$ acts transitively on $C(T)$.  
If $T$ is not simple then $\Delta=\alpha(1)^{\soc(K)}=C(\soc(T))$ is properly contained in $C(T)$.  Let $\mathcal{T}$ be a
transversal for $\soc(T)$ in $T$.  Then it follows that 
\[C(T)=\dot\bigcup_{t\in\mathcal{T}}\Delta^{x_t}=\dot\bigcup_{t\in\mathcal{T}} C(\soc(T)t).\]  
Note that if $T=S_q$, $C(T)$ is alphabet almost simple neighbour transitive and $C(T)$ is the disjoint union of two 
$\soc(K)$ orbits, but because $\delta=2$ the conditions of Theorem \ref{main} are not met.    
\end{example}

\begin{example} As in Example \ref{perm1}, identify $Q$ with the set $\{1,\ldots,q\}$.  The following
lemma proves that for any regular subgroup $T$ of $S_q$, the code $C(T)$ from \eqref{ct} is equivalent to $\Rep(q,q)$, and hence
neighbour transitive by Lemma \ref{dpartntreq}.

\begin{lemma}\label{regiffrep} Let $T$ be a non-trivial subgroup of $S_q$, and let $C(T)$ be as \eqref{ct} with minimum distance $\delta$. 
Then (i) $\delta=q$ if and only if $T$ acts semi-regularly on $Q$; and
(ii) $C(T)$ is equivalent to $\Rep(q,q)$ if and only if $T$ acts regularly on $Q$.
\end{lemma}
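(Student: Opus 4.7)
The plan is to translate the condition on the minimum distance into a condition on supports of elements of $T$. First I would observe that the distance between two codewords of $C(T)$ has a clean group-theoretic description: for $g, h \in T$,
\[
d(\alpha(g), \alpha(h)) = |\{i \in Q : i^g \neq i^h\}| = |\supp(gh^{-1})|,
\]
since $i^g = i^h$ exactly when $gh^{-1}$ fixes $i$. The map $g \mapsto \alpha(g)$ is plainly injective, so $|C(T)| = |T| \geq 2$ because $T$ is non-trivial.

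Part (i) falls out immediately. For the forward direction, if $\delta = q$, apply the identity above with $g = 1$ and $h$ any non-identity element of $T$: then $|\supp(h)| = q$, so every non-identity element of $T$ is fixed-point-free, i.e.\ $T$ acts semi-regularly. Conversely, if $T$ is semi-regular then $gh^{-1}$ is fixed-point-free for all distinct $g, h \in T$, so every pair of codewords is at distance $q$, giving $\delta = q$.

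For part (ii), the main external input is Lemma \ref{cardc=q}. If $T$ acts regularly, then $T$ is in particular semi-regular, so by (i) $\delta(C(T)) = q$, and $|C(T)| = |T| = q = |\Rep(q,q)|$; Lemma \ref{cardc=q} then produces a code $C'$ equivalent to $C(T)$ with $C' \subseteq \Rep(q,q)$, and cardinality forces $C' = \Rep(q,q)$. Conversely, if $C(T)$ is equivalent to $\Rep(q,q)$, equivalence preserves both size and minimum distance, so $|T| = q$ and $\delta = q$; part (i) then gives semi-regularity of $T$ on $Q$, and a semi-regular action of a group of order $|Q|$ on $Q$ must be transitive (every orbit has size $|T| = q$, so there is only one), hence regular.

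I do not expect any serious obstacle: the content of the lemma is essentially the dictionary between the combinatorial property ``two codewords differ in every coordinate'' and the group-theoretic property ``the quotient element has no fixed point'', combined with the cardinality count supplied by Lemma \ref{cardc=q}. The only thing to be careful about is ensuring that one is allowed to invoke Lemma \ref{cardc=q}, which requires $|C(T)| \geq 2$ and $\delta = m = q$; both are guaranteed in the relevant case by the hypothesis that $T$ is non-trivial and by part (i).
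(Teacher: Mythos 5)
Your proposal is correct and follows essentially the same route as the paper: the identity $d(\alpha(g),\alpha(h))=|\{i: i^{gh^{-1}}\neq i\}|$ is exactly the pointwise observation the paper uses for part (i), and part (ii) is handled in both cases by combining part (i) with Lemma \ref{cardc=q} and the cardinality count $|C(T)|=|T|=q$ (the paper invokes the orbit--stabiliser theorem where you count orbits of a semi-regular action, which is the same argument). No gaps.
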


\begin{proof} (i) Assume that $\delta=q$. Suppose
  there exists $i\in Q$ such that the stabiliser $T_i\neq 1$.  Then
  there exists $1\neq g\in T_i$ such that $\alpha(1)|_i=\alpha(g)|_i=i$. In particular, 
  $d(\alpha(1),\alpha(g))\leq q-1$, contradicting the fact that
  $\delta=q$.  Therefore $T$ acts semi-regularly on $Q$.  Now assume
  that $T_i=1$ for all $i\in Q$.  Consider the distinct vertices
  $\alpha(g_1),\alpha(g_2)\in C(T)$ such that
  $d(\alpha(g_1),\alpha(g_2))=\delta$. If $\delta<q$ then $\alpha(g_1)|_i=\alpha(g_2)|_i$ for some 
  $i\in Q$, and so $i^{g_1}=i^{g_2}$.  Thus $i^{g_1g_2^{-1}}=i$,
  contradicting the fact that $T_i=1$ for all $i\in Q$.   

 (ii) Suppose that $C(T)$ is equivalent $\Rep(q,q)$. Then $C(T)$ has minimum distance $\delta=q$, so by (i), 
 $T_i=1$ for all $i\in Q$. Furthermore $|T|=|C(T)|=|\Rep(q,q)|=|Q|=q$.  Thus, the orbit stabiliser theorem
  implies that $T$ acts transitively on $Q$.  Hence $T$ acts regularly on $Q$.    
  Conversely suppose that $T$ acts regularly on $Q$.  Then by (i), $C(T)$ has minimum distance $\delta=q$.  Hence by Lemma \ref{cardc=q},
 $C(T)$ is equivalent to a code $C'\subseteq\Rep(q,q)$.  However, as $T$ acts regularly $Q$, it follows that $q=|T|=|C(T)|=|C'|$, so $C'=\Rep(q,q)$.
\end{proof}

\end{example}

\begin{example}\label{fgex} Let $G$ be a finite group of order $q$ and $\mathfrak{o}=(g_1,\ldots,g_q)$ be a fixed ordering of the elements of $G$.
Recall from Section \ref{fgsec} the code $C_{\mathfrak{o}}(G)$ in $\Gamma=H(q,q)$, the permutation code of $G$ with respect to $\mathfrak{o}$. 
In the definition of $C_{\mathfrak{o}}(G)$, we identify the identity element of $G$ with the vertex in $H(q,q)$ associated with the ordering $\mathfrak{o}$,
namely we define $\alpha_{\mathfrak{o}}(1)=(g_1,\ldots,g_q)\in H(q,q)$.
For another ordering $\mathfrak{o}'=(g_1',\ldots,g_q')$ of the elements of $G$, there exists $\sigma\in L\cong S_G$ (the top group of $\Aut(H(q,q))$) such that 
$\alpha_{\mathfrak{o}}(1)^\sigma=(g_{1\sigma^{-1}},\ldots,g_{q\sigma^{-1}})=(g_1',\ldots,g_q')=\alpha_{\mathfrak{o}'}(1)$ (see \eqref{autgpaction} for the
action of $L$ on vertices).
For any $g\in G$, we have 
\[\alpha_{\mathfrak{o}}(g)^\sigma=(g_1g,\ldots,g_qg)^\sigma=(g_1'g,\ldots,g_q'g)=\alpha_{\mathfrak{o}'}(g),\]
and so $C_{\mathfrak{o}}(G)^\sigma=C_{\mathfrak{o}'}(G)$. Thus, permutation codes of $G$ with
respect to different orderings of $G$ are equivalent, and we may therefore talk of $C(G)$ as
\emph{the} permutation code of $G$. We can now prove Theorem \ref{fgntthm}, which
states that $C(G)$ is $(S_G\times S_G)$-neighbour transitive.

\medskip\noindent\emph{Proof of Theorem~$\ref{fgntthm}$.}\quad Up to equivalence, the permutation code of $G$ is the code $C(r(G))$, where $r(G)\leq S_G$ is
the right regular representation of $G$.  As $r(G)$ acts regularly on $G$, it follows from Lemma \ref{regiffrep}--(ii) that $C(G)$ is equivalent
to $\Rep(q,q)$.  In the discussion following Example \ref{allex}, we saw that $\Rep(q,q)$ is $(\Diag_q(S_G)\rtimes L)$-neighbour transitive. Because
$\Diag_q(S_G)\rtimes L\cong S_G\times S_G$, the fact that $C(G)$ is $(S_G\times S_G)$-neighbour transitive follows from Lemma \ref{dpartntreq}.
\end{example}

\begin{table}[t]
\centering
\begin{tabular}{c c c c c}
\hline\hline
Line &$Degree=q$ & $T$ & Conditions & $\delta(C(T,T^\tau))$\\
\hline
1&6&$A_6\unlhd T\leq S_6 $&--& 8\\
2&11&$\PSL(2,11)$&--& 16\\
3&12&$M_{12}$&--& 16\\
4&15&$A_7$&--& 24\\
5&176&$HS$&--& 320\\
6&$(r^n-1)/(r-1)$&$\PSL(n,r)\unlhd T\leq \PGaL(n,r)$&$n>2$&$>\sqrt{q}-1$\\
\hline
\end{tabular}
\caption{$2$-transitive almost simple groups with two inequivalent actions.}\label{table1}
\end{table}

\begin{example}\label{perm2} Let $T$ be an almost simple $2$-transitive permutation group of degree $q$, with
socle $S$ (so $T\leq N_{S_q}(S)$), such that $N_{S_q}(S)$ is a proper subgroup of $\Aut(S)$.
By the classification of finite $2$-transitive groups, $T$, $q$ are as in one of the lines of Table \ref{table1} and $N_{S_q}(S)$ is a subgroup
of index $2$ in $\Aut(S)$ (see, for example, \cite[Table 7.4]{camperm}).
Moreover, there exists an outer automorphism $\tau$ of $T$ such that $\tau^2=1$.  For $t\in T$, we consider
the ordered pair \[\alpha(t,t^\tau)=(\alpha(t),\alpha(t^\tau))=(1^t,\ldots,q^t,1^{t^\tau},\ldots,q^{t^\tau}),\] and the code 
\[C(T,T^\tau)=\{\alpha(t,t^\tau)\,:\,t\in T\},\] which is a code in $\Gamma^2$ where $\Gamma=H(q,q)$.  
Let $(\alpha,\beta)$ be a general element of $\Gamma^2$ and let $\sigma$ be the automorphism in the top group 
that maps $(\alpha,\beta)$ to $(\beta,\alpha)$.  
Also let \[\Diag_m(T,T^\tau)=\{(x_t,x_{t^\tau})\in \Diag_m(T)^2\}\]
and \[A(T,T^\tau)=\{(a_t,a_{t^\tau})\in A(T)^2\},\] acting on $\Gamma^2$ as in \eqref{prodact}.
By letting $X=\langle \Diag_m(T,T^\tau), A(T,T^\tau), \sigma\rangle$, it follows that $C(T,T^\tau)$ is $X$-neighbour 
transitive \cite[Thm. 4.2]{twisted}.  

In this case, $K=\Diag_q(T,T^\tau)$ and $\soc(K)=\Diag_q(\soc(T),\soc(T)^\tau)$.
As in Example \ref{perm1}, we have that $T\cong K^Q\unlhd X_1^Q\leq N_{S_q}(T)$.
Moreover, $N_{S_q}(T)$ is $2$-transitive of almost simple type for each of the groups in Table \ref{table1}.
Therefore, if $C(T,T^\tau)$ has minimum distance at least $3$, then the code along with the group $X$
satisfy the conditions of Theorem \ref{main}.  Let us consider the minimum distance of $C(T,T^\tau)$.  

For $T$ as in line $1$ or $3$ of Table \ref{table1}, the minimum distance of $C(T,T^\tau)$
is given in \cite{twisted}. For $T$ as in lines $2$, $4$--$6$ of Table \ref{table1},
\[\delta(C(T,T^\tau))=2\times \delta(C(T)),\]
and the minimum distance of $C(T)$ is equal to the minimal degree of $T$ \cite[Sec.~4.4 \& 4.5]{twisted}.  
Consequently, the lower bound for the minimum distance given in line $6$ of Table \ref{table1} follows from the fact
that the minimal degree of a primitive permutation group of degree $q$ that does not contain $A_q$ is greater than 
$\frac{1}{2}(\sqrt{q}-1)$ \cite[Thm. 6.14]{babai}.  Therefore, it follows from Table \ref{table1} that $C(T,T^\tau)$ has minimum distance 
at least $3$, the only possible exception being the group $T$ in line $6$ with $(n,r)=(3,2)$.
However, it is straightforward to verify, using GAP \cite{gap}, that the minimal degree of $\PSL(3,2)$ is $4$, and so 
$\delta(C(T,T^\tau))=8$ in this case. 
This confirms that $C(T,T^\tau)$ and $X$ satisfy the conditions of Theorem \ref{main}.

For the groups $T$ in Table \ref{table1} that are simple, it holds that $\soc(K)$ acts transitively on $C(T,T^\tau)$.  If $T=S_6$
or $T$ is as in line $6$ of Table \ref{table1} with $\PSL(n,r)$ a proper subgroup of $T$, then $\Delta=\alpha(1,1)^{\soc(K)}$ is properly
contained in $C(T,T^\tau)$.  It then follows that \[C(T,T^\tau)=\dot\bigcup_{t\in\mathcal{T}}\Delta^{(x_t,x_{t\tau})}=
\dot\bigcup_{t\in\mathcal{T}} C(\soc(T)t,(\soc(T)t)^\tau)\] for a transversal $\mathcal{T}$ of $\soc(T)$ in $T$.
\end{example}

\section{The Structure of $\soc(K)$}\label{secstruct}
Let $C$ be an $X$-neighbour transitive code with $\delta\geq 3$ and $K:=X\cap B\neq 1$, and let 
\[\hat{Y}:=\varphi_1(X_1)=X_1^Q,\] where $\varphi_1$ is as in \eqref{varhom}.  
Recall from Proposition \ref{actionsprop} that $\hat{Y}$ acts $2$-transitively on the alphabet
$Q$, and is of almost simple type or of affine type \cite[Thm. 4.1B]{dixonmort}.  In particular, $\hat{Y}$ has a unique
minimal normal subgroup $T$ which is either non-abelian simple or elementary abelian.  
For the rest of this paper we assume that $T$ is non-abelian simple, so $\hat{Y}$ is of almost simple type.  
In this section we determine the structure of the socle of $K$, 
which we denote by $\soc(K)$. In particular, we prove, up to equivalence, 
that $\soc(K)$ is a sub-direct product of $T^m$, which then enables us to use Scott's Lemma \cite{scotts}
to determine the structure of $\soc(K)$ more explicitly.

First we introduce the following theorem, which was proved by the second
author with Schneider \cite[Theorem 1.1--(a)]{embeddingpap}.  The reader should note that we have
modified the notation of the original statement to suit our purposes.     

\begin{theorem}\label{embedding} Let $X\leq\Aut(\Gamma)=B\rtimes L$ and $\alpha\in H(m,q)$.
  Suppose that $X^M$ is transitive on $M$.  Then there exists
  $y\in B$ such  that $X^y\leq X_1^Q\wr X^M$.  Moreover, if $X_1^Q$
  is transitive on $Q$, $y$ may chosen so that $\alpha^y=\alpha$.    
\end{theorem}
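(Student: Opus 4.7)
The plan is to exploit the transitivity of $X^M$ on $M$ to pick coset representatives whose base-group components determine the conjugating element $y\in B$. The guiding idea is the classical embedding of a transitive permutation group into the wreath product of a point-stabiliser and its top action: any $x\in X$ with top action $\sigma$ is determined, for each $i\in M$, by the element of $X_1$ obtained by transporting $x$ back to fix $1$ via fixed representatives. Conjugating $X$ by a suitable $y\in B$ makes this transport visible on the level of base-group components, so that every base component of every conjugated element lands in $X_1^Q$.

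To carry this out, I would first use transitivity of $X^M$ on $M$ to choose, for each $i\in M$, an element $t_i\in X$ with $1^{\mu(t_i)}=i$, taking $t_1=1$. Writing $t_i=h^{(i)}\sigma_i$ with $h^{(i)}=(h^{(i)}_1,\ldots,h^{(i)}_m)\in B$ and $\sigma_i\in L$, I would define $y=(y_1,\ldots,y_m)\in B$ by letting $y_i$ be $h^{(i)}_i$ (up to an inverse, depending on multiplication conventions in $B\rtimes L$). Then, for any $x=h\sigma\in X$ and any $i\in M$, set $j=i^\sigma$. Since $1\xrightarrow{\sigma_i}i\xrightarrow{\sigma}j\xrightarrow{\sigma_j^{-1}}1$, the element $t_j^{-1}xt_i$ lies in $X_1$, so by definition of $\varphi_1$ its first base component lies in $X_1^Q$. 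A direct calculation in the semi-direct product $B\rtimes L$ shows that this first component coincides with the $i$-th base component of $y^{-1}xy$. As $i$ was arbitrary and the top action of $y^{-1}xy$ equals $\sigma\in X^M$, we obtain $y^{-1}xy\in (X_1^Q)^m\rtimes X^M=X_1^Q\wr X^M$, hence $X^y\leq X_1^Q\wr X^M$.

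For the second assertion, suppose $X_1^Q$ acts transitively on $Q$. I would then replace $y$ by $yz$ for a suitably chosen $z=(z_1,\ldots,z_m)\in (X_1^Q)^m$. Since $X_1^Q\wr X^M$ contains $(X_1^Q)^m$ and is closed under conjugation by its own elements, we still have $X^{yz}\leq X_1^Q\wr X^M$. By transitivity of $X_1^Q$ on $Q$, for each $i$ there exists $z_i\in X_1^Q$ mapping $\alpha_i^{y_i}$ to $\alpha_i$; then $\alpha_i^{y_iz_i}=\alpha_i$ for every $i$, so $\alpha^{yz}=\alpha$, as required.

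The hard part will be the bookkeeping with the action of $L$ on $B$ in the semi-direct product: pinning down the exact formulas for how $\sigma$ permutes the components of $y$ under conjugation is what makes the identification of the $i$-th component of $y^{-1}xy$ with the first component of $t_j^{-1}xt_i$ go through cleanly. Once these formulas are settled the verification is essentially mechanical, and the moreover part follows with no extra difficulty.
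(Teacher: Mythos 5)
This theorem is not proved in the paper at all: it is quoted from Praeger and Schneider \cite{embeddingpap}, Theorem~1.1(a), so there is no in-paper proof to compare against. Your argument is the standard coset-representative embedding of a transitive group into a wreath product, which is essentially the argument of that reference, and it is sound. One small correction to the bookkeeping you deferred: with the paper's convention $\alpha^{h\sigma}|_j=\alpha_{j\sigma^{-1}}^{h_{j\sigma^{-1}}}$, the alphabet permutation that $t_i=h^{(i)}\sigma_i$ applies while transporting position $1$ to position $i=1^{\sigma_i}$ is the \emph{first} component $h^{(i)}_1$, not the $i$-th; so you should take $y_i=(h^{(i)}_1)^{-1}$ (with $y_1=1$), after which the $i$-th base component of $y^{-1}xy$ is $h^{(i)}_1h_i(h^{(j)}_1)^{-1}$, which is exactly the first base component of $t_ixt_j^{-1}\in X_1$, and the rest of your argument, including the adjustment by $z\in(X_1^Q)^m$ for the ``moreover'' clause, goes through as written.
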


\noindent It is a consequence of Proposition \ref{actionsprop} and Theorem \ref{embedding} that, for our $X$-neighbour transitive code $C$, 
there exists $y\in \Aut(\Gamma)$ such that $X^y\leq\hat{Y}\wr L$.  Therefore, by replacing $C$ with an equivalent code if necessary, Lemma \ref{dpartntreq} 
allows us to assume that $C$ is $X$-neighbour transitive with $X\leq\hat{Y}\wr L$ and $K:=X\cap \hat{Y}^m\neq 1$. 

Since $K$ is contained in the base group of $\Aut(\Gamma)$ it follows that $K\leq
X_i$ for each $i$.  We also note that \[K\leq \Pi_{i=1}^m K_i^Q\] where
$K_i^Q:=\varphi_i(K)\leq \hat{Y}$.  Now, because $K$ is a normal subgroup of $X$, and because $X$
acts transitively on $M$, we can prove that each $K_i^Q$ is conjugate to $K_1^Q$ in $\hat{Y}$.  
Thus there exists $x=(g_1,\ldots,g_m)\in\hat{Y}^m$ where each $g_i$ conjugates $K_i^Q$ to $K_1^Q$.  
By replacing $C$ by $C^x$ if necessary, Lemma \ref{dpartntreq} allows us to assume without loss of
generality that \[K\leq Y^m,\] where $Y=K_1^Q$.  If $Y=1$ then $K=1$, contradicting our assumption.  Therefore, because $K$ is a normal 
subgroup of $X_1$, it follows that $Y$ is a non-trivial normal subgroup of $\hat{Y}$, and so, as $\hat{Y}$ is almost simple, $T$ is also
the unique minimal normal subgroup of $Y$.  

\begin{proposition}\label{subdir} Let $G=Y^m$ so that $K\leq G$ as above, with $T$ the unique minimal normal subgroup of $Y$.  
Then $\soc(K)$ is a normal subgroup of $X$, and a sub-direct subgroup of $\soc(G)=T^m$.  
\end{proposition}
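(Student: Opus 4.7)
The plan is to prove two things: first that $\soc(K)\trianglelefteq X$, and second that $\soc(K)\leq T^m$ with $\pi_i(\soc(K))=T$ for every $i\in M$, where $\pi_i\colon Y^m\to Y$ denotes the $i$-th coordinate projection. Normality of $\soc(K)$ in $X$ is immediate: since $B\trianglelefteq\Aut(\Gamma)$, the subgroup $K=X\cap B$ is normal in $X$, and $\soc(K)$ is characteristic in $K$.

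For the sub-direct property, I would start by letting $N$ be an arbitrary minimal normal subgroup of $K$ and setting $N_i=\pi_i(N)$. A routine lifting argument — for each $y\in Y$, pick $k\in K$ with $\pi_i(k)=y$ and apply $\pi_i$ to $k^{-1}Nk=N$ — shows $N_i\trianglelefteq Y$.

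The heart of the proof is establishing $N_i\in\{1,T\}$. Assume $N_i\neq 1$. First, $C_Y(T)=1$: if it were nontrivial, it would be a normal subgroup of $Y$ and hence would contain $T$ by uniqueness of the minimal normal, forcing $T$ to be abelian. Next, $N_i\cap T\trianglelefteq T$ is $1$ or $T$ by simplicity of $T$; if it were $1$, then $[N_i,T]\leq N_i\cap T=1$ would give $N_i\leq C_Y(T)=1$, a contradiction. So $T\leq N_i$. Now, $N$ being characteristically simple gives $N\cong T_0^k$ for some simple group $T_0$, and quotients of $T_0^k$ have the shape $T_0^j$; since $N_i$ contains the non-abelian $T$, $T_0$ must be non-abelian simple. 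Finally, the conjugation action of $N_i$ on $T$ has kernel $C_{N_i}(T)\leq C_Y(T)=1$, embedding $N_i$ into $\Aut(T)$ above $\Inn(T)\cong T$; thus $N_i$ is almost simple with socle $T$. Since $T_0^j$ can be almost simple only when $j=1$, we conclude $N_i=T$.

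Consequently every minimal normal $N$ of $K$ lies in $T^m$, so $\soc(K)\leq T^m$. To finish, note that $X$ permutes the minimal normals of $K$ by conjugation: for $x=g\sigma\in X$, the formula $\pi_i(N^x)=g_{i\sigma^{-1}}^{-1}N_{i\sigma^{-1}}g_{i\sigma^{-1}}$, together with $T\trianglelefteq\hat{Y}$, shows that the support $\{i:\pi_i(N^x)=T\}$ equals $J^\sigma$, where $J=\{i:N_i=T\}$ is non-empty. Because $X^M$ acts transitively on $M$, the union $\bigcup_{\sigma\in X^M}J^\sigma$ is all of $M$, so for every $i$ some $X$-conjugate of $N$ projects onto $T$ at position $i$; this gives $\pi_i(\soc(K))=T$ as required. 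The main obstacle is the casework for $N_i\in\{1,T\}$, which hinges on combining the unique minimal normal and trivial-centralizer properties of $Y$ with the characteristic-simplicity of $N$.
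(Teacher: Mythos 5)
Your argument is correct, and for the key containment $\soc(K)\leq T^m$ it takes a genuinely different route from the paper's. The paper first computes $\soc(G)=T^m$ by invoking Schreier's conjecture (so that $G/T^m$ is soluble), then shows that some single projection $\varphi_j$ is injective on a minimal normal subgroup $R$ of $K$, deduces $T\leq\varphi_j(R)$, and uses the characteristic-subgroup trick ($\soc(R)$ characteristic in $R\unlhd K$) to force $R=\soc(R)\cong T$ to be simple, whence $R\leq T^m$ because any non-abelian simple subgroup of $G$ lies in $T^m$ (Schreier again). You instead analyse all the projections $N_i$ at once: $C_Y(T)=1$ forces $T\leq N_i$ whenever $N_i\neq 1$, the faithful conjugation action on $T$ embeds $N_i$ into $\Aut(T)$ above $\Inn(T)$ so that $N_i$ is almost simple with socle $T$, and comparing with $N_i\cong T_0^j$ pins down $N_i=T$. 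This avoids Schreier's conjecture entirely and does not require first proving that the minimal normal subgroups of $K$ are simple; your closing transitivity argument and the normality of $\soc(K)$ in $X$ coincide with the paper's. The one thing you omit is the identification $\soc(G)=T^m$ itself, which is asserted in the statement (and quoted later in the paper): you prove $\soc(K)$ is subdirect in $T^m$ without verifying that $T^m$ is the socle of $G$. That is precisely the point where the paper needs Schreier's conjecture, so to recover the full statement you should add that $T^m\leq\soc(G)$ is clear and that any further minimal normal subgroup of $G$ would embed in the soluble group $G/T^m$, forcing a soluble non-trivial normal subgroup of the almost simple group $Y$ under some $\varphi_i$, a contradiction.
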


\begin{proof} We first prove that $\soc(G)=T^m$.  Let $H=T^m$, and for each $i\in M$ let $T_i=\{(1,\ldots,1,t,1,\ldots,1)\in
  G\,:\,t\in T\}$ where the non-trivial elements appear in the
  $i^{th}$ entry.  Thus we have $T_i\leq H$ and $H=\langle T_i\,:\,i\in M\rangle$.
  Since $T$ is a minimal normal subgroup of $Y$ it follows that, for 
  each $i$, $T_i$ is a minimal normal subgroup of $G$.  Thus
  $H\leq\soc(G)$.  Now let $R$ be any minimal normal subgroup of $G$.
  Since $H$ and $R$ are both normal in $G$ it follows that $R\cap H$
  is normal in $G$, and since $R$ is a minimal normal subgroup of $G$
  then either $R\cap H=1$ or $R\cap H=R$.  

  Suppose that $R\cap H=1$. Then, by the second isomorphism theorem, $R\cong RH/H\leq G/H$. 
  Schreier's Conjecture (which is known to be true by the classification of finite simple groups) states
 that $\Aut(T)/\Inn(T)\cong\Aut(T)/T$ is soluble.  Thus it follows that $Y/T$ is soluble, and therefore
 $(Y/T)^m\cong Y^m/T^m=G/H$ is soluble.  Consequently $R$ is soluble.  Since $R\neq 1$ there
  exists $i$ such that $1\neq \varphi_i(R)\unlhd Y$.  However, since $R$ is
  soluble it follows that $\varphi_i(R)$ is soluble which is a
  contradiction as $Y$ is almost simple.  Thus $R\leq H$ and so $H=\soc(G)$.  We also note 
  that following a similar argument proves that any non-abelian simple subgroup of $G$ is necessarily a subgroup of $H=\soc(G)$.  

  Now let $R$ be a minimal normal subgroup of $K$. We claim that $R\cong T$.  
  To prove this claim, we first consider the homomorphism $\varphi_i:K\longrightarrow S_q$
  defined in (\ref{varhom}).  As both $\ker(\varphi_i)$ and $R$ are normal in $K$, and because $R$ is minimal, 
  it follows that $R\cap\ker(\varphi_i)=1$ or $R$.  If $R\cap\ker(\varphi_i)=R$ for all $i$ then $\varphi_i(R)=1$
  for all $i$, which implies that $R=1$, a contradiction.  So there exists $j$ such that $R\cap\ker(\varphi_j)=1$, 
  which implies that $\varphi_j(R)\cong R$.  Moreover,
  $1\neq\varphi_j(R)\unlhd\varphi_j(K)=Y$ and so $T\leq \varphi_j(R)$
  since $Y$ is almost simple.  Thus $\soc(\varphi_j(R))=T$ and so
  $\soc(R)\cong T$.  Since $R$ is normal in $K$, and because $\soc(R)$ is 
  characteristic in $R$, it follows that $\soc(R)$ is normal in $K$.
  Therefore, as $R$ is minimal, we deduce that $R=\soc(R)$.  Thus
  $R\cong T$, and so $R$ is non-abelian simple.  Thus, from above, it holds that $R\leq\soc(G)$.  
  As this holds for every minimal normal subgroup of $K$, we deduce that $\soc(K)\leq\soc(G)$.  

  Now, we have shown that there exists $j$ such
  that $\varphi_j(R)\cong T$.  Thus, because $R\leq\soc(K)\leq\soc(G)=T^m$, we 
  conclude that $\varphi_j(\soc(K))\cong T$.  As $\soc(K)$ is a characteristic subgroup of $K$, and $K$ is a normal subgroup of 
 $X$, it follows that $\soc(K)$ is a normal subgroup of $X$. By 
  Proposition \ref{actionsprop}, $X$ acts transitively on $M$, so, by letting $X$ act on $\soc(K)$ via
  conjugation, it follows that $\varphi_i(\soc(K))\cong T$ for all $i$.  
\end{proof}

As $\soc(K)$ is a subdirect subgroup of a direct product of non-abelian simple groups, we can apply 
Scott's Lemma \cite[p.328]{scotts}.  In particular, Scott's Lemma implies that there exists a partition 
$\mathcal{J}=\{J_1,\ldots,J_\ell\}$ of $M$ such
that \[\soc(K)=D_1\times\ldots\times D_\ell\] where each $D_i$ is  
a full diagonal subgroup of $\Pi_{j\in J_i}T$.  We call $J_i$ the \emph{support of
  $D_i$}.  Let $x=(t_1,\ldots,t_m)\in D_i$.  Then $t_i=1$ for all
$i\in M\backslash\{J_i\}$, so we introduce the following notation.       

\begin{notation}\label{nota1} Let $J=\{i_1,\ldots,i_k\}\subseteq M$ and
  $t_{i_1},\ldots,t_{i_k}$ be $k$ permutations of $S_q$.
  Then we let
  $[t_{i_1},\ldots,t_{i_k}]_J$ denote the group element in the base
  group of $\Aut(\Gamma)$ given by 
 \[[t_{i_1},\ldots,t_{i_k}]_J|_u=\left\{\begin{array}{ll}
  t_u&\textnormal{if $u\in J$}\\
  1&\textnormal{if $u\notin J$} \end{array}\right.\]
\end{notation}

Now, because $D_i$ is a full diagonal subgroup of $\Pi_{j\in J_i}T$, it
follows that \[D_i=\{[t,t^{\psi_{i_2}},\ldots 
  t^{\psi_{i_k}}]_{J_i}\,:\,t\in T\}\] where each
$\psi_{i_s}$ is an automorphism of $T$.  Thus $D_i\cong T$ for each $i$, and so
$\soc(K)$ is the direct product of a finite set of non-abelian simple groups.  Hence,
$\mathcal{D}=\{D_1,\ldots,D_\ell\}$ is the complete set of minimal normal subgroups of $\soc(K)$ \cite[p.113]{dixonmort}.
By Proposition \ref{subdir}, $\soc(K)$ is normal in $X$, so $X$ acts on $\mathcal{D}$ by conjugation.  
In particular, for each $D_i\in\mathcal{D}$ and $x\in X$, there exists $D_u\in\mathcal{D}$ such that $x^{-1}D_ix=D_u$.  

\begin{lemma}\label{ninva} Let $x=h\sigma\in X$.  Then $x^{-1}D_ix=D_j$ if and only
  if $J_i^\sigma=J_j$.  
\end{lemma}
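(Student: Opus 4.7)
My plan is to translate the statement about conjugation of $D_i$ into a statement about supports, and then invoke the partition property of $\mathcal{J}$ together with the fact that the $D_u$ are the minimal normal subgroups of $\soc(K)$. Since $\soc(K) \unlhd X$ by Proposition \ref{subdir}, and since $\mathcal{D} = \{D_1, \ldots, D_\ell\}$ is the complete set of minimal normal subgroups of $\soc(K)$, conjugation by $x$ permutes $\mathcal{D}$: there is a unique $k$ with $x^{-1} D_i x = D_k$, so it suffices to prove $J_k = J_i^\sigma$.

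The main step is to track supports under conjugation. A general non-identity element of $D_u$ has the form $d = [t, t^{\psi_{i_2}}, \ldots, t^{\psi_{i_k}}]_{J_u}$ for some $1 \neq t \in T$, and since each $\psi_{i_s}$ is an automorphism of $T$, none of its components vanish; by Notation \ref{nota1}, the support of $d$ is therefore exactly $J_u$. Writing $x = h\sigma$ with $h = (h_1, \ldots, h_m) \in B$ and $\sigma \in L$, I would compute
\[x^{-1} d x = \sigma^{-1}(h^{-1} d h)\sigma.\]
Conjugation inside $B$ is coordinatewise, so $h^{-1} d h$ still has support $J_i$; and conjugation by $\sigma$ permutes coordinates via $\sigma$, so using \eqref{autgpaction} a short check shows that for any $b \in B$ the support of $\sigma^{-1} b \sigma$ is $\supp(b)^\sigma$. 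Hence $x^{-1} d x$ has support $J_i^\sigma$.

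Finally, since $x^{-1} d x$ is a non-identity element of $D_k$, the previous paragraph forces $J_k = J_i^\sigma$. Because $\{J_1, \ldots, J_\ell\}$ partitions $M$, the index $k$ is uniquely determined by $J_i^\sigma$, so $x^{-1} D_i x = D_j$ holds if and only if $J_i^\sigma = J_j$. I do not expect any real obstacle here: the argument is essentially bookkeeping with the semidirect product structure of $\Aut(\Gamma)$ and Notation \ref{nota1}, and the only care needed is in verifying that $\supp(\sigma^{-1} b \sigma) = \supp(b)^\sigma$ directly from \eqref{autgpaction}.
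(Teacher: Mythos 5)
Your proof is correct and follows essentially the same route as the paper: both arguments observe that a non-identity element of $D_i$ has support exactly $J_i$ (since the $\psi_{i_s}$ are automorphisms), that conjugation by $h\sigma$ sends this support to $J_i^\sigma$, and that conjugation by $x$ permutes the set $\mathcal{D}$ of minimal normal subgroups, so the support determines the image subgroup. The only cosmetic difference is that you deduce the reverse implication from uniqueness of supports within the partition $\mathcal{J}$, whereas the paper argues both directions directly; this is not a substantive divergence.
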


\begin{proof} Let $1\neq y\in D_i$, and suppose that $J_i^\sigma=J_j$. As $y_s\neq 1$ for all $s\in D_i$, it
follows that $x^{-1}yx|_s\neq 1$ for all $s\in J_j$.  Hence $x^{-1}yx\in D_j$, and so $x^{-1}D_ix=D_j$ by the comments preceding this lemma.
Conversely suppose that $x^{-1}D_ix=D_j$, and let $s\in J_i$, so $y_s\neq1$.  Then $x^{-1}yx|_{s^\sigma}\neq 1$, and because $x^{-1}yx\in D_j$ 
it follows that $s^\sigma\in J_j$.
\end{proof}

\begin{remark}\label{remninva} As $X$ acts on $\mathcal{D}$ by conjugation, it is
a consequence of Lemma \ref{ninva} that $\mathcal{J}$ is an $X$-invariant partition of $M$.
Moreover, because $X$ acts transitively on $M$ it also follows that $X$ acts transitively on $\mathcal{D}$.
\end{remark}

Let us consider the group \[D_i=\{[t,t^{\psi_{i_2}},\ldots
  t^{\psi_{i_k}}]_{J_i}\,:\,t\in T)\}\] and the group
 $\overline{N_{S_q}(T)}\leq \Aut(T)$.  (Here $\overline{N_{S_q}(T)}$ denotes
that subgroup of $\Aut(T)$ induced by $N_{S_q}(T)$.)  Let $\mathcal{T}$ be a
 transversal for $\overline{N_{S_q}(T)}$ in $\Aut(T)$.  Then for each
 $s\in J_i$, $\psi_{i_s}=z_{i_s}\bar{h}_{i_s}$ for some $z_{i_s}\in\mathcal{T}$ and
 $\bar{h}_{i_s}\in \overline{N_{S_q}(T)}$.  Conjugating $D_i$ by
 $[1,h_{i_2}^{-1},\ldots,h_{i_k}^{-1}]_{J_i}$
 yields \[D_i^{[1,h_{i_2}^{-1},\ldots,h_{i_k}^{-1}]_{J_i}}=\{[t,t^{z_{i_2}},\ldots,t^{z_{i_{k}}}]_{J_i}\,:\,t\in   
 T\}.\] Let $y=\Pi_{i=1}^\ell[1,h^{-1}_{i_2},\ldots,h^{-1}_{i_k}]_{J_i}$.
 By replacing $C$ by $C^y$ if necessary, we may assume without loss of
 generality that for all $D_i$, each $\psi_{i_s}$ lies in the
 transversal $\mathcal{T}$.   

\begin{definition}\label{defform} Let $\mathcal{T}$ be a transversal for
  $\overline{N_{S_q}(T)}$ in $\Aut(T)$ and 
  $D_i=\{[t,t^{\psi_{i_2}},\ldots t^{\psi_{i_k}}]_{J_i}\,:\,t\in T)\}$
  where each $\psi_{i_s}\in\mathcal{T}$.  If $\psi_{i_s}=1$ for all
  $i_s\in J_i$ we say that \emph{$D_i$ has Form 1}, otherwise we say that \emph{$D_i$ has Form 2}.     
\end{definition} 

Using the classification of finite almost simple 2-transitive groups we know that $|\mathcal{T}|\leq 2$ \cite[Table 7.4]{camperm}.  
If $|\mathcal{T}|=1$ then $D_i$
has to have $\Form 1$.  If $D_i$ has $\Form
2$ then $T$ is the socle of one of the groups from the third column of Table
\ref{table1} and $\mathcal{T}=\{1,\tau\}$ for some $\tau\in
\Aut(T)\backslash\overline{N_{S_q}(T)}$ \cite[Table 7.4]{camperm}.  Moreover we can assume 
that $\tau^2=1$ (see \cite[Remark 4.1]{twisted}). Recall that, by assumption, each $\psi_{i_s}$ is equal to $1$ or $\tau$.
 
\begin{lemma}\label{form} Every minimal normal subgroup of $\soc(K)$ has the same Form,
i.e. all the $D_i$ have $\Form 1$, or they all have $\Form 2$.     
\end{lemma}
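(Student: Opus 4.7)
The plan is to use the transitive action of $X$ on $\mathcal{D}=\{D_1,\ldots,D_\ell\}$ by conjugation (Remark~\ref{remninva}) to reduce to showing that for any $x\in X$ and any $i$, the conjugate $D_j:=x^{-1}D_ix\in\mathcal{D}$ shares its Form with $D_i$. If $|\mathcal{T}|=1$ every $D_i$ has Form~$1$ by definition, so I may assume $\mathcal{T}=\{1,\tau\}$. Then $\overline{N_{S_q}(T)}$ has index $2$ in $\Aut(T)$ and is therefore normal, with quotient of order $2$; write $[\psi]$ for the coset of $\psi\in\Aut(T)$ modulo $\overline{N_{S_q}(T)}$. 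In these terms, $D_i$ has Form~$1$ exactly when the set $C_i:=\{[\psi_{i_s}]:s=1,\ldots,k\}$ equals $\{[1]\}$, and Form~$2$ exactly when $C_i=\{[1],[\tau]\}$ (using that $\psi_{i_1}=1$ forces $[1]\in C_i$).

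Next I would carry out the conjugation explicitly. Writing $x=h\sigma$ with $h=(h_1,\ldots,h_m)\in\hat{Y}^m$, and letting $\bar{h}_u\in\overline{N_{S_q}(T)}$ denote the automorphism of $T$ induced by $h_u\in\hat{Y}\leq N_{S_q}(T)$, Lemma~\ref{ninva} gives $J_i^\sigma=J_j$. A direct calculation using~\eqref{autgpaction} shows that for $y=[t,t^{\psi_{i_2}},\ldots,t^{\psi_{i_k}}]_{J_i}\in D_i$,
\[
x^{-1}yx=[t^{\bar{h}_{i_1}},\,t^{\psi_{i_2}\bar{h}_{i_2}},\,\ldots,\,t^{\psi_{i_k}\bar{h}_{i_k}}]_{J_j},
\]
with the successive entries placed at positions $i_1^\sigma,\ldots,i_k^\sigma$ of $J_j$. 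After relabelling $(i_1,\ldots,i_k)$ as $(a_1,\ldots,a_k)$ so that the canonical first element $j_1$ of $J_j$ equals $a_1^\sigma$, and reparametrising by $s=t^{\psi_{a_1}\bar{h}_{a_1}}$, I match this to the $\mathcal{T}$-normalised presentation of $D_j$ and read off that each normalised twist $\psi_{j_t}$ represents the coset
\[
[\psi_{j_t}]\;=\;[\bar{h}_{a_1}^{-1}\psi_{a_1}^{-1}\psi_{a_t}\bar{h}_{a_t}]\;=\;[\psi_{a_1}]^{-1}[\psi_{a_t}],
\]
where the second equality uses normality of $\overline{N_{S_q}(T)}$.

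From this formula, $C_j=[\psi_{a_1}]^{-1}\cdot C_i$ is a global left-translate of $C_i$ in the order-$2$ quotient $\Aut(T)/\overline{N_{S_q}(T)}$, and any such translation preserves the cardinality of the set. Hence $|C_j|=|C_i|$, so $C_j=\{[1]\}$ iff $C_i=\{[1]\}$, i.e.\ $D_j$ has Form~$1$ iff $D_i$ does; transitivity of $X$ on $\mathcal{D}$ then forces all the $D_i$ to share a single Form. The main obstacle is the careful bookkeeping in the two-step conjugation---first by $h$, which twists each non-identity entry of $y$ by the corresponding $\bar{h}_{i_s}$, and then by $\sigma$, which moves the support from $J_i$ to $J_j=J_i^\sigma$---together with the reparametrisation needed to recover the canonical $\mathcal{T}$-normalised form of $D_j$; once this is handled correctly, the coset-invariance argument in the order-$2$ quotient is essentially automatic.
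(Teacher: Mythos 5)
Your proof is correct and follows essentially the same route as the paper's: both hinge on conjugating an element of $D_i$ by $x=h\sigma\in X$ with each $h_u$ inducing an automorphism $\bar h_u\in\overline{N_{S_q}(T)}$, and on the fact that $\tau\notin\overline{N_{S_q}(T)}$. The paper packages this as a contradiction argument applied only to the untwisted diagonal element of a Form 1 factor, whereas you run the conjugation in general and record the conclusion as invariance of the coset set $C_i$ under translation in the order-two quotient $\Aut(T)/\overline{N_{S_q}(T)}$; this is a cosmetic rather than substantive difference.
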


\begin{proof}  Suppose there exist $D_i, D_j\in\mathcal{D}$ such that  $D_i$ has $\Form 1$ and $D_j$ has $\Form 2$.
  Let $J_i=\{i_1,\ldots,i_k\}$, $J_j=\{j_1,\ldots,j_k\}$ be the respective supports of $D_i, D_j$.  As $D_j$ has $\Form 2$, there
 exists $s\in J_j$ such that $\varphi_s=\tau$. By Remark \ref{remninva},
 there exists $x=(h_1,\ldots, h_m)\sigma\in X$ such that $x^{-1}D_ix=D_j$, and so $J_i^{\sigma}=J_j$.  
 Let $u,v\in J_i$ such that $u^\sigma=j_1$ and $v^\sigma=s$. Then for each
 $t\in T$ there exists $t'\in T$ such that $[t,\ldots,t]_{J_i}^x=[t',t'^{\psi_{j_2}},\ldots,t'^{\psi_{j_k}}]_{J_j}$.
  This implies that for each $t\in T$ there exists $t'\in T$ such that
  $t^{h_u}=t'$ and $t^{h_s}=t'^\tau$, which in turn implies that
  $t^{h_s}=t^{h_u\tau}$.  Hence, as automorphisms of
  $T$, we have that $\bar{h}_s=\bar{h}_{u}\tau$.  However this
  implies that $\tau\in\overline{N_{S_q}(T)}$ which is a
  contradiction. 
\end{proof}

Suppose there exists $D_i\in\mathcal{D}$, with support $J_i$, that has $\Form 2$.  
We define \begin{equation}\label{ji12}J_i^{(1)}=\{s\in J_i\,:\,\psi_s=1\}\textnormal{ and } 
 J_i^{(2)}=\{s\in J_i\,:\,\psi_s=\tau\}.\end{equation}  It follows that $J_i$ is
 the disjoint union of $J_i^{(1)}$ and $J_i^{(2)}$.  
  Because $\mathcal{J}$ is a partition of $M$ and since $S_m$ acts
 $m$-transitively on $M$, there exists
 $\sigma\in\Pi\Sym(J_i)\leq L\cong S_m$ such
 that \[D_i^\sigma=\{[t,\ldots,t,t^{\tau},\ldots,t^{\tau}]_{J_i}\,:\,t\in     
 T\}\] for each $i$.  Therefore, replacing $C$ by $C^\sigma$, we can
 assume that each $D_i$ has this form.  We want to be able to refer to
 the two possibilities for $D_i$.  Therefore we define the following.

\begin{definition}\label{case1case2}  We say $\case1$ holds if $\Form
  1$ holds for all $D_i$ (that is, $\psi_s=1$ for all $s\in M$) 
  and \[D_i=\{[t,\ldots,t]_{J_i}\,:\,t\in T\}\] for
  $1\leq i\leq \ell$.  In this case we abbreviate $[t,\ldots,t]_{J_i}$
  (as in Notation \ref{nota1}) by $[t]_{J_i}$  where \[ [t]_{J_i}|_u=\left\{\begin{array}{ll}
  t&\textnormal{if $u\in J_i$}\\
  1&\textnormal{if $u\notin J_i$.} \end{array}\right.\]  We say
  $\taucase$ holds if $T$ is the socle of one of the groups from the third column of Table
  \ref{table1} and $q$ is the corresponding degree;
  $\tau\in\Aut(T)\backslash\overline{N_{S_q}(T)}$ such that
  $\tau^2=1$; $\Form 2$ holds for all $D_i$ (that is there exists 
  $s\in J_i$ such that $\psi_s\neq 1)$; and \[D_i=\{[t,\ldots,t,t^{\tau},\ldots,t^{\tau}]_{J_i}\,:\,t\in 
  T\}\] for $1\leq i\leq \ell$.  In this case we abbreviate
  $[t,\ldots,t,t^{\tau},\ldots,t^{\tau}]_{J_i}$ (as in Notation
  \ref{nota1}) by $[t,t^\tau]$ where \[ [t,t^\tau]_{J_i}|_u=\left\{\begin{array}{ll}
  t&\textnormal{if $u\in J_i^{(1)}$}\\
  t^\tau&\textnormal{if $u\in J_i^{(2)}$}\\
  1&\textnormal{if $u\notin J_i$.} \end{array}\right.\]
\end{definition}

\begin{lemma}\label{ninvaonj} Suppose $D_i\in\mathcal{D}$, with support $J_i$,  has $\Form 2$.  Then
$\{J_i^{(1)}, J_i^{(2)}\}$ is an $X_{J_i}$-invariant partition of $J_i$, and in particular $|J_i^{(1)}|=|J_i^{(2)}|$.
\end{lemma}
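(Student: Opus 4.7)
The plan is to fix an arbitrary $x=h\sigma\in X_{J_i}$ and analyse the conjugation action on a general element $y_t:=[t,t^\tau]_{J_i}$ of $D_i$. Since $J_i^\sigma=J_i$, Lemma~\ref{ninva} gives $x^{-1}D_ix=D_i$, so $x^{-1}y_tx=y_{t'}$ for some $t'\in T$ depending on $t$. A direct computation using the formula \eqref{autgpaction} shows that for each $v\in J_i$ the entry of $x^{-1}y_tx$ at position $v$ is $h_u^{-1}(y_t)_u h_u$, where $u:=v\sigma^{-1}\in J_i$. Writing $\bar h_u\in\overline{N_{S_q}(T)}$ for the automorphism of $T$ induced by $h_u$, this entry equals $t^{\bar h_u}$ if $u\in J_i^{(1)}$ and $t^{\tau\bar h_u}$ if $u\in J_i^{(2)}$; on the other hand, since $x^{-1}y_tx=y_{t'}$, it must equal $t'$ if $v\in J_i^{(1)}$ and $t'^\tau$ if $v\in J_i^{(2)}$.

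Next I would compare these expressions at varying positions to show $\sigma$ respects the partition $\{J_i^{(1)},J_i^{(2)}\}$. Suppose, for a contradiction, there exist $u_1,u_2\in J_i^{(1)}$ with $u_1^\sigma\in J_i^{(1)}$ and $u_2^\sigma\in J_i^{(2)}$. Then for every $t\in T$ the calculation above forces $t^{\bar h_{u_1}}=t'$ and $t^{\bar h_{u_2}}=t'^\tau=t^{\bar h_{u_1}\tau}$. Since $T$ is non-abelian simple, $\Aut(T)$ acts faithfully on $T$, so $\bar h_{u_1}\tau=\bar h_{u_2}$ in $\Aut(T)$, giving $\tau=\bar h_{u_1}^{-1}\bar h_{u_2}\in\overline{N_{S_q}(T)}$ and contradicting $\taucase$. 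The three analogous mixed configurations (starting from $u_1,u_2\in J_i^{(2)}$, or from one $u_k$ in each part with incompatible targets) all reduce, by the same calculation, to the same forbidden conclusion $\tau\in\overline{N_{S_q}(T)}$. It follows that $\sigma$ either stabilises each of $J_i^{(1)}$, $J_i^{(2)}$ setwise, or interchanges the two parts; in either case $\{J_i^{(1)},J_i^{(2)}\}$ is an $X_{J_i}$-invariant partition of $J_i$.

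Finally, to obtain $|J_i^{(1)}|=|J_i^{(2)}|$, I would show that $X_{J_i}$ acts transitively on $J_i$: given $u,v\in J_i$, Proposition~\ref{actionsprop} provides $y\in X$ with $u^y=v$, and since $\mathcal{J}$ is an $X$-invariant partition of $M$ (Remark~\ref{remninva}) with $v\in J_i\cap J_i^y$, we must have $J_i^y=J_i$, so $y\in X_{J_i}$. If every element of $X_{J_i}$ stabilised both parts of the partition, then $J_i^{(1)}$ and $J_i^{(2)}$ would be two distinct $X_{J_i}$-orbits, contradicting this transitivity. Hence some element of $X_{J_i}$ swaps $J_i^{(1)}$ and $J_i^{(2)}$, yielding $|J_i^{(1)}|=|J_i^{(2)}|$. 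I expect the main obstacle to be the case analysis of the second paragraph, where one must ensure that every way for $\sigma$ to mix the parts genuinely produces an element of $\overline{N_{S_q}(T)}$ equal to $\tau$; the faithful action of $\Aut(T)$ on the non-abelian simple group $T$ together with the hypothesis $\tau\notin\overline{N_{S_q}(T)}$ is what makes the argument go through.
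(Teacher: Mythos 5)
Your proposal is correct and follows essentially the same route as the paper: you show that any $x=h\sigma\in X_{J_i}$ mixing the parts $J_i^{(1)},J_i^{(2)}$ would force $\bar h_{u_2}=\bar h_{u_1}\tau$ and hence $\tau\in\overline{N_{S_q}(T)}$ (this is exactly the conjugation computation the paper imports from the proof of Lemma~\ref{form}), and then you deduce $|J_i^{(1)}|=|J_i^{(2)}|$ from the transitivity of $X_{J_i}$ on $J_i$, just as the paper does. The only difference is that you write out in full the details the paper leaves implicit, including the verification that $X_{J_i}$ is transitive on $J_i$.
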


\begin{proof} Let $x=h\sigma=(h_1,\ldots,h_m)\sigma\in X_{J_i}$ and
 suppose  $\emptyset\neq (J_i^{(1)})^\sigma \cap J_i^{(1)}\subset J_i^{(1)}$, that is, properly contained in $J_i^{(1)}$.  
 Then there exist $u,s\in J_i^{(1)}$ such that $u^\sigma\in J_i^{(1)}$ and $s^\sigma\in J_i^{(2)}$.  
 Following a similar argument to that used in the proof of Lemma \ref{form}, we deduce that, as automorphisms of $T$, $\bar{h}_s=\bar{h}_u\tau$, 
 This implies that $\tau\in \overline{N_{S_q}(T)}$, which is a contradiction.  Thus either $(J_i^{(1)})^\sigma=J_i^{(1)}$ or 
$J_i^{(1)}\cap J_i^{(1)}=\emptyset$.  A similar argument shows this to be true for $J_i^{(2)}$ also.  The result now follows from
the fact that $X_{J_i}$ acts transitively on $J_i$.  
\end{proof}

\begin{remark}\label{remkeven} (i) It is a consequence of Lemma \ref{ninvaonj} that 
$\mathcal{J}^{(\tau)}=\{J_1^{(1)},J_1^{(2)},
\ldots,J_\ell^{(1)},J_\ell^{(2)}\}$ is an $X$-invariant partition
of $M$.       

 (ii) Let $|\mathcal{D}|=\ell$.  As $X$ acts transitively on $M$,
  $|J_i|=m/\ell=k$ for all $i$, in particular, $m=\ell k$.  If $\taucase$
  holds, Lemma \ref{ninvaonj} implies that $k$ is even.  Let
  $\alpha=(\alpha_1,\ldots,\alpha_m)\in C$, $J\in\mathcal{J}$ and $i\in J$ if $\case1$ holds, and $i\in J^{(1)}$ if $\taucase$ holds.
  Since $T$ acts transitively on $Q$, there exists $t\in T$ such that $\alpha_i^t\neq\alpha_i$. 
  Let $x=[t]_J$ in $\case 1$ and $x=[t,t^\tau]_J$ in $\taucase$, so $x\in\soc(K)$.  As $\alpha_i\neq\alpha_i^t$, it follows that
  $\alpha\neq\alpha^x$, so $3\leq\delta\leq d(\alpha,\alpha^x)$.  Hence $k\geq 3$, and if $\taucase$ holds, $k\geq 4$ as $k$ is even.   
\end{remark}

\section{The structure of the Projection codes}\label{secstructproj}

In the previous section we proved that there exists an $X$-invariant partition
$\mathcal{J}$ of $M$ for any alphabet almost simple $X$-neighbour transitive
code with $\delta\geq 3$ and $K:=X\cap B\neq 1$.  Moreover, if $\taucase$ holds (as in Definition \ref{case1case2})
Lemma \ref{ninvaonj} implies that $\mathcal{J}^{(\tau)}$ is also an $X$-invariant partition of $M$.
Let $J\in\mathcal{J}$ if $\case1$ holds and $J\in\mathcal{J}^{(\tau)}$ if
$\taucase$ holds.  Then Lemma \ref{projcode} implies that $\pi_J(C)$ is a
$\chi(X_J)$-neighbour transitive code in $H(J,q)$.  In this section we
prove Proposition \ref{applyproj}, which describes the code $\pi_J(C)$
in each case.  Recall from Example \ref{repex} the repetition code
$\Rep(k,q)$ in $H(k,q)$, and from Example \ref{allex} the code
$\All(pq,q)$ in $H(pq,q)$. 

\begin{proposition}\label{applyproj} Let $C$ be an alphabet almost simple $X$-neighbour transitive code with
  $\delta\geq 3$ and $K\neq 1$.  Then if $\case1$ holds, either
\begin{itemize}
\item[(i)] $\pi_J(C)=\Rep(k,q)$ in $H(J,q)$ with $|J|=k$ for all $J\in\mathcal{J}$, or
\item[(ii)] $\pi_J(C)\subseteq \All(pq,q)$ in $H(J,q)$ with $|J|=pq$ for some positive integer $p$ for all $J\in\mathcal{J}$.
\end{itemize}
If $\taucase$ holds then $\pi_J(C)\subseteq \All(pq,q)$ in $H(J,q)$ with $|J|=pq$ for some positive integer $p$ for all $J\in\mathcal{J}^{(\tau)}$. 
\end{proposition}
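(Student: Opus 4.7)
The plan is to establish that $\pi_J(C)$ is diagonally neighbour transitive and then apply the classification of such codes from \cite{diagnt}.

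First, I will show that $\chi(X_J) \leq \Diag_{|J|}(\hat{Y}) \rtimes \Sym(J)$, where $\hat{Y} = X_1^Q$. Let $D \in \mathcal{D}$ be the minimal normal subgroup of $\soc(K)$ whose support contains $J$: in $\case1$, $D = D_i$ with $\supp(D) = J_i = J$, while in $\taucase$, $J$ is one of $J_i^{(1)}, J_i^{(2)}$ sitting inside $J_i = \supp(D_i)$. By the $X$-invariance of the partitions $\mathcal{J}$ and $\mathcal{J}^{(\tau)}$ (Remark \ref{remninva} and Lemma \ref{ninvaonj}), $X_J$ stabilises $\supp(D)$, hence by Lemma \ref{ninva} normalises $D$. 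For $x = (h_1, \ldots, h_m)\sigma \in X_J$ and a typical element $g \in D$ of the form $[t]_{J_i}$ in $\case1$ (respectively $[t, t^\tau]_{J_i}$ in $\taucase$), I will compute the base-group components of $x^{-1}gx$ and observe that membership in $D$ forces $h_u^{-1} t h_u$ to be independent of $u \in J$ for all $t \in T$. This makes $h_u h_{u'}^{-1} \in \cent{\hat{Y}}{T} = 1$ for $u, u' \in J$, using that $\hat{Y}$ is almost simple with socle $T$; so $h_u = h_{u'}$ throughout $J$, and Remark \ref{chirem} gives the desired diagonal form for $\chi(x)$.

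Next, I will apply the classification. Proposition \ref{projcode} gives that $\pi_J(C)$ is $\chi(X_J)$-neighbour transitive, and Corollary \ref{projcodedist} ensures $\delta(\pi_J(C)) \geq 2$ whenever $\pi_J(C)$ is not the complete code. Combining $X_{\bar{1}} \leq X_J$ for any $\bar{1} \in J$ with the diagonal containment yields $\chi(X_J)_{\bar{1}}^Q = \hat{Y}$, so the induced alphabet action is $2$-transitive of almost simple type. The classification of diagonally neighbour transitive codes from \cite{diagnt} then forces $\pi_J(C)$ to equal $\Rep(|J|, q)$ or to be contained in $\All(pq, q)$ with $|J| = pq$; Examples \ref{injex} and \ref{weightex} are eliminated since they have minimum distance $1$ or give an affine alphabet action, and the complete code is ruled out by the same minimum-distance consideration. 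This establishes conclusions (i) and (ii) in $\case1$.

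The main obstacle will be excluding the repetition outcome in $\taucase$. My approach is to assume for contradiction that $\pi_{J_i^{(1)}}(C) = \Rep(|J_i^{(1)}|, q)$. Applying Lemma \ref{projneigh2} within each $X$-orbit on $\mathcal{J}^{(\tau)}$ forces every codeword to be constant on each of $J_i^{(1)}$ and $J_i^{(2)}$, say with values $a$ and $b$ respectively. An element $[t, t^\tau]_{J_i} \in D_i$ with $t \in T_a$ but $t^\tau \notin T_b$ then produces a distinct codeword agreeing with the original on $J_i^{(1)}$ but differing throughout $J_i^{(2)}$, giving two codewords at Hamming distance exactly $|J_i^{(2)}|$. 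The existence of such a $t$ is guaranteed by $\tau \notin \overline{N_{S_q}(T)}$ together with the inequivalent $T$-actions recorded in Table \ref{table1}, and a case analysis over the admissible simple socles $T$ there will then contradict $\delta \geq 3$. With this exclusion in place, only the inclusion $\pi_J(C) \subseteq \All(pq, q)$ remains, completing $\taucase$.
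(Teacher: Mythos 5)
Your treatment of $\case1$ follows the paper's route: you establish that $\chi(X_J)$ is diagonal by conjugating elements of the relevant $D_i$ and invoking $C_{S_q}(T)=1$ (the paper phrases this as computing $N_{\Aut(H(J,q))}(\chi(\soc(K)))=\Diag_{|J|}(N_{S_q}(T))\rtimes \Sym(J)$, but the calculation is the same), and you then apply the classification of diagonally neighbour transitive codes and discard the injective and weight examples. Two things are missing, one small and one serious. The small one: the proposition asserts that the \emph{same} alternative holds for every $J\in\mathcal{J}$, and you never argue this for $\case1$. The paper gets it from Lemma \ref{projneigh2} (all $\delta_J$ coincide) together with the nontrivial claim that a diagonally neighbour transitive code contained in $\All(pq,q)$ has $\delta_J<|J|$ --- proved by showing that otherwise $\chi(\soc(K))$ would act regularly on its orbits, forcing $|T|\leq q$, hence $T$ regular on $Q$ and $C_{S_q}(T)$ transitive, contradicting $C_{S_q}(T)=1$. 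Without something of this kind you cannot exclude a mixture of repetition and frequency-permutation-array projections across the blocks of $\mathcal{J}$.

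The serious gap is your exclusion of the repetition outcome in $\taucase$. Granting that every codeword is constant on $J_i^{(1)}$ (value $a$) and on $J_i^{(2)}$ (value $b$), your element $[t,t^\tau]_{J_i}$ with $t\in T_a$ and $t^\tau\notin T_b$ produces a codeword at Hamming distance exactly $|J_i^{(2)}|=k/2$ from the original. Since $k=|J_i|$ is unbounded and independent of the choice of $T$ (Remark \ref{remkeven} only gives $k\geq 4$), the distance $k/2$ is typically at least $3$, and no case analysis over the socles in Table \ref{table1} can turn this into a contradiction with $\delta\geq 3$. The obstruction here is genuinely not a minimum-distance one: the paper instead shows that $\pi_{J_i}(C)$ (the projection onto the whole block $J_i$, not onto a half) could not be $\chi(X_{J_i})$-neighbour transitive, by first refining the diagonality to show that every element of $\chi(X_{J_i})$ has the form $(h,\ldots,h,h^\tau,\ldots,h^\tau)\sigma$ with $h\in N_{S_q}(T)$, and then exhibiting two neighbours $\nu(\alpha,k/2+1,c)$ and $\nu(\alpha,k/2+1,d)$ of $\alpha=(a,\ldots,a,b,\ldots,b)$, with $c$ and $d$ chosen in distinct orbits of $(N_{S_q}(T)_a)^\tau$ on $Q$, that no element of this form can fuse. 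You would need to replace your step with an argument of this type.
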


\begin{proof}
Let $J\in\mathcal{J}$ if $\case1$ holds and $J\in\mathcal{J}^{(\tau)}$ if $\taucase$ holds.  Also let $|J|=r$ and denote $H(J,q)$ by $\Gamma(J)$.  
As $\soc(K)$ is contained in the base group of $\Aut(\Gamma)$, it follows that $\soc(K)\unlhd\,X_J$.
Hence $\chi(\soc(K))\unlhd\chi(X_J)$ in $\Aut(\Gamma(J))$.  Thus $\chi(X_J)\leq N:=N_{\Aut(\Gamma(J))}(\chi(\soc(K)))$.
We claim that \begin{equation}\label{normal} N=\Diag_r(N_{S_q}(T))\rtimes S_r.\end{equation}
By Remark \ref{chirem}, $\chi(\soc(K))=\Diag_r(T)$, and it is clear that the top group $L(J)\cong S_r$
of $\Aut(\Gamma(J))\cong S_q\wr S_r$ centralises $\chi(\soc(K))$, so $L(J)\leq N$.  Hence
if $h\sigma\in N$ it follows that $h=(h_{i_1},\ldots,h_{i_r})\in N$.  Now, for $t\in T$ it follows that
\[(t,\ldots,t)^h=(t^{h_{i_1}},\ldots,t^{h_{i_r}})\in\chi(\soc(K)).\]
Thus, for all $t\in T$ and $j\geq 2$, $t^{h_{i_1}}=t^{h_{i_j}}$.  This
implies that $h_{i_1}h^{-1}_{i_j}\in C_{S_q}(T)$.  However, because $T$ is almost simple and
acts primitively on $Q$, $C_{S_q}(T)=1$ \cite[Theorem 4.2A]{dixonmort}.  As $h\in N$ it follows
that for each $t\in T$ there exists $t'\in T$ such that $(t,\ldots,t)^h=(t',\ldots,t')$.  In particular, this implies that $h_{i_1}\in N_{S_q}(T)$. 
Thus $N\leq \Diag_r(N_{S_q}(T))\rtimes S_r$, and it is straight forward to show that these two groups are in fact equal.  Hence the claim holds.

Now, because $J$ is a block of imprimitivity for the action of $X$ on $M$, Lemma \ref{projcode} implies that $\pi_J(C)$ is a
$\chi(X_J)$-neighbour transitive code in $\Gamma(J)$.  As $\chi(X_J)\leq \Diag_r(N_{S_q}(T))\rtimes S_r$, 
we conclude that $\pi_J(C)$ is a diagonally neighbour transitive code in $\Gamma(J)$.  Hence,
by the classification of diagonally neighbour transitive codes \cite{diagnt}, one of the following holds:
\begin{itemize}
\item[(a)] $\pi_{J}(C)$ is the repetition code $\Rep(r,q)$
\item[(b)] $r<q$ and
  $\pi_{J}(C)=\{(\alpha_{i_1}\,\ldots,\alpha_{i_r})\,:\,\alpha_u\neq\alpha_v$
  for all $u,v\in J\}$, 
\item[(c)] $r$ is odd, $q=2$ and $\pi_{J}(C)=\{\alpha\in \Gamma(J)\,:\,\alpha$ has
  weight $\frac{r-1}{2}$ or $\frac{r+1}{2}\}$, or
\item[(d)] there exists an integer $p$ such that $r=pq$ and
  $\pi_{J}(C)\subseteq\All(pq,q)$ in $\Gamma(J)$. 
\end{itemize}
Let $\delta_J$ be the minimum distance of $\pi_{J}(C)$.  In each of the cases 
(a)--(d), $\pi_{J}(C)$ is not the complete code in $\Gamma(J)$.
Therefore Corollary \ref{projcodedist} implies that $\delta_J\geq 2$.
If (b) holds then we saw in Example \ref{injex} that $\delta_J=1$,
which is a contradiction.  Suppose that (c) holds.
Then $q=2$ and $T\leq S_2$.  However, $S_2$ is not almost simple, which
is a contradiction. Therefore either (a) or (d) holds for $\pi_J(C)$. 
We note that if $\pi_J(C)=\Rep(r,q)$ then $\delta_J=r$.   We claim
that if $\pi_J(C)\subseteq\All(pq,q)$ then $2\leq\delta_J<r$.   

When (d) holds, the parameter $r$ is equal to $pq$ for some positive integer $p$.
Also, by Remark \ref{chirem}, $\chi(\soc(K))=\Diag_r(T)$.  Since $\pi_J(C)$ is contained in
$\All(pq,q)$, which has minimum distance $2$, it follows
that $\delta_J\geq 2$.  Now suppose that $\delta_J=r$.  Then
as $\pi_J(C)$ is neighbour transitive, Lemma \ref{cardc=q} implies that 
$\pi_J(C)$ is equivalent to $\Rep(r,q)$.  In particular $|\pi_J(C)|=q$.  Let $\alpha\in\pi_J(C)$ and 
suppose there exist $t_1,t_2\in T$ such that \[\alpha^{(t_1,\ldots,t_1)}=\alpha^{(t_2,\ldots,t_2)}.\]  Then,
because every letter of $Q$ appears in $\alpha$, it follows that $a^{t_1}=a^{t_2}$ for all $a\in Q$.  That is
$t_1=t_2$.  Hence $\chi(\soc(K))$ acts regularly on its orbits in $\pi_J(C)$.  Therefore $|T|\leq|\pi_J(C)|=q$.
In particular, as $T$ acts transitively on $Q$, this implies that $|T|=q$ and that $T$ acts regularly on $Q$.
Thus, by \cite[Theorem 4.2A]{dixonmort}, $C_{S_q}(T)$ acts transitively on $Q$, contradicting the fact that $C_{S_q}(T)=1$.  
Hence $\delta_J<r$ and the claim is proved. Therefore, to recap, we have shown that $\delta_J\geq 2$, and either (a) holds, or (d) holds 
with $\delta_J<r$. 

Now let $J_i,J_j\in\mathcal{J}$ and consider the codes $\pi_{J_i}(C)$ and $\pi_{J_j}(C)$ with minimum distances $\delta_{J_i}$ and $\delta_{J_j}$.
By Lemma \ref{projneigh2}, it follows that $\delta_{J_i}=\delta_{J_j}$.  Thus, because the code in (a) has minimum distance $\delta_J=r$ and the code
in (d) has minimum distance $2\leq\delta_J<r$, we conclude that if $\case1$ holds then either (i) or (ii) in the statement holds.

Assume now that $\taucase$ holds, and recall from Definition \ref{case1case2} that $T$ is the socle of one of the 
groups from the third column of Table~\ref{table1}.
Consider $J\in\mathcal{J}$, so $J^{(1)}$,
$J^{(2)}\in\mathcal{J}^{(\tau)}$.  If we project onto $J$, we saw in the previous section that
\[\chi(\soc(K))=\{(\underbrace{t,\ldots,t}_{k/2},\underbrace{t^\tau,\ldots,t^\tau}_{k/2})\,|\,t\in T)\}.\]  
By Lemma \ref{ninvaonj}, $\{J^{(1)},J^{(2)}\}$ is a $\chi(X_J)$-invariant partition of $J$.
Moreover, by \eqref{normal} we have that
$\chi(X_{J^{(1)}})$ and $\chi(X_{J^{(2)}})$ are subgroups of $\Diag_{k/2}(N_{S_q}(T))\rtimes S_{k/2}.$
Hence, for each $x\in\chi(X_{J})$ there exist $h_1,h_2\in N_{S_q}(T)$ and $\sigma\in S_{k/2}\wr S_2$ such that
\[x=(\underbrace{h_1,\ldots,h_1}_{k/2},\underbrace{h_2,\ldots,h_2}_{k/2})\sigma.\]  Thus for each $t\in T$, it follows that 
\[\begin{array}{ccc}
  x^{-1}(t,\ldots,t,t^\tau,\ldots,t^\tau)x&=&
\begin{cases} (t^{\overline{h}_1},\ldots,t^{\overline{h}_1},t^{\tau \overline{h}_2},\ldots,t^{\tau \overline{h}_2}) &\text{if $\sigma$ stabilises $J^{(1)}$}\\ 
(t^{\tau \overline{h}_2},\ldots,t^{\tau \overline{h}_2},t^{\overline{h}_1},\ldots,t^{\overline{h}_1}) &\text{otherwise.}
\end{cases}
\end{array}\]
As $\chi(\soc(K))\unlhd\chi(X_{J})$, we deduce in both cases that $t^{\overline{h}_2}=t^{\tau\overline{h}_1\tau}$ for all
$t\in T$ (recall that $\tau$ was chosen so $\tau^2=1$).  
Now, because $\overline{N_{S_q}(T)}\cong N_{S_q}(T)$ is a normal subgroup of $\Aut(T)$ for each of the possible groups $T$, one can deduce 
that $\overline{h}_2=\tau\overline{h}_1\tau=\overline{h_1^\tau}$.  This implies that $h_2h_1^{-\tau}\in C_{S_q}(T)=1$, so $h_2=h_1^{\tau}$.
Thus for each $x\in\chi(X_{J})$ 
there exist $h\in N_{S_q}(T)$ and $\sigma\in S_{k/2}\wr S_2$ such that \[x=(h,\ldots,h,h^{\tau},\ldots,h^{\tau})\sigma.\]
Now suppose that either 
$\pi_{J^{(1)}}(C)$ or $\pi_{J^{(2)}}(C)$ is the repetition code.  By considering their minimum distances and applying a similar argument 
to the one above, we deduce that both codes are the repetition code in their respective Hamming graphs.  
Since $\mathcal{J}$ is an $X$-invariant partition of $M$, Proposition
\ref{projcode} implies that $\pi_{J}(C)$ is also a
$\chi(X_{J})$-neighbour transitive code.  As both 
$\pi_{J^{(1)}}(C)$ and $\pi_{J^{(2)}}(C)$ are the repetition codes, we deduce that every codeword
of $\pi_{J}(C)$ is of the
form \[(\underbrace{a,\ldots,a}_{k/2},\underbrace{b,\ldots,b}_{k/2})\]  
for some $a,b\in Q$.  Thus $\delta_{J}=k/2$ or $k$.
Let $\alpha=(a,\ldots,a,b,\ldots,b)\in\pi_{J}(C)$. For each possible group $T$, 
$(N_{S_q}(T)_a)^\tau$ has two orbits on $Q$,
each of length at least $2$.  (Here $(N_{S_q}(T)_a)^\tau$ denotes the stabiliser of $a$ in $N_{S_q}(T)$ under the automorphism $\tau$.)
Let $c,d\in Q\backslash\{b\}$ such that $b,c$ are in the same
$(N_{S_q}(T)_a)^\tau$-orbit and $b,d$ are not in the same 
$(N_{S_q}(T)_a)^\tau$-orbit.  Now let
$\nu_1=\nu(\alpha,{k/2+1},c)$ and
$\nu_2=\nu(\alpha,{k/2+1},d)$, which are both neighbours of
$\alpha$.  Since $\pi_{J}(C)$ is $\chi(X_{J})$-neighbour
transitive, there exists $x=(h,\ldots,h,h^\tau,\ldots,h^\tau)\sigma\in
\chi(X_{J})$ such that $\nu_1^x=\nu_2$. Suppose
$(J^{(1)})^\sigma=J^{(2)}$.  Then, as $\nu_1^x=\nu_2$, it follows
that $a^h=b$ and $a^h=d$, which is a contradiction given that $b\neq
d$.  Therefore $(J^{(1)})^\sigma=J^{(1)}$, and so $a^h=a$ and
$h^\tau\in (N_{S_q}(T)_a)^\tau$.  However, it then follows that either $c^{h^\tau}=d$ or $b^{h^\tau}=d$, contradicting the choice
of $b,c$ and $d$.   Hence we conclude that neither $\pi_{J^{(1)}}(C)$ nor $\pi_{J^{(2)}}(C)$ is the repetition code. 
Thus (d) holds for both $\pi_{J^{(1)}}(C)$ and $\pi_{J^{(2)}}(C)$.  If follows from this argument that $\pi_{J^*}(C)$ cannot be the repetition code for
any $J^*\in\mathcal{J}^{(\tau)}$, that is, $\pi_{J^*}(C)\subseteq\All(pq,q)$ for all $J^*\in\mathcal{J}^{(\tau)}$.  
\end{proof}

\section{Building Blocks of $C$}\label{secbuildingblocks}

Let $C$ be an alphabet almost simple $X$-neighbour transitive code with $\delta\geq 3$ and $K:=X\cap B\neq 1$,
and recall from Section \ref{secstruct} that there exists an $X$-invariant partition $\mathcal{J}$ of $M$.  Let $\hat{C}$ denote the 
projection code $\pi_J(C)$ for some $J\in\mathcal{J}$, and let $k=|J|$.  
Also let $\hat{S}=\chi(\soc(K))$.  In this section we describe certain $\hat{S}$-orbits in $\hat{C}$.  We then use these to describe a $\soc(K)$-orbit in $C$.  

\subsection{}\label{secdescribe} Assume that $\case1$ holds, so 
\[\hat{S}=\{x_t=(t,\ldots,t)\,:\,t\in T\}.\]  Let $\alpha\in\hat{C}$
and define \[\hat{\Delta}=\alpha^{\hat{S}},\] the $\hat{S}$-orbit containing $\alpha$.  
By Proposition \ref{applyproj}, either $\hat{C}$ is the repetition code or $\hat{C}\subseteq\All(pq,q)$ where $p=k/q$ is a positive integer. 
Suppose that the former holds.  Then there exists $a\in Q$ such that $\alpha=(a,\ldots,a)$.   
For $t\in T$ it follows that \[\alpha^{x_t}=(a,\ldots,a)^{x_t}=(a^t,\ldots,a^t).\]
Because $T$ is acting transitively on $Q$, we deduce that $\hat{\Delta}=\hat{C}$.    

Now suppose that $\hat{C}\subseteq\All(pq,q)$ with $p=k/q$, and let us identify $Q$ with the set $\{1,\ldots, q\}$.  
As every letter of $Q$ appears $p$ times in $\alpha$, there exists $\sigma$ in the top group of $\Aut(H(k,q))$ such that 
\[\alpha^\sigma=(1,2,\ldots,q,\ldots,1,\ldots,q)=(\alpha(1),\ldots,\alpha(1))=\rep_p(\alpha(1)).\]  By replacing $\hat{C}$ with $\hat{C}^\sigma$ if
necessary, we can assume that $\alpha=\rep_p(\alpha(1))\in\hat{C}$.  Note that as $\hat{S}$ is centralised by the top group of
$\Aut(H(k,q))$, $\hat{S}$ is left unchanged by doing this.  Now let $x_t\in\hat{S}$.  It follows that
\begin{equation}\label{calc1}\rep_p(\alpha(1))^{x_t}=(1,\ldots,q,\ldots,1,\ldots,q)^{x_t}=
(1^t,\ldots,q^t,\ldots,1^t,\ldots,q^t)=\rep_p(\alpha(t)).\end{equation}
As $\hat{S}=\{x_t\,:\,t\in T\}$, we deduce that \[\hat{\Delta}=\alpha^{\hat{S}}=\Rep_p(C(T)).\]  

\subsection{}\label{secdescribe2} Suppose that $\taucase$ holds and let $\alpha\in\hat{C}$.  We saw in Remark \ref{remkeven} that 
$k$ is even, and by Proposition \ref{applyproj}, $\pi_{J^{(i)}}(C)\subseteq\All(pq,q)$ with $p=k/2q$ for $i=1,2$.  
Thus every letter in $Q$ appears in $\alpha$ exactly $p$ times on
$J^{(1)}$, and similarly for $J^{(2)}$.  Consequently there exists $\sigma$ in the top group of $\Aut(H(k,q))$ that fixes $J^{(1)}$ setwise such that 
\begin{equation}\label{alphaform}\alpha^\sigma=(\alpha(1),\ldots,\alpha(1))\end{equation} which consists of $2p$ repeated copies of $\alpha(1)$. 
As before, by replacing $\hat{C}$ if necessary, we can assume that $\alpha$ is as in \eqref{alphaform}.  Now, in this case 
\[\hat{S}=\{(\underbrace{t,\ldots,t}_{pq},\underbrace{t^\tau,\ldots,t^\tau}_{pq})\,:\,t\in T\}.\]
In order to give a nice description of the $\hat{S}$-orbit of $\alpha$, we again conjugate $\hat{C}$ by an element in 
the top group so that $\hat{S}$ looks slightly different.  Let \[\sigma'=\Pi_{i=1}^p\Pi_{j=1}^q((2i-1)q+j\,,\,(2i-1)pq+j).\]
and replace $\hat{C}$ by $\hat{C}^{\sigma'}$.  Note that $\alpha^{\sigma'}=\alpha$ and
\begin{equation}\label{sform}\hat{S}^{\sigma'}=\{x^{(\tau)}_t=(\underbrace{t,\ldots,t}_q,\underbrace{t^\tau,\ldots,t^\tau}_q,\ldots,
\underbrace{t,\ldots,t}_q,\underbrace{t^\tau,\ldots,t^\tau}_q)\,:\,t\in T\},\end{equation} so that after replacement we can assume that $\alpha$ 
is as in \eqref{alphaform} and that $\hat{S}$ has the form above.  We can identify $\alpha$ with the vertex $\rep_p(\alpha(1,1^\tau))$ where 
$\alpha(t,t^\tau)=(\alpha(t),\alpha(t^\tau))$ for any $t\in T$.  Once we've made this identification, it follows from a direct calculation similar to \eqref{calc1}
that for $t\in T$, \[\alpha^{x^{(\tau)}_t}=\rep_p(\alpha(1,1^\tau))^{x^{(\tau)}_t}=\rep_p(\alpha(t,t^{\tau})).\]
Hence we deduce that \[\hat{\Delta}=\alpha^{\hat{S}}=\Rep_p(C(T,T^\tau)).\] 

\subsection{Piecing the parts back together}\label{secpiece}

Let $\alpha\in C$ and \[\Delta=\alpha^{\soc(K)}.\] As $\soc(K)$ is equal to the direct product of the groups $D_i\in\mathcal{D}$, it follows
that we can identify $\Delta$ with the Cartesian product of the $D_i$-orbits on $\alpha$.  That is
\[\Delta=\alpha^{D_1}\times\ldots\times\alpha^{D_\ell}\]
Because each $D_i$ has support $J_i$, it follows that $D_i$ leaves $\alpha$ unchanged on the set of entries $M\backslash{J_i}$.  
So we can identify $\alpha^{D_i}$ with $\pi_{J_i}(\alpha^{\chi(D_i)})=\pi_{J_i}(\alpha)^{\chi(D_i)}$.  
(The idea here is that we are throwing away the part of $\alpha$ that is left unchanged when $D_i$ acts on it.)
We also note that $\chi(D_i)=\chi(\soc(K))$ when we project onto $J_i$.  
Hence, in each case, by replacing $C$ with an equivalent code if necessary, we can identify $\Delta$ with the Cartesian product of the orbit $\hat{\Delta}$ 
described in Section \ref{secdescribe} or Section \ref{secdescribe2}.

More specifically, suppose that $\case1$ holds with $\hat{\Delta}=\pi_J(C)=\Rep(k,q)$ for all $J\in\mathcal{J}$.  Then $\Delta$ is equal to 
the Cartesian product of $\ell$-copies of the repetition code.  This is just the product construction applied to $\Rep(k,q)$, as defined in 
Section \ref{secprodrep}, so 
\[\Delta=\Prod_{\ell}(\Rep(k,q)).\] Let $\beta\in C$.  As $\pi_J(\beta)\in\Rep(k,q)$ 
for all $J\in\mathcal{J}$, there exist $a_1,\ldots,a_\ell\in Q$ such
that $\beta=(a_1^k,\ldots,a_\ell^k)$.  In particular, $\beta\in\Delta$, so in this case 
\begin{equation}\label{des1}C=\Delta=\Prod_{\ell}(\Rep(k,q)).\end{equation}

Suppose now that $\case1$ holds such that for all $J\in\mathcal{J}$, $\pi_J(C)\subseteq\All(pq,q)$ and $k=pq$ for some positive integer $p$.  
Then there exists $\sigma\in\Pi_{i=1}^{\ell}\Sym(J_i)$ that centralises $\soc(K)$ such that $\alpha^\sigma=(\alpha(1),\ldots,\alpha(1))$. 
Hence, by replacing $C$ by $C^{\sigma}$ if necessary, it follows that $\Delta$ is the Cartesian product of $\hat{\Delta}=\Rep_p(C(T))$.  
This is just the product construction applied to $\Rep_p(C(T))$, that is, \begin{equation}\label{des2}\Delta=\Prod_{\ell}(\Rep_p(C(T))).\end{equation}  
If $\taucase$ holds then we choose $\sigma\in\Pi_{i=1}^{\ell}\Sym(J_i)$  so that $\alpha^\sigma=(\alpha(1),\ldots,\alpha(1))$ but also so that each
$\chi(D_i)$ is as in \eqref{sform}.  By replacing $C$ with $C^\sigma$, it follows 
that \begin{equation}\label{des3}\Delta=\Prod_\ell(\Rep_p(C(T,T^\tau))).\end{equation}

\section{Proof of Theorem \ref{main}}\label{secproofofthm}

Let $C$ be an alphabet almost simple $X$-neighbour transitive code with $\delta\geq 3$ and $K:=X\cap B\neq 1$ in $H(m,q)$.  
Then $X_1^Q$ is a $2$-transitive group of almost simple type.  Let
$T$ be the minimal normal subgroup of $X_1^Q$, which is non-abelian simple.  By replacing $C$ by an equivalent
code if necessary, it follows from Proposition \ref{subdir} that $\soc(K)$ is a subdirect product of $T^m$.
Thus, applying Scott's Lemma \cite{scotts}, there exists a partition 
$\mathcal{J}=\{J_1,\ldots,J_\ell\}$ of $M$ such
that \[\soc(K)=D_1\times\ldots\times D_\ell\] where each $D_i$ is  
a full diagonal subgroup of $\Pi_{j\in J_i}T$, and by Lemma \ref{ninva}, $\mathcal{J}$ is an $X$-invariant partition of $M$.  
By again replacing $C$ if necessary, it follows from Lemma \ref{form} that all the subgroups $D_i$ have the same Form, as in Definition \ref{defform}, and that either
$\case1$ or $\taucase$ holds, as in Definition \ref{case1case2}.  Moreover, if $\taucase$ holds, we deduce from Lemma \ref{ninvaonj} that there exists an
$X$-invariant partition $\mathcal{J}^{(\tau)}$ of $M$ which is a refinement of the partition $\mathcal{J}$.  

Now let $\alpha\in C$, $\Delta=\alpha^{\soc(K)}$ and $k=m/\ell$.  We deduce from Section \ref{secpiece} that, up to equivalence, either 
\begin{equation*}\Delta=\left\{\begin{array}{ll}     
\Prod_\ell(\Rep(k,q))&\textnormal{if $\case1$ and Proposition \ref{applyproj}--(i) hold,}\\
\Prod_\ell(\Rep_p(C(T)))\textnormal{ with $k=pq$}&\textnormal{if $\case1$ and Proposition \ref{applyproj}--(ii) hold, or}\\
\Prod_\ell(\Rep_p(C(T,T^\tau)))\textnormal{ with $k=2pq$} &\textnormal{if $\taucase$ holds.}
 \end{array}\right.\end{equation*}
As $\Delta$ is a $\soc(K)$-orbit, and because $\soc(K)$ is a normal subgroup of $X$, $\Delta$ is a block of imprimitivity for the action of $X$ on $C$.  
Thus either $C=\Delta$ (which is necessarily true if $\case1$ and Proposition \ref{applyproj}--(i) hold), or
$C$ is the disjoint union of $X$-translates of $\Delta$.  Moreover, Lemma \ref{codeimprim} implies that $\Delta$ is neighbour transitive, which 
we claim is true in each case. In Example \ref{repex}, we saw that $\Rep(k,q)$ is neighbour transitive, and in \cite{diagnt}, the authors proved that 
for a subgroup $H\leq S_q$, the code $C(H)$ is neighbour transitive if and only if $N_{S_q}(H)$ is $2$-transitive.  
Moreover, in joint work with Spiga \cite{twisted}, the authors proved that for each $T$ in Table \ref{table1}
and outer automorphism $\tau$ with order $2$, the code $C(T,T^\tau)$ is neighbour transitive.  
Thus it follows from Lemma \ref{prodconstnt} and \cite[Lemma 5]{diagnt} that in each case, $\Delta$ is indeed a neighbour transitive code, proving the claim.  
Finally, we observe that if $\case1$ and Proposition \ref{applyproj}--(ii) hold or $\taucase$ holds, 
then $\Delta$ is a frequency permutation array with each letter appearing 
$\ell p$ or $2\ell p$ times respectively.

\section{Another Example}\label{secexas}

In this section we demonstrate that for some of the codes $C$ in Theorem \ref{main}, the projected codes $\pi_J(C)$ may have
minimum distance smaller than that of $C$, and indeed, may have minimum distance $2$.
We give an example of an alphabet almost simple $X$-neighbour
transitive code with $\delta=3$ and $X$-invariant partition $\mathcal{J}$ of $M$
such that $\pi_J(C)$ has minimum distance $\delta(\pi_J(C))=2$ for each $J\in\mathcal{J}$.  

\begin{example} Let $Q=\{1,\ldots,q\}$ for some $q\geq 5$ and define 
\[C=\{(\alpha(t_1),\ldots,\alpha(t_\ell))\in H(\ell q, q)\,:\,t_it_j^{-1}\in A_q\,\,\forall i,j\}.\] 
Let $R=\Diag_q(S_q)\rtimes S_q\leq\Aut(H(q,q))$ and
\[X=\{(x_{h_1}\sigma_1,\ldots,x_{h_\ell}\sigma_\ell)\sigma\in R\wr
  S_\ell \,:\,h_ih_j^{-1}\in A_q, \sigma_i\sigma_j^{-1}\in
  A_q\textnormal{ for all $i,j$}\}.\] We claim that $C$ is $X$-neighbour transitive.  

  \begin{proof}  Let $\beta=(\alpha(t_1),\ldots,\alpha(t_\ell))\in C$ and
  $x=(x_{h_1}\sigma_1,\ldots,x_{h_\ell}\sigma_\ell)\sigma\in X$. 
  It follows from \eqref{prodact} and \cite[Lemma 8]{diagnt} that
\begin{eqnarray*}\beta^x&=&(\alpha(t_1)^{x_{h_1}\sigma_1},\ldots,\alpha(t_\ell)^{x_{h_\ell}\sigma_\ell})^\sigma\\ 
&=&(\alpha(\sigma_1^{-1}t_1h_1),\ldots,\alpha(\sigma_\ell^{-1}t_\ell h_\ell))^\sigma\\
&=&(\alpha(\sigma_{1{\sigma^{-1}}}^{-1}t_{1{\sigma^{-1}}}h_{1{\sigma^{-1}}}),\ldots
,\alpha(\sigma_{\ell{\sigma^{-1}}}^{-1}t_{\ell{\sigma^{-1}}}h_{\ell{\sigma^{-1}}})).\end{eqnarray*}
  From the definition of $C$ and $X$, we deduce that for all $i,j$, 
  \[\sigma_i^{-1}t_ih_i(\sigma_j^{-1}t_jh_j)^{-1}=\sigma_i^{-1}t_ih_ih_j^{-1}t_i^{-1}t_it_j^{-1}\sigma_i\sigma_i^{-1}\sigma_j\in
  A_q.\] Therefore $X\leq\Aut(C)$.  Now let
  \[\alpha=(\alpha(1),\ldots,\alpha(1))\in C.\] Then
  $y=(x_{t_1},\ldots,x_{t_\ell})\in X$ and $\alpha^y=\beta$.  Since
  $\beta$ was arbitrarily chosen, it follows that $X$ acts
  transitively on $C$.  

  To prove that $X$ acts transitively on the neighbour set of $C$, we first describe the neighbours of $\alpha$.  
  The neighbours of $\alpha(1)$ in $H(q,q)$ are
  \[\Gamma_1(\alpha(1))=\{\nu(\alpha(1),a,b)\,:\,\textnormal{$a,b\in
    Q$ and $a\neq b$}\}.\]  Thus following the notation of Section \ref{secprodrep}, the neighbours of $\alpha$ in $H(\ell q,q)$ are
  \[\Gamma_1(\alpha)=\{\gamma(\alpha,i,\nu(\alpha(1),a,b))\,:\,i\in
  \{1,\ldots,\ell\},\textnormal{ $a,b\in Q$ and $a\neq b$}\}.\]
  Consider the group \[W=\{(x_{h_1}\sigma_1,\ldots,x_{h_\ell}\sigma_\ell)\sigma\in 
  X\,:\,h_i=\sigma_i\,\,\forall i\}.\]  Then $W\leq X_\alpha$.  Let
  \[\nu_1=\gamma(\alpha,i,\nu(\alpha(1),a,b))\textnormal{ , }\nu_2=\gamma(\alpha,j,\nu(\alpha(1),s,u))\] be two neighbours of
  $\alpha$.  Since $A_q$ acts $2$-transitively on $Q$, there exists
  $h\in A_q$ such that $a^h=s$ and $b^h=u$.  Let
  $x=(x_h\sigma,\ldots,x_h\sigma)\in W$.  Since $\sigma=h$, 
  we deduce from Lemma \ref{neiact2} and \eqref{prodact} that
  $\nu_1^x=\gamma(\alpha,i,\nu(\alpha(1),s,u))$.  It follows that there
  exists $\sigma'\in S_\ell$ such that $i^{\sigma'}=j$ and so
  $\nu_1^{x\sigma'}=\nu_2$.  Thus $X_\alpha$ acts transitively on
  $\Gamma_1(\alpha)$, and so, because $X$ acts transitively on $C$, we deduce that $X$ acts transitively
  on the neighbour set of $C$.  Hence $C$ is $X$-neighbour transitive.       
 \end{proof}

  Since $C\subseteq\Prod_\ell (C(S_q))$ and $\delta(C(S_q))=2$, it follows
  that $C$ has minimum distance $\delta\geq 2$.  If $\delta=2$, then because
  $X$ acts transitively on $C$, there exists $\beta\in C$ such that $d(\alpha,\beta)=2$.  However, this holds 
  if and only if $\beta=(\alpha(1),\ldots,\alpha(t_i),\ldots,\alpha(1))$ for some
  transposition $t_i\in S_q$, and such a vertex is not a codeword.  Thus $\delta\geq 3$.  Let $t_1$ be a
  $3$-cycle in $A_q$ and
  $\beta=(\alpha(t_1),\alpha(1),\ldots,\alpha(1))\in C$.  Then 
  $d(\alpha,\beta)=3$.    Hence $C$ has minimum distance $\delta=3$.
  Now, it is clear that $\mathcal{J}=\{J_1,\ldots,J_\ell\}$, where $J_i=\{a+(i-1)q\,:\,a\in 
  Q\}$, is an $X$-invariant partition of $M$.  Because
  $(\alpha(t),\ldots,\alpha(t))\in C$ for all $t\in S_q$, we have that
  $\pi_J(C)=C(S_q)$ for all $J\in\mathcal{J}$.  Thus 
  $C$ is an example of an $X$-neighbour transitive code with minimum distance $3$ and $X$-invariant partition $\mathcal{J}$ such that
  $\pi_J(C)$ has minimum distance $2$ for each $J\in\mathcal{J}.$

  We observe that $K=X\cap B=\{(x_{h_1},\ldots,x_{h_\ell})\,:\,h_ih_j^{-1}\in A_q\}$.
  Also, $(x_h,\ldots,x_h)\in   
  K$ for all $h\in S_q$ and so $X_1^Q=S_q$.  Thus $C$ and $X$ satisfy the conditions of Theorem \ref{main}.  Now
  let $G=\Pi_{i=1}^\ell \Diag_q(S_q)$.  By following arguments that are similar to those used in the proof of Proposition \ref{subdir}, we 
  deduce that $\soc(K)$ is a subgroup of $\soc(G)=\Pi_{i=1}^\ell \Diag_q(A_q)$.  
  Moreover, for each $i$, $T_i=\{(1,\ldots,x_t,\ldots,1)\,:\,x_t\in
  \Diag_q(A_q)\}$ is a minimal normal subgroup of $K$,  so $T_i\leq\soc(K)$. 
  Consequently $\soc(K)=\soc(G)$.  It follows
  that \[\Delta=\alpha^{\soc(K)}=\{(\alpha(h_1),\ldots,\alpha(h_\ell))\,:\,h_i\in
  A_q\}=\Prod_\ell(C(A_q)),\] which is a proper subset of $C$.  Now, if $\beta=(\alpha(t_1),\ldots,\alpha(t_\ell))\in C$, then $t_i\in A_q$ for some $i$
  if and only if $t_j\in A_q$ for all $j$.  From this we deduce that
 \[C=\dot\bigcup_{t\in\mathcal{T}}\Prod_\ell(C(A_qt))=\dot\bigcup_{t\in\mathcal{T}}\Delta^{(x_t,\ldots,x_t)}\] where $\mathcal{T}$ is 
 a transversal for $A_q$ in $S_q$.
\end{example}

\section{Acknowledgements}

This research for the first author was supported by an Australian Postgraduate Award and by 
a grant from the University of Western Australia associated with 
the Australian Research Council Federation Fellowship FF0776186 of the second author.


\end{document}